\title{Exact Completion of Path Categories and Algebraic Set Theory \\[1ex]
  \footnotesize\mdseries Part I: Exact Completion of Path Categories}
\author{Benno van den Berg$^1$}
\address{${}^1$ Institute for Logic, Language and Computation (ILLC), University of Amsterdam, P.O. Box 94242, 1090 GE Amsterdam, the Netherlands. E-mail: bennovdberg@gmail.com.}
\author{Ieke Moerdijk$^2$}
\address{${}^2$ Mathematical Institute, Utrecht University, P.O. Box 80010, 3508 TA Utrecht, the Netherlands. E-mail: I.Moerdijk@uu.nl.}
\date{\today}
\begin{document}

\begin{abstract}
We introduce the notion of a ``category with path objects'', as a slight strengthening of Kenneth Brown's classical notion of a ``category of fibrant objects''. We develop the basic properties of such a category and its associated homotopy category. Subsequently, we show how the exact completion of this homotopy category can be obtained as the homotopy category associated to a larger category with path objects, obtained by freely adjoining certain homotopy quotients. In a second part of this paper, we will present an application to  models of constructive set theory. Although our work is partly motivated by recent developments in homotopy type theory, this paper is written purely in the language of homotopy theory and category theory, and we do not presuppose any familiarity with type theory on the side of the reader.
\end{abstract}

\maketitle

\section{Introduction}

The phrase ``path category'' in the title is short for ``category with path objects'' and refers to a modification of Kenneth Brown's notion of a category of fibrant objects \cite{brown73}, originally meant to axiomatise the homotopical properties of the category of simplicial sheaves on a topological space. Like categories of fibrant objects, path categories are categories equipped with classes of fibrations and weak equivalences, and as such they are closely related to Quillen's model categories which have an additional class of cofibrations \cite{quillen67,quillen69,hovey99}. Our modification of Brown's definition mainly consists in an additional axiom which in the language of Quillen model categories would amount to the condition that every object is cofibrant. One justification for this modification is that there still are plenty of examples. One source of examples is provided by taking the fibrant objects in a model category in which all objects are cofibrant, such as the category of simplicial sets, or the categories of simplicial sheaves equipped with the injective model structure. More generally, many model categories have the property that objects over a cofibrant object are automatically cofibrant. For example, this holds for familiar model category structures for simplicial sets with the action of a fixed group, for dendroidal sets, and for many more. In such a model category, the fibrations and weak equivalences between objects which are both fibrant and cofibrant satisfy our modification of Brown's axioms.

Another justification, and in fact our main motivation, for this modification of Brown's axioms is that these modified axioms are satisfied by the syntactic category constructed out of a type theory \cite{avigadetal15,gambinogarner08}. Thus, our work builds on the recently discovered interpretation of Martin-L\"of type theory in Quillen model categories \cite{awodeywarren09}. This interpretation has been extended by Voevodsky to an interpretation of the Calculus of Constructions in the category of simplicial sets \cite{kapulkinetal12} (see also \cite{bergmoerdijk15,cisinski14,gepnerkock17,shulman15a,shulman15b}).

In addition, our work is relevant for constructive set theory. Aczel has provided an interpretation of the language of set theory in a type theory with a suitable universe \cite{aczel78}, and the question arises whether it is possible to construct models of set theory out of certain path categories. We will turn to this question in Part II of this paper.

The precise contents of this paper are as follows. In Section 2 we introduce the notion of a path category and verify that many familiar constructions from homotopy theory can be performed in such path categories and retain their expected properties. It is necessary for what follows to perform this verification, but there is very little originality in it. An exception is perhaps formed by our construction of suitable path objects carrying a connection structure as in \reftheo{existenceconnections} and our statement concerning the existence of diagonal fillers which are half strict, half up-to-homotopy, as in \reftheo{allgood} below. We single out these two properties here also because they play an important r\^ole in later parts of the paper.

In Section 3 we will introduce a notion of ``homotopy exact completion'' for such path categories, a new category obtained by freely adjoining certain homotopy quotients. For ``trivial'' path categories in which every map is a fibration and only isomorphisms are weak equivalences this notion of homotopy exact completion coincides with the ordinary notion of exact completion, well known from category theory (see \cite{carboni95,carboniceliamagno82,carbonivitale98}). In case the path category is obtained from the syntax of type theory this coincides with what is known as the setoids construction (see \cite{bartheetal03}). Indeed, the type-theorist can think of our work as a categorical analysis of this construction informed by the homotopy-theoretic interpretation of type theory. The main result in Section 3 shows that the exact completion of the homotopy category of a path category \ct{C} is itself a homotopy category of another path category which we call ${\rm Ex}(\ct{C})$, see \refprop{conntoordintheory} and \reftheo{hexcatoffibrantobj} below.

In Section 4 we show that if \ct{C} has homotopy sums which are, in a suitable sense, stable and disjoint, then the homotopy exact completion is a pretopos (see \reftheo{hsumsunderhex}). We will also show that the homotopy exact completion has a natural numbers object if \ct{C} has what we will call a homotopy natural numbers object.

Finally, in Section 5 we will show that the homotopy exact completion improves the properties of the original category in that it will satisfy certain extensionality principles even when the original category does not. This is analogous to what happens for ordinary exact completions: the ordinary exact completion $\ct{C}'$ of a category \ct{C} will be locally cartesian closed (that is, will have internal homs in every slice) whenever \ct{C} has this property in a weak form, where weak is meant to indicate that one weakens the usual universal property of the internal hom by dropping the uniqueness requirement, only keeping existence (see \cite{carbonirosolini00}). In the same vein we show in Section 5 that if a path category has weak homotopy $\Pi$-types (i.e. weak fibrewise up-to-homotopy internal homs) then its exact completion has exponentials in every slice. In type-theoretic terms this means that the homotopy exact completion will always satisfy a form of function extensionality; something similar holds for the path category ${\rm Ex}(\ct{C})$.

At this point it is probably good to add a few words about our approach and how it relates to some of the work that is currently being done at the interface of type theory and homotopy theory. First of all, we take a resolutely categorical approach; in particular, no knowledge of the syntax of type theory is required to understand this paper. As a result, we expect our paper to be readable by homotopy theorists.

Moreover, despite being inspired by homotopy type theory, the additions to Martin-L\"of type theory suggested by its homotopy-theoretic interpretation play no r\^ole in this paper. In particular, we will not use univalence, higher-inductive types or even function extensionality. Indeed, all the definitions and theorems have been formulated in such a way that they will apply to the syntactic category of (pure, intensional) Martin-L\"of type theory. In fact, we expect our definitions remain applicable to the syntactic category of type theory even when all its computation rules are formulated as propositional equalities. One of the authors of this paper has verified this in detail for the identity types (see \cite{vandenberg16}), but we firmly believe that it applies to all type constructors. This idea has guided us in setting up many of the definitions of this paper. This includes, for example, the definition of a (weak) homotopy $\Pi$-type as in \refdefi{hPitypes} below.

In addition to the reasons already mentioned above, these considerations have determined our choice to work in the setting of path categories. As said, our path categories are related to categories of fibrant objects \emph{\`a la} Brown, or fibration categories as they have been called by other authors. Structures similar to fibration categories or their duals have been studied by Baues \cite{baues89} and Waldhausen \cite{waldhausen85}, for homotopy-theoretic purposes. For a survey and many basic properties, we refer to \cite{radulescubanu09}.

More recently, several authors have also considered such axiomatisations in order to investigate the relation between homotopy theory and type theory. For instance, Joyal (unpublished) and Shulman \cite{shulman15b} have considered axiomatisations in terms of a weak factorisation system for fibrations and acyclic cofibrations, a set-up which is somewhat stronger than ours. In our setting we do not have such a weak factorisation system, and the lifting properties that we derive in our path categories yield diagonals that make lower triangles strictly commutative, while upper triangles need only commute up to (fibrewise) homotopy. Our reasons for deviating from Joyal and Shulman are that in the setting of the weak rules for the identity types such weak liftings seem to be the best possible; in addition, the category ${\rm Ex}(\ct{C})$ only seems to be a path category in our sense, even when \ct{C} is a type-theoretic fibration category in the sense of Shulman.

In this paper we have not entered into any $\infty$-categorical aspects. For readers interested in the use of fibration categories in $\infty$-category theory and its relation to type theory, we refer to the work of Kapulkin and Szumi\l{}o \cite{kapulkin15,kapulkinszumilo17,szumilo14}.

\subsection{Acknowledgments} The writing of this paper took place in various stages, and versions of the results that we describe here were presented at various occasions. We are grateful to the organisers of the Homotopy Type Theory Workshop in Oxford in 2014, TACL 2015 in Salerno and the minisymposium on Homotopy Type Theory and Univalent Foundations at the Jahrestagung der DMV 2015 in Hamburg for giving us the opportunity to present earlier versions of parts of this paper. We owe a special debt to the Newton Institute for Mathematical Sciences in Cambridge and the Max Planck Institute in Bonn. The first author was a visiting fellow at the Newton Institute in the programme ``Mathematical, Foundational and Computational Aspects of the Higher Infinite (HIF)'' in Fall 2015, while both authors participated in the ``Program on Higher Structures in Geometry and Physics'' at the Max Planck Institute in 2016.  At both institutes various parts of this paper were written and presented. We would also like to thank Chris Kapulkin for useful bibliographic advice and the referees for a careful reading of the manuscript. Finally, we are grateful to Peter Lumsdaine and one of the referees for pointing out an error in an earlier version of this paper. 

\section{Path categories}

\subsection{Axioms}
Throughout this paper we work with path categories, a modification of Brown's notion of a category of fibrant objects \cite{brown73}. We will start by recalling Brown's definition.

The basic structure is that of a category \ct{C} together with two classes of maps in \ct{C} called the \emph{weak equivalences} and the \emph{fibrations}, respectively. Morphisms which belong to both classes of maps will be called \emph{acylic fibrations}. A \emph{path object} on an object $B$ is a factorisation of the diagonal $\Delta_B: B \to B \times B$ as a weak equivalence $r: B \to PB$ followed by a fibration $(s, t): PB \to B \times B$.

\begin{defi}{catfibrobj} \cite{brown73} The category \ct{C} is called a \emph{category of fibrant objects} if the following axioms are satisfied:
\begin{enumerate}
\item[(1)] Fibrations are closed under composition.
\item[(2)] The pullback of a fibration along any other map exists and is again a fibration.
\item[(3)] The pullback of an acylic fibration along any other map is again an acyclic fibration.
\item[(4$'$)] Weak equivalences satisfy 2-out-of-3: if $gf = h$ and two of $f, g, h$ are weak equivalences then so is the third.
\item[(5$'$)] Isomorphisms are acyclic fibrations.
\item[(6)] For any object $B$ there is a path object $PB$ (not necessarily functorial in $B$).
\item[(7)] \ct{C} has a terminal object $1$ and every map $X \to 1$ to the terminal object is a fibration.
\end{enumerate}
\end{defi}

We make two modifications to Brown's definition, the first of which is relatively minor. Instead of the more familiar 2-out-of-3 property we demand that the weak equivalences satisfy 2-out-of-6:
\begin{enumerate}
\item[(4)] Weak equivalences satisfy 2-out-of-6: if $f: A \to B$, $g: B \to C$, $h: C \to D$ are three composable maps and both $gf$ and $hg$ are weak equivalences, then so are $f,g,h$ and $hgf$.
\end{enumerate}
It is not hard to see that this implies 2-out-of-3.  We have decided to stick with the stronger property, as it is something which is both useful and true in all the examples we are interested in. (See also \refrema{on2outof6} below.)

A more substantial change is that we will add an axiom saying that every acyclic fibration has a section (this is sometimes expressed by saying that ``every object is cofibrant''). To be precise, we will modify (5$'$) to:
\begin{enumerate}
\item[(5)] Isomorphisms are acyclic fibrations and every acyclic fibration has a section.
\end{enumerate}
As discussed in the introduction, one reason we have made this change is that it is satisfied in the syntactic category associated to type theory \cite{avigadetal15} and in many situations occurring in homotopy theory. In fact, axiom (5) will be used throughout this paper and in this section we will investigate, somewhat systematically, the consequences of this axiom.

To summarise:
\begin{defi}{pathcat} The category \ct{C} will be called a \emph{category with path objects}, or a \emph{path category} for short, if the following axioms are satisfied:
\begin{enumerate}
\item[(1)] Fibrations are closed under composition.
\item[(2)] The pullback of a fibration along any other map exists and is again a fibration.
\item[(3)] The pullback of an acylic fibration along any other map is again an acyclic fibration.
\item[(4)] Weak equivalences satisfy 2-out-of-6: if $f: A \to B$, $g: B \to C$, $h: C \to D$ are three composable maps and both $gf$ and $hg$ are weak equivalences, then so are $f,g,h$ and $hgf$.
\item[(5)] Isomorphisms are acyclic fibrations and every acyclic fibration has a section.
\item[(6)] For any object $B$ there is a path object $PB$ (not necessarily functorial in $B$).
\item[(7)] \ct{C} has a terminal object $1$ and every map $X \to 1$ to the terminal object is a fibration.
\end{enumerate}
\end{defi}
We have chosen the name path category because its homotopy category is completely determined by the path objects (as every object is ``cofibrant'').

Examples are:
\begin{enumerate}
\item The syntactic category associated to type theory \cite{avigadetal15}. In fact, to prove that the syntactic category is an example, it suffices to assume that the computation rule for the identity type holds only in a propositional form (see \cite{vandenberg16}).
\item Let \ct{M} be a Quillen model category. If every object is cofibrant in \ct{M}, then the full subcategory of fibrant objects in \ct{M} is a path category in our sense. More generally, if any object over a cofibrant object is also cofibrant, then the full subcategory of fibrant-cofibrant objects in \ct{M} is a path category.
\item In addition, there is the following trivial example: if \ct{C} is a category with finite limits, it can be considered as a path category in which every morphism is a fibration and only the isomorphisms are weak equivalences. By considering this trivial situation, it can be seen that our theory of the homotopy exact completion in the next section generalises the classical theory of exact completions of categories with finite limits.
\end{enumerate}

\subsection{Basic properties} We start off by making some basic observations about path categories, all of which are due to Brown in the context of categories of fibrant objects (\cite{brown73}; see also \cite{radulescubanu09}). First of all, note that the underlying category \ct{C} has finite products and all projection maps are fibrations. From this it follows that if $(f, g): P \to X \times X$ is a fibration, then so are $f$ and $g$.

\begin{prop}{factlemma}
In a path category any map $f: Y \to X$ factors as $f = p_fw_f$ where $p_f$ is a fibration and $w_f$ is a section of an acylic fibration (and hence a weak equivalence).
\end{prop}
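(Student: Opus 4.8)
The plan is to factor $f$ through its graph, mimicking the classical mapping-path-space construction but using the path object on $X$. Given $f : Y \to X$, I would first form the path object $(s,t) : PX \to X \times X$ with its weak equivalence $r : X \to PX$. Then I would pull back the fibration $s : PX \to X$ (which is a fibration because $(s,t)$ is a fibration, hence its composite with the projection is, and more directly $s$ is a fibration by the remark preceding the proposition) along $f : Y \to X$, obtaining an object $P_f$ with a fibration $q : P_f \to Y$ and a map $u : P_f \to PX$ such that $s u = f q$. Define $p_f : P_f \to X$ to be the composite $t u : P_f \to X$; this is a fibration, being the composite of $u$ (which, since $P_f$ is a pullback of $s$ along $f$ and $(s,t)$ is a fibration, sits in a position where $u$ composed appropriately is a fibration — more carefully, $(q, tu) : P_f \to Y \times X$ is a fibration as a pullback of $(s,t)$, so $tu = p_f$ is a fibration) with a fibration.

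Next I would construct $w_f : Y \to P_f$. Using $r : X \to PX$ and $f : Y \to X$, the pair $(f, rf)$ satisfies $s(rf) = f = f \cdot \mathrm{id}_Y$, so by the universal property of the pullback there is a unique $w_f : Y \to P_f$ with $q w_f = \mathrm{id}_Y$ and $u w_f = r f$. Then $p_f w_f = t u w_f = t r f = f$, since $(s,t) r = \Delta_X$ gives $tr = \mathrm{id}_X$. So $f = p_f w_f$ with $p_f$ a fibration. It remains to exhibit $w_f$ as a section of an acyclic fibration: the map $q : P_f \to Y$ is already seen to satisfy $q w_f = \mathrm{id}_Y$, so I would show $q$ is an acyclic fibration. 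It is a fibration by construction (pullback of $s$); it is a weak equivalence because it is a pullback of $s$ along $f$, and $s$ is a weak equivalence — indeed $s r = \mathrm{id}_X$ exhibits $\mathrm{id}_X = s r$, and $r$ is a weak equivalence, so by 2-out-of-3 (a consequence of 2-out-of-6, axiom (4)) $s$ is a weak equivalence; but pullbacks of weak equivalences need not be weak equivalences in general, so here I must instead use axiom (3): $s$ is an \emph{acyclic} fibration, hence its pullback $q$ is an acyclic fibration, and therefore $w_f$, being a section of $q$, is a weak equivalence by 2-out-of-3.

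The one subtlety I would be careful about is the claim that $s : PX \to X$ is an acyclic fibration. We know $(s,t) : PX \to X \times X$ is a fibration and $r : X \to PX$ is a weak equivalence with $(s,t) r = \Delta_X$. Composing with the first projection $\pi_1 : X \times X \to X$, which is a fibration, gives $s = \pi_1 (s,t)$ a fibration, and $\pi_1 \Delta_X = \mathrm{id}_X$, so $s r = \mathrm{id}_X$; since $r$ is a weak equivalence and $sr = \mathrm{id}_X$ is a weak equivalence, 2-out-of-3 forces $s$ to be a weak equivalence. Hence $s$ is an acyclic fibration, and then axiom (3) does the rest. This is the only place where the argument uses anything beyond the definition of a path object and the closure/pullback axioms, and it is the step I would present most carefully. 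Everything else is a routine diagram chase with the pullback square.
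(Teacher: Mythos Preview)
Your proof is correct and follows exactly the same construction as the paper: form $P_f$ as the pullback of the acyclic fibration $s\colon PX\to X$ along $f$, set $w_f=(1,rf)$ and $p_f=t\circ(\text{second projection})$, and argue that $(q,tu)\colon P_f\to Y\times X$ is a fibration as the pullback of $(s,t)$ along $f\times 1$, so that $p_f$ is a fibration. Your care in verifying that $s$ is an acyclic fibration (via $sr=1$ and 2-out-of-3) before invoking axiom~(3) matches the paper's argument precisely.
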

\begin{proof}
This is proved on page 421 of \cite{brown73}. Since the factorisation will be important in what follows, we include the details here. First observe that if $PX$ is a path object for $X$ with weak equivalence $r: X \to PX$ and fibration $(s, t): PX \to X \times X$, then it follows from 2-out-of-3 for weak equivalences and $sr = tr = 1$ that both $s, t: PX \to X$ are acyclic fibrations. So for any map $f: Y \to X$ the following pullback
\diag{ P_f \ar[r]^{p_2} \ar[d]_{p_1} & PX \ar[d]^s \\
Y \ar[r]_f & X,}
exists with $p_1$ being an acyclic fibration. We set $w_f := (1, rf): Y \to P_f$ and $p_f := tp_2: P_f \to X$. Then $p_f w_f = f$ and $w_f$ is a section of $p_1$. Moreover, the following square
\diag{ P_f \ar[r]^{p_2} \ar[d]_{(p_1, p_f)} & PX \ar[d]^{(s, t)} \\
Y \times X \ar[r]_{f \times 1} & X \times X.}
is a pullback, so $(p_1,  p_f)$ is fibration, which implies that $p_f$ is a fibration as well.
\end{proof}

\begin{coro}{makingweeasy} Any weak equivalence $f: Y \to X$ factors as $f = p_fw_f$ where $p_f$ is an acylic fibration and $w_f$ is a section of an acyclic fibration.
\end{coro}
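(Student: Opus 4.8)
The plan is to obtain this corollary as an immediate refinement of \refprop{factlemma}, with no new construction needed. First I would apply that proposition to the weak equivalence $f: Y \to X$, yielding a factorisation $f = p_f w_f$ in which $p_f$ is a fibration and $w_f$ is a section of an acyclic fibration. As recorded in the proof of \refprop{factlemma}, $w_f = (1, rf)$ is a section of the acyclic fibration $p_1: P_f \to Y$, and since $p_1 w_f = 1_Y$ is a weak equivalence and $p_1$ is a weak equivalence, two-out-of-three for weak equivalences (a consequence of axiom (4)) shows $w_f$ itself is a weak equivalence.

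The point is then that the fibration $p_f$ is automatically acyclic: from $f = p_f w_f$ with both $f$ and $w_f$ weak equivalences, two-out-of-three forces $p_f$ to be a weak equivalence, so $p_f$ is an acyclic fibration, while $w_f$ remains a section of an acyclic fibration. I do not expect any genuine obstacle here; the only mild point of care is that the paper takes 2-out-of-6 (axiom (4)) rather than 2-out-of-3 as primitive, but it has already been noted that (4) implies (4$'$), so the argument goes through verbatim. In the writeup I would simply present the corollary as a one-line deduction from \refprop{factlemma} together with 2-out-of-3.
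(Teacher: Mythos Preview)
Your proposal is correct and matches the paper's approach exactly: the corollary is stated without proof in the paper, being an immediate consequence of \refprop{factlemma} together with 2-out-of-3 applied to $f = p_f w_f$ to deduce that the fibration $p_f$ is acyclic. Your write-up is precisely this one-line deduction.
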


\begin{defi}{slicingforcofo} If \ct{C} is a path category and $A$ is any object in \ct{C} we can define a new path category $\ct{C}(A)$, as follows: its underlying category is the full subcategory of $\ct{C}/A$ whose objects are the fibrations with codomain $A$. This means that its objects are fibrations $X \to A$, while a morphism from $q: Y \to A$ to $p: X \to A$ is a map $f: Y \to X$ in \ct{C} such that $pf = q$; such a map $f$ is a fibration or a weak equivalence in $\ct{C}(A)$ precisely when it is a fibration or a weak equivalence in \ct{C}.
\end{defi}

Clearly, $\ct{C}(1) \cong \ct{C}$. Observe that for any $f: B \to A$ there is a pullback functor $f^*: \ct{C}(A) \to \ct{C}(B)$, since pullbacks of fibrations always exist and are again fibrations.

\begin{prop}{presbypb} For any morphism $f: B \to A$ the functor $f^*: \ct{C}(A) \to \ct{C}(B)$ preserves both fibrations and weak equivalences.
\end{prop}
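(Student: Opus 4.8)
The plan is to prove the two halves separately. That $f^*$ preserves fibrations is immediate from the pullback axiom (2): if $p\colon X \to A$ is a fibration in $\ct{C}(A)$, meaning it is a fibration in $\ct{C}$, then its pullback $f^*p\colon B\times_A X \to B$ along $f$ is again a fibration in $\ct{C}$ by (2), hence a fibration in $\ct{C}(B)$. There is nothing more to say here; this is essentially the observation already made just before \refdefi{slicingforcofo} that pullback functors between these slices exist.

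The substantive part is preservation of weak equivalences, and here the idea is to reduce an arbitrary weak equivalence to the two building blocks supplied by \refcoro{makingweeasy}: every weak equivalence $w$ between fibrant objects over $A$ factors as $w = p\circ s$ with $p$ an acyclic fibration and $s$ a section of an acyclic fibration. Since $f^*$ is a functor it preserves this composite, so by 2-out-of-3 (a consequence of (4)) it suffices to show that $f^*$ preserves acyclic fibrations and sections of acyclic fibrations. Preservation of acyclic fibrations is exactly axiom (3). For a section $s$ of an acyclic fibration $q$ (so $qs = 1$, and $q$ acyclic): applying $f^*$ yields $f^*(q)\circ f^*(s) = f^*(qs) = 1$, so $f^*(s)$ is a section of $f^*(q)$; and $f^*(q)$ is an acyclic fibration by axiom (3). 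Thus $f^*(s)$ is again a section of an acyclic fibration, in particular a weak equivalence by (5) together with 2-out-of-3 (the section of an acyclic fibration is a weak equivalence since its composite with that acyclic fibration is the identity). Hence $f^*(w) = f^*(p)\circ f^*(s)$ is a composite of two weak equivalences, so a weak equivalence by 2-out-of-3.

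One point deserves care: \refcoro{makingweeasy} and \refprop{factlemma} are stated for maps in $\ct{C}$, but we need the factorisation to live in $\ct{C}(A)$, i.e. all objects appearing must be fibrant over $A$ and all maps must commute with the structure maps to $A$. Inspecting the construction in \refprop{factlemma}, the intermediate object $P_f$ is built as a pullback of $s\colon PX \to X$ against $f\colon Y\to X$; when $f$ is a map over $A$ between fibrations $X\to A$ and $Y\to A$, the composite $P_f \to Y \to A$ is a fibration (composite of fibrations, using (1)), so $P_f$ is an object of $\ct{C}(A)$, and one checks directly that $w_f$ and $p_f$ are morphisms over $A$. So the factorisation can be taken inside $\ct{C}(A)$, which is what the argument above needs. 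I expect this bookkeeping — verifying that the Brown factorisation descends to the slice — to be the only mild obstacle; everything else is a direct appeal to axioms (2), (3), (5) and 2-out-of-3.
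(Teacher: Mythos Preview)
Your argument is exactly Brown's Lemma, which is precisely what the paper invokes: factor the weak equivalence via \refcoro{makingweeasy}, use axiom (3) for acyclic fibrations, and conclude by 2-out-of-3. So the approach is correct and matches the paper.

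One small correction to your final paragraph: the check that $p_f$ is a morphism over $A$ does \emph{not} go through if you use an arbitrary path object $PX$ taken in $\ct{C}$. With your structure map $P_f \to Y \to A$ via $p_1$, the map $p_f = t p_2$ lies over $A$ only if $(X\to A)\circ t = (X\to A)\circ s$ as maps $PX \to A$, which fails for a generic $PX$. The clean fix is simply to observe that $\ct{C}(A)$ is itself a path category (as asserted in \refdefi{slicingforcofo}), so \refprop{factlemma} and \refcoro{makingweeasy} apply there verbatim, using a path object $P_A X$ for $X$ computed in $\ct{C}(A)$; then everything lives over $A$ automatically and no separate inspection is needed.
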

\begin{proof}
This is proved on page 428 of \cite{brown73} and the proof method is often called Brown's Lemma. The idea is that Axiom 3 for path categories tells us that $f^*$ preserves acyclic fibrations. But then it follows from the previous corollary and 2-out-of-3 for weak equivalences that $f^*$ preserves weak equivalences as well.
\end{proof}

This proposition can be used to derive:
\begin{prop}{basechangeforwe}
The pullback of a weak equivalence $w: A' \to A$ along a fibration $p: B \to A$ is again a weak equivalence.
\end{prop}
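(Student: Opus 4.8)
The plan is to use Brown's factorisation \refcoro{makingweeasy} to split the problem into two cases: the pullback of an acyclic fibration, which is immediate from axiom (3), and the pullback of a section of an acyclic fibration, which carries the real content. First I would set up notation: let $v\colon B'\to B$ be the pullback of $w$ along $p$ and $u\colon B'\to A'$ the pullback of $p$ along $w$; by axiom (2) the map $u$ is a fibration, and we must show $v$ is a weak equivalence. Factor $w=p_w w_w$ by \refcoro{makingweeasy}, with $p_w\colon P_w\to A$ an acyclic fibration and $w_w\colon A'\to P_w$ a section of an acyclic fibration $\sigma\colon P_w\to A'$. Pulling this factorisation back along $p$ in two stages: pulling $p_w$ back along $p$ yields, by axiom (3), an acyclic fibration $\alpha\colon\bar P\to B$, and the other leg $\bar p\colon\bar P\to P_w$ is a fibration by axiom (2); pulling $w_w$ back along $\bar p$ then yields $\beta\colon B'\to\bar P$. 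By the pasting lemma for pullbacks $v=\alpha\beta$, so by 2-out-of-3 (using that $\alpha$ is a weak equivalence) it remains to prove that $\beta$, the pullback of the section $w_w$ along the fibration $\bar p$, is a weak equivalence.

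For this I would use that a section $i$ of an acyclic fibration $\sigma\colon P\to X$ exhibits $\sigma$ as a fibrewise homotopy equivalence over $X$. Working in the path category $\ct{C}(X)$ of \refdefi{slicingforcofo}, a path object for $(P,\sigma)$ factors the fibrewise diagonal as a weak equivalence followed by a fibration $(\partial_0,\partial_1)\colon P^X\to P\times_X P$; a short 2-out-of-3 argument shows this fibration is acyclic, so --- pulling it back by axiom (3) and applying axiom (5) --- the map $(1_P,i\sigma)\colon P\to P\times_X P$, which is well defined since $\sigma i\sigma=\sigma$, lifts to a fibrewise homotopy $H\colon 1_P\simeq i\sigma$ over $X$. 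Taking $X=A'$ and $\sigma$ as above, one identifies $\beta$ with the inclusion of the equaliser $\{\,x\in\bar P:w_w\sigma\bar p(x)=\bar p(x)\,\}$ into $\bar P$ and then lifts the homotopy $H\circ\bar p$ along the fibration $\bar p$; the endpoint of the lifted homotopy lands in this equaliser (again using $\sigma w_w=1$), which provides a map $\bar\beta$ with $\beta\bar\beta$ and $\bar\beta\beta$ homotopic to the identities, so $\beta$ is a homotopy equivalence and hence a weak equivalence.

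I expect the main obstacle to be exactly this last step. The reductions in the first paragraph are formal, but showing $\beta$ is a weak equivalence genuinely needs homotopy-theoretic input --- the construction of the fibrewise homotopy $H$, the homotopy lifting property of fibrations, and the fact that homotopy equivalences are weak equivalences --- beyond the pullback-stability axioms alone; indeed $\beta$ is a pullback of a fibrewise diagonal along an arbitrary map, and such diagonals are not fibrations in general, so axiom (3) does not apply to it directly. It is here that \refcoro{makingweeasy} and \refprop{presbypb}, together with the basic homotopy theory of path categories, do the work.
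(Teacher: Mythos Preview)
Your first paragraph correctly reduces the problem, via \refcoro{makingweeasy}, to showing that the pullback $\beta: B' \to \bar P$ of a section $w_w$ of an acyclic fibration $\sigma: P_w \to A'$ along a fibration $\bar p: \bar P \to P_w$ is a weak equivalence. This reduction is also how Brown begins.

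The second paragraph, however, has a genuine gap. Your lifted homotopy $K$ gives $\phi = \beta\bar\beta \simeq 1_{\bar P}$, but the claim $\bar\beta\beta \simeq 1_{B'}$ does not follow. Restricting $K$ along $\beta$ yields a homotopy between $\beta$ and $\beta\bar\beta\beta$ \emph{as maps into $\bar P$}; for this to descend to a homotopy in $B'$ the path $K(\beta(b'))$ would have to satisfy the equaliser condition $w_w\sigma\bar p = \bar p$ throughout. But along this path $\bar p$ traces the loop $H(w_w(a'))$ in the fibre of $\sigma$ over $a'$, which has no reason to be constant, while $w_w\sigma\bar p$ is constantly $w_w(a')$; so the condition fails in the interior. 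A one-sided homotopy inverse is not enough to conclude that $\beta$ is a weak equivalence, and neither 2-out-of-3 nor 2-out-of-6 rescues you from $\beta\bar\beta \simeq 1$ alone. Separately, your argument invokes homotopy lifting (connections) and \reftheo{weisheq}, both of which are developed only after \refprop{basechangeforwe}; there is no actual circularity, but it is not the intended logical order.

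Brown's argument, which the paper cites, is both simpler and stays within the tools already available. In your notation: form the pullback $E = P_w \times_{A'} \bar P$ of $\sigma$ and $\sigma\bar p$. The map $\delta = (\bar p, 1): \bar P \to E$ is a section of the acyclic fibration $\pi_2: E \to \bar P$ (the pullback of $\sigma$), hence a weak equivalence by 2-out-of-3; and since $\pi_1\delta = \bar p$ it is a morphism in $\ct{C}(P_w)$ between the fibrations $\bar p$ and $\pi_1$. Now apply \refprop{presbypb} to $w_w: A' \to P_w$: the functor $w_w^*$ preserves weak equivalences, so $w_w^*\delta$ is one. A direct calculation (using $\sigma w_w = 1$) identifies $w_w^*\bar P$ with $B'$, $w_w^*E$ with $\bar P$, and $w_w^*\delta$ with $\beta$. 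This is where \refprop{presbypb} --- which you mention in your last paragraph but never actually deploy --- does the real work.
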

\begin{proof}
See pages 428 and 429 of \cite{brown73}.
\end{proof}

\subsection{Homotopy} In any path category we can define an equivalence relation on the hom-sets: the homotopy relation.
\begin{defi}{homotopy}
Two parallel arrows $f, g: Y \to X$ are \emph{homotopic}, if there is a path object $PX$ for $X$ with fibration $(s, t): PX \to X \times X$ and a map $h: Y \to PX$ (the \emph{homotopy}) such that $f = sh$ and $g = th$.  In this case, we write $f \simeq g$, or $h: f \simeq g$ if we wish to stress the homotopy $h$.
\end{defi}
At present it is not clear that this definition is independent of the choice of path object $PX$, or that it defines an equivalence relation. In order to prove this, we use the following lemma, which is a consequence of (and indeed equivalent to) the axiom that every acyclic fibration has a section.

\begin{lemm}{lifting}
Suppose we are given a commutative square
\diag{ D \ar[r]^g \ar[d]_w & C \ar[d]^p \\
B \ar[r]_ f & A}
in which $w$ is a weak equivalence and $p$ is a fibration. Then there is a map $l: B \to C$ such that $p l = f$ (for convenience, we will call such a map a \emph{lower filler}).
\end{lemm}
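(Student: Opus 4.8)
The plan is to deduce the existence of the lower filler from Axiom~(5) (every acyclic fibration has a section) via a single base change. First, since $p$ is a fibration, Axiom~(2) provides the pullback
\diag{ E \ar[r]^{q} \ar[d]_{\bar p} & C \ar[d]^{p} \\ B \ar[r]_{f} & A, }
in which $\bar p$ is a fibration and $pq = f\bar p$. The hypothesis $pg = fw$ says precisely that the cone $(w,g)$ factors through this pullback, yielding a map $e : D \to E$ with $\bar p e = w$ and $q e = g$.

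Next I would factor $e$ using \refprop{factlemma}: write $e = p_e w_e$, where $p_e : P_e \to E$ is a fibration and $w_e : D \to P_e$ is a section of an acyclic fibration, hence a weak equivalence. The composite $\bar p p_e : P_e \to B$ is then a fibration by Axiom~(1). It is moreover a weak equivalence: we have $\bar p p_e w_e = \bar p e = w$, and since $w$ and $w_e$ are both weak equivalences, 2-out-of-3 (a consequence of Axiom~(4)) forces $\bar p p_e$ to be one as well. Hence $\bar p p_e$ is an acyclic fibration, so by Axiom~(5) it admits a section $\sigma : B \to P_e$ with $\bar p p_e \sigma = 1_B$.

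Finally I would set $l := q p_e \sigma : B \to C$. Then $pl = (pq)p_e\sigma = f\bar p p_e \sigma = f$, so $l$ is the required lower filler.

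I do not expect a serious obstacle here; the one point that needs care is the bookkeeping that makes $\bar p p_e$ a weak equivalence rather than merely a fibration, which is exactly where the hypothesis that $w$ is a weak equivalence, the factorisation of \refprop{factlemma}, and 2-out-of-3 get used, everything else being formal manipulation of pullbacks and composites. As an alternative one could instead factor $w$ itself via \refcoro{makingweeasy} as an acyclic fibration after a section of an acyclic fibration, treat these two cases separately (the first being immediate from Axiom~(5)) and reassemble; routing the argument through the pullback $f^{*}C$ has the advantage of avoiding this case split.
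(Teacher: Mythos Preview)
Your argument is correct and is essentially the paper's own proof: form the pullback $E=B\times_A C$, factor the induced map $D\to E$ as a weak equivalence followed by a fibration, observe that the composite fibration down to $B$ is acyclic (via 2-out-of-3 applied to $w$), take a section, and push forward to $C$. The only differences are notational and that you spell out the 2-out-of-3 step which the paper leaves implicit.
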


\begin{proof}
Let $k: D \to B \times_A C$ be the map to the pullback with $p_1 k = w$ and $p_2 k = g$, and factor $k$ as $k = q i$ where $i$ is a weak equivalence and $q$ is a fibration. Then $p_1q$ is an acyclic fibration and hence has a section $a$. So if we put $l := p_2qa$, then $pl = pp_2qa = fp_1qa = f$, as desired.
\end{proof}
Just in passing we should note that a statement much stronger than \reflemm{lifting} is true, but that in order to state and prove it we need to develop a bit more theory (see \reftheo{allgood} below).

\begin{coro}{onpathobj}
If $PX$ is a path object for $X$ and $PY$ is a path object for $Y$ and $f: X \to Y$ is any morphism, then there is a map $Pf: PX \to PY$ such that
\diag{ PX \ar[d]_{(s,t)} \ar[r]^{Pf} & PY \ar[d]^{(s, t)} \\
X \times X \ar[r]_{f \times f} & Y \times Y}
commutes. In particular, if $PX$ and $P'X$ are two path objects for $X$ then there is a map $f: PX \to P'X$ which commutes with the source and target maps of $PX$ and $P'X$.
\end{coro}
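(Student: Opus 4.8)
The plan is to produce $Pf$ as a lower filler in the sense of \reflemm{lifting}. Write $r \colon X \to PX$, $(s,t) \colon PX \to X \times X$ for the structure maps of the given path object on $X$, and $r' \colon Y \to PY$, $(s',t') \colon PY \to Y \times Y$ for those of $PY$. The square I would consider has top edge $r'f \colon X \to PY$, left edge $r \colon X \to PX$, right edge the fibration $(s',t') \colon PY \to Y \times Y$, and bottom edge $(f \times f)(s,t) \colon PX \to Y \times Y$.

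The first thing to check is that this square commutes. Going across the top and down the right gives $(s',t')\,r'f = \Delta_Y f = (f,f)$, since $(s',t')\,r' = \Delta_Y$ by the definition of a path object; going down the left and across the bottom gives $(f \times f)(s,t)\,r = (f \times f)\Delta_X = (f,f)$, since $(s,t)\,r = \Delta_X$. Both composites therefore equal $(f,f) \colon X \to Y \times Y$, so the square commutes.

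Now $r$ is a weak equivalence (again by the definition of a path object) and $(s',t')$ is a fibration, so \reflemm{lifting} applies and supplies a map $Pf \colon PX \to PY$ with $(s',t')\,Pf = (f \times f)(s,t)$. This last equation is exactly the commutativity of the square in the statement. For the final assertion I would specialise to $Y = X$, $f = 1_X$ and $PY = P'X$: the map $PX \to P'X$ obtained then satisfies $(s',t')\,Pf = (s,t)$, i.e. $s'\,Pf = s$ and $t'\,Pf = t$, so it commutes with the source and target maps of the two path objects.

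I do not anticipate any serious obstacle; this is essentially a direct application of \reflemm{lifting}, so the routine verification is minimal. The only point requiring a little care is the choice of square: one should \emph{not} try to fill a square built directly out of $(s,t)$ and $(s',t')$, but rather lift the weak equivalence $r \colon X \to PX$ against the fibration $(s',t') \colon PY \to Y \times Y$, using $r'f$ as the top edge.
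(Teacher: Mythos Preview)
Your proposal is correct and follows exactly the same route as the paper: the paper also obtains $Pf$ as a lower filler (via \reflemm{lifting}) of the square with $r$ on the left, $rf$ on top, $(s,t)\colon PY\to Y\times Y$ on the right, and $(fs,ft)=(f\times f)(s,t)$ along the bottom. Your added verification of commutativity and the explicit specialisation to $f=1_X$, $PY=P'X$ are fine elaborations of what the paper leaves implicit.
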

\begin{proof}
Any lower filler in the diagram
\begin{align*}
\xymatrix{ X \ar[r]^{rf} \ar[d]_r & PY \ar[d]^{(s, t)} \\
PX \ar[r]_(.45){(fs, ft)} & Y \times Y   }
\end{align*} 
gives us the desired arrow.
\end{proof}
The second statement in the previous corollary implies that if two parallel maps $f, g: X \to Y$ are homotopic relative to one path object $PY$ on $Y$, then they are homotopic with respect to any path object on $Y$; so in the definition of the homotopy relation nothing depends on the choice of the path object.

In order to show that the homotopy relation is an equivalence relation, and indeed a congruence, we introduce the following definition, which will also prove useful later.
\begin{defi}{hequivalencerel} A fibration $p = (p_1, p_2): R \to X \times X$ is a \emph{homotopy equivalence relation}, if the following three conditions are satisfied:
\begin{enumerate}
\item There is a map $\rho: X \to R$ such that $p \rho = \Delta_X$.
\item There is a map $\sigma: R \to R$ such that $p_1 \sigma = p_2$ and $p_2 \sigma=p_1$.
\item For the pullback
\diag{ R \times_X R \ar[r]^(.55){q_2} \ar[d]_{q_1} & R \ar[d]^{p_1} \\
R \ar[r]_{p_2} & X }
there is a map $\tau: R \times_X R \to R$ such that $p_1q_1 = p_1 \tau$ and $p_2q_2 = p_2 \tau$.
\end{enumerate}
\end{defi}

\begin{prop}{pathobjheq}
If $PX$ is a path object with fibration $p = (s, t): PX \to X \times X$ and weak equivalence $r: X \to PX$, then $p$ is a homotopy equivalence relation.
\end{prop}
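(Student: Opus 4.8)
The plan is to obtain all three structure maps required by \refdefi{hequivalencerel} by repeatedly applying \reflemm{lifting}, using the observation already made in the proof of \refprop{factlemma} that the source and target maps $s, t \colon PX \to X$ are acyclic fibrations: they are fibrations because $(s,t)$ is, and they are weak equivalences because $sr = tr = 1_X$, $r$ is a weak equivalence, and weak equivalences satisfy 2-out-of-3. Throughout I identify $R$ with $PX$ and $(p_1, p_2)$ with $(s, t)$. The reflexivity map is then immediate: set $\rho := r$, and $p\rho = (sr, tr) = \Delta_X$ by the definition of a path object.

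For the symmetry map $\sigma$ I would apply \reflemm{lifting} to the square whose left edge is the weak equivalence $r \colon X \to PX$, whose right edge is the fibration $(s,t) \colon PX \to X \times X$, whose top edge is $r$, and whose bottom edge is the ``swap'' map $(t,s) \colon PX \to X \times X$. This square commutes because both of its composites $X \to X \times X$ equal $\Delta_X$, so \reflemm{lifting} provides a lower filler $\sigma \colon PX \to PX$ with $(s,t)\sigma = (t,s)$, that is, $s\sigma = t$ and $t\sigma = s$, which is precisely condition (2).

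For the transitivity map $\tau$ I would form the pullback $PX \times_X PX$ with projections $q_1, q_2$ as in \refdefi{hequivalencerel} and apply \reflemm{lifting} to the square with left edge the ``constant path pair'' $(r,r) \colon X \to PX \times_X PX$ (well defined since $tr = 1_X = sr$), right edge the fibration $(s,t)$, top edge $r$, and bottom edge $(sq_1, tq_2) \colon PX \times_X PX \to X \times X$; once more both composites $X \to X \times X$ equal $\Delta_X$. The lemma then yields $\tau$ with $s\tau = sq_1$ and $t\tau = tq_2$, which is condition (3). The one point that genuinely requires attention is that $(r,r)$ must be a weak equivalence in order to invoke \reflemm{lifting}: this follows from 2-out-of-3 once one notes that $q_1$ is a pullback of the acyclic fibration $s$, hence is itself an acyclic fibration, while $q_1 \circ (r,r) = r$ is a weak equivalence. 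Beyond this small verification the argument is a routine unwinding of definitions, so I expect no real obstacle.
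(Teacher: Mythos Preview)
Your proposal is correct and follows essentially the same approach as the paper: reflexivity via $\rho = r$, symmetry by applying \reflemm{lifting} to the square with $r$ on the left and $(t,s)$ along the bottom, and transitivity by applying \reflemm{lifting} with the constant-path-pair map (which the paper calls $\alpha$) on the left, after checking it is a weak equivalence via 2-out-of-3 and the fact that $q_1$ is a pullback of an acyclic fibration. The only cosmetic difference is that the paper also notes $q_2$ is acyclic, though only $q_1$ is actually needed for the argument.
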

\begin{proof}
\begin{enumerate}
\item We put $\rho = r$.
\item The map $\sigma$ is obtained as a lower filler in:
\diag{ X \ar[d]_{r} \ar[r]^{r} & PX \ar[d]^{(s, t)} \\
PX \ar[r]_(.45){(t, s)} & X \times X.}
\item Let $\alpha$ be the unique map filling
\diag{ X \ar@{.>}[dr]_{\alpha} \ar@/^/[drr]^r \ar@/_/[ddr]_r \\
& PX \times_X PX \ar[r]_(.6){q_2} \ar[d]^{q_1} & PX \ar[d]^{s} \\
& PX \ar[r]_{t} & X. }
The maps $s$ and $t$ are acyclic fibrations, and therefore their pullbacks $q_1$ and $q_2$ are acyclic fibrations as well; in particular, they are weak equivalences. Since $r$ is also a weak equivalence, the map $\alpha$ is a weak equivalence by 2-out-of-3. Therefore a suitable $\tau$ can be obtained as the lower filler of
\diag{ X \ar[r]^r \ar[d]_\alpha & PX \ar[d]^{(s, t)} \\
PX \times_X PX \ar[r]_(.55){(sq_1, tq_2)}  & X \times X.}
\end{enumerate}
This completes the proof.
\end{proof}

In a way $PX$ is the least homotopy equivalence relation on $X$.
\begin{lemm}{PXleastheqrel}
If $p = (p_1, p_2): R \to X \times X$ is a homotopy equivalence relation on $X$, then there is a map $h: PX \to R$ such that $p_1h = s$ and $p_2h = t$. More generally, any map $f: Y \to X$ gives rise to a morphism $h: PY \to R$ such that $p_1h = fs$ and $p_2h = ft$.
\end{lemm}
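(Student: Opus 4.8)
The plan is to obtain $h$ as a single lower filler, using only the reflexivity witness $\rho$ of the homotopy equivalence relation together with \reflemm{lifting}. First I would observe that it suffices to prove the more general statement, since the first assertion is the special case $Y = X$, $f = 1_X$ (with $PX$ playing the role of $PY$).

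So fix $f: Y \to X$ and a path object $PY$ for $Y$, with weak equivalence $r: Y \to PY$ and fibration $(s, t): PY \to Y \times Y$. I would then consider the square
\diag{ Y \ar[r]^{\rho f} \ar[d]_r & R \ar[d]^{p} \\
PY \ar[r]_{(fs, ft)} & X \times X. }
Its lower-left composite is $(fs, ft) \circ r = (fsr, ftr) = (f, f)$, using $sr = tr = 1_Y$, and its upper-right composite is $p \rho f = \Delta_X f = (f, f)$ by condition (1) of \refdefi{hequivalencerel}; hence the square commutes. Since $r$ is a weak equivalence and $p$ is a fibration, \reflemm{lifting} produces a lower filler $h: PY \to R$ with $p h = (fs, ft)$, that is, $p_1 h = fs$ and $p_2 h = ft$, as required.

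I do not expect any real obstacle: the only points to verify are the commutativity of the square (immediate from $sr = tr = 1$) and that $r$ is a weak equivalence (part of the definition of a path object). It is worth noting that the argument never invokes the symmetry datum $\sigma$ or the transitivity datum $\tau$ of \refdefi{hequivalencerel}, only the reflexivity map $\rho$; this is precisely the sense in which $PX$ is ``the least'' homotopy equivalence relation on $X$.
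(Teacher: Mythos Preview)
Your argument is correct and follows exactly the paper's own proof: both construct $h$ as a lower filler of the square with $\rho f$ along the top, $r$ on the left, $p$ on the right, and $(fs,ft)$ along the bottom, invoking \reflemm{lifting}. Your additional remarks (checking commutativity explicitly and noting that only the reflexivity witness $\rho$ is used) are accurate elaborations but do not depart from the paper's approach.
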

\begin{proof}
The square
\diag{ Y \ar[r]^{\rho f} \ar[d]_r & R \ar[d]^p \\
PY \ar[r]_(.4){(fs, ft)}  & X \times X}
has a lower filler, yielding the desired map $h$.
\end{proof}

\begin{theo}{homotopyiscongr}
The homotopy relation $\simeq$ defines a congruence relation on \ct{C}.
\end{theo}
\begin{proof}
We have already seen that if $P$ is a path object on $X$ and there is a suitable homotopy connecting $f$ and $g$ relative to $P$, then there is such a homotopy relative to any path object $Q$ for $X$. Therefore the statement that $\simeq$ defines an equivalence relation on each hom-set follows from \refprop{pathobjheq}.

For showing that $\simeq$ defines a congruence relation (i.e., that $f \simeq g$ and $k \simeq l$ imply $kf \simeq lg$), it suffices to prove that $f \simeq g$ implies $fk \simeq gk$ and $lf \simeq lg$; the former, however, is immediate, while the latter follows from \refcoro{onpathobj}.
\end{proof}

The previous theorem means that we can quotient \ct{C} by identifying homotopic maps and obtain a new category. The result is the \emph{homotopy category} of \ct{C} and will be denoted by ${\rm Ho}(\ct{C})$.

\begin{defi}{homeq}
A map $f: X \to Y$ is a \emph{homotopy equivalence} if it becomes an isomorphism in ${\rm Ho}(\ct{C})$ or, in other words, if there is a map $g: Y \to X$ (a \emph{homotopy inverse}) such that the composites $fg$ and $gf$ are homotopic to the identities on $Y$ and $X$, respectively. If such a homotopy equivalence $f: X \to Y$ exists, we say that $X$ and $Y$ are \emph{homotopy equivalent}.
\end{defi}

\begin{theo}{weisheq} Weak equivalences and homotopy equivalences coincide.
\end{theo}
\begin{proof} First note that any section of a weak equivalence $f: Y \to X$ is a homotopy inverse. The reason is that if $g: X \to Y$ is a section with $fg = 1$, then $g$ is a weak equivalence as well. Therefore we can find a homotopy $h: gf \simeq 1$ as a lower filler of
\diag{ X \ar[d]_g \ar[r]^{rg} & PY \ar[d]^{(s, t)} \\
Y \ar[r]_(.4){(gf, 1)} & Y \times Y. }
Since every acyclic fibration has a section, it now follows that acyclic fibrations are homotopy equivalences. But then \refcoro{makingweeasy} implies that every weak equivalence is a homotopy equivalence.

For the converse direction we also need to make a preliminary observation: if $f, g: A \to B$ are homotopic and $f$ is a weak equivalence, then so is $g$. To see this suppose that $f$ is a weak equivalence and there is a map $h: A \to PB$ such that $sh = f$ and $th = g$. Since $s$ and $t$ are weak equivalences, it follows from the first equality that $h$ is a weak equivalence and hence from the second equality that $g$ is a weak equivalence.

Now suppose $f: A \to B$ is a homotopy equivalence with homotopy inverse $g: B \to A$. Then in
\diag{ A \ar[r]^f & B \ar[r]^g & A \ar[r]^f & B }
both $gf$ and $fg$ are homotopic to the identity. Therefore both $gf$ and $fg$ are weak equivalences by the previous observation; but then we can use 2-out-of-6 to deduce that $f$ is a weak equivalence.
\end{proof}

\begin{rema}{on2outof6} Other authors who work in categorical frameworks similar to ours often call categories of fibrant objects ``saturated'' if they have the property that every homotopy equivalence is a weak equivalence. In our set-up this is derivable, so our path categories are always saturated in their sense. Note that in order to prove this we have made our first genuine use of the 2-out-of-6 axiom as opposed to the weaker 2-out-of-3 axiom: this is no coincidence, as relative to the 2-out-of-3 axiom the statement that every homotopy equivalence is a weak equivalence is equivalent to the 2-out-of-6 property (this observation is due to Cisinski; see \cite[p.~82--84]{radulescubanu09}). Using the theory that we will develop in the next subsection it will also not be hard to show that if a path category only satisfies 2-out-of-3 one can obtain a ``saturated'' path category from it by taking the same underlying category and the same fibrations, while enlarging the class of weak equivalences to include all homotopy equivalences. This means that restricting to saturated path categories is no real loss of generality; moreover, all the examples we are interested in are already saturated, including the syntactic category associated to type theory. For these reasons we have decided to restrict our attention to path categories that are saturated.
\end{rema}

\begin{coro}{weqsclosedunderretracts}
Weak equivalences are closed under retracts.
\end{coro}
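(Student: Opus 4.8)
The plan is to reduce the statement to the completely formal fact that, in any category, a retract of an isomorphism is again an isomorphism. By \reftheo{weisheq} the weak equivalences are exactly the homotopy equivalences, i.e. exactly those maps of \ct{C} that become isomorphisms in ${\rm Ho}(\ct{C})$. So I would first record what it means for a map $f\colon A\to B$ to be a retract of a map $g\colon C\to D$: there is a commutative diagram
\diag{ A \ar[r]^{i} \ar[d]_{f} & C \ar[r]^{r} \ar[d]^{g} & A \ar[d]^{f} \\
B \ar[r]_{j} & D \ar[r]_{s} & B }
in \ct{C} with $ri = 1_A$ and $sj = 1_B$.

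Next I would apply the canonical quotient functor $\ct{C}\to{\rm Ho}(\ct{C})$ to this diagram. Since it is a functor, it preserves the commutativity of the two squares and the identities $ri=1_A$, $sj=1_B$, so the image is again a retract diagram, now in ${\rm Ho}(\ct{C})$. If $g$ is a weak equivalence, then by \reftheo{weisheq} its image in ${\rm Ho}(\ct{C})$ is an isomorphism, with some two-sided inverse $g^{-1}$.

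Finally I would exhibit the inverse of (the image of) $f$ explicitly as $r\,g^{-1}\,j$ and check $f\cdot rg^{-1}j = sg\,g^{-1}j = sj = 1_B$ and $rg^{-1}j\cdot f = rg^{-1}gi = ri = 1_A$, where I have used $gi=jf$ and $fr=sg$. Hence the image of $f$ in ${\rm Ho}(\ct{C})$ is an isomorphism, i.e. $f$ is a homotopy equivalence, and so by \reftheo{weisheq} again $f$ is a weak equivalence. There is no real obstacle here: the only thing that does any work is \reftheo{weisheq} itself (and, implicitly, the 2-out-of-6 axiom behind it), after which the argument is purely diagrammatic; one should just take a little care that the retract data are honest arrows of \ct{C} so that the quotient functor can be applied.
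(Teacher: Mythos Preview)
Your argument is correct and is exactly the intended one: the paper states this as an unproved corollary of \reftheo{weisheq}, and the only content is precisely the observation you spell out, namely that the quotient functor $\ct{C}\to{\rm Ho}(\ct{C})$ turns the retract diagram into a retract of an isomorphism.
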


\begin{coro}{univpropofho}
The quotient functor $\gamma: \ct{C} \to {\rm Ho}(\ct{C})$ is the universal solution to inverting the weak equivalences.
\end{coro}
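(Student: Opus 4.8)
The plan is to verify the two-part universal property directly: that $\gamma$ sends weak equivalences to isomorphisms, and that it is initial among functors out of $\ct{C}$ with this property. The first half is immediate from \reftheo{weisheq}: a weak equivalence is a homotopy equivalence, hence its image under $\gamma$ has a two-sided inverse in ${\rm Ho}(\ct{C})$, so it is an isomorphism there. The content is in the second half, and the key observation that makes it work is that $\gamma$ is \emph{bijective on objects} and \emph{full} — every object of ${\rm Ho}(\ct{C})$ is an object of \ct{C}, and every morphism in ${\rm Ho}(\ct{C})$ is (by construction, as ${\rm Ho}(\ct{C})$ is the quotient of \ct{C} by the congruence $\simeq$) represented by a morphism of \ct{C}. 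So any functor $F : \ct{C} \to \ct{D}$ that inverts weak equivalences has at most one factorisation $\bar F : {\rm Ho}(\ct{C}) \to \ct{D}$ through $\gamma$, namely the one given on objects by $\bar F(X) = F(X)$ and on a morphism $[f]$ by $\bar F([f]) = F(f)$.

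The remaining task is to show this prescription is well-defined and functorial, which amounts to a single claim: if $f \simeq g : Y \to X$ then $F(f) = F(g)$. To prove this I would unwind the homotopy. Let $h : Y \to PX$ with $(s,t) \circ h = (f,g)$, where $r : X \to PX$ is the weak equivalence and $(s,t) : PX \to X \times X$ the fibration of a path object. As noted in the proof of \refprop{factlemma}, $sr = tr = 1_X$ together with 2-out-of-3 forces $s$ and $t$ to be acyclic fibrations, in particular weak equivalences. Hence $F(s)$ and $F(t)$ are isomorphisms in \ct{D}. From $Fs \circ Fr = \mathrm{id} = Ft \circ Fr$ and the invertibility of $Fs, Ft$ we get $Fr = (Fs)^{-1} = (Ft)^{-1}$, so $Fs = Ft$ as maps $F(PX) \to F(X)$. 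Composing on the right with $F(h)$ gives $F(f) = F(sh) = F(s)F(h) = F(t)F(h) = F(th) = F(g)$, as required. Functoriality and preservation of identities for $\bar F$ are then inherited from $F$ along the bijection-on-objects, full functor $\gamma$, and $\bar F \circ \gamma = F$ holds on objects and morphisms by construction.

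I do not anticipate a genuine obstacle here; the argument is a routine consequence of \reftheo{weisheq} and the fact that the source and target maps of a path object are weak equivalences. The only point that needs a moment's care is the uniqueness of the factorisation — one must invoke that $\gamma$ is surjective on objects and full, so that $\bar F$ is completely pinned down by $\bar F \gamma = F$ — but this is built into the construction of ${\rm Ho}(\ct{C})$ as a quotient and requires no further work.
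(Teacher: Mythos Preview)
Your proof is correct and follows essentially the same route as the paper: both arguments reduce to showing that any functor $F$ inverting weak equivalences must identify homotopic maps, and both do so by exploiting $sr = tr = 1_X$ together with the fact that $r$, $s$, $t$ are weak equivalences. The paper is slightly more direct in that it invokes the invertibility of $F(r)$ (from $r$ being a weak equivalence) to conclude $F(s) = F(r)^{-1} = F(t)$ in one step, whereas you go via the invertibility of $F(s)$ and $F(t)$; but this is a cosmetic difference, not a different approach.
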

\begin{proof}
We have just seen that this functor inverts the weak equivalences; conversely, any functor $\delta: \ct{C} \to  \ct{D}$ which sends weak equivalences to isomorphisms must identify homotopic maps, for if $PX$ is a path object with $r: X \to PX$ and $(s, t): PX \to X \times X$, then $\delta(s) = \delta(r)^{-1} = \delta(t)$.
\end{proof}

\subsection{Homotopy pullbacks.} Path categories need not have pullbacks; what they do have are homotopy pullbacks, a classical notion that we will now recall.

Given two arrows $f: A \to I$ and $g: B \to I$ one can take the pullback
\diag{ A\times_I^h B \ar[d]_{(p_1, p_2)} \ar[r] & PI \ar[d]^{(s, t)} \\
A \times B \ar[r]_{f \times g} & I \times I.}
This object $A \times_I^h B$ fits in a square
\diag{ A \times_I^h B \ar[r]^(.6){p_2} \ar[d]_{p_1} & B \ar[d]^g \\
A \ar[r]_f & I, }
which commutes up to homotopy.
\begin{defi}{hompbk}
Suppose
\diag{ C \ar[r]^{q_2} \ar[d]_{q_1} & B \ar[d]^g \\
A \ar[r]_f & I }
is a square which commutes up to homotopy. If there is a homotopy equivalence $h: C \to A \times_I^h B$ such that $q_i = p_i h$ for $i \in \{1, 2\}$, then the square above is called a \emph{homotopy pullback square} and $C$ is a \emph{homotopy pullback} of $f$ and $g$.
\end{defi}

\begin{rema}{comphompbks}
Clearly, a homotopy pullback is unique up to homotopy, but there are different ways of constructing it. For example, the homotopy pullback can be obtained by taking the fibrant replacement of either $f$ or $g$ (or both) and then taking the actual pullback. Using this one easily checks that the following well-known properties hold:
\begin{lemm}{propofhompbs}
\begin{enumerate}
\item[(i)] If
\diag{ D \ar[r] \ar[d]_g & B \ar[d]^f \\
C \ar[r] & A }
is a homotopy pullback and $f$ is a homotopy equivalence, then so is $g$.
\item[(ii)] If
\diag{ F \ar[d] \ar[r] & D \ar[r] \ar[d] & B \ar[d] \\
E \ar[r] & C \ar[r] & A }
commutes and the square on the right is a homotopy pullback, then the square on the left is a homotopy pullback if and only if the outer rectangle is a homotopy pullback.
\end{enumerate}
\end{lemm}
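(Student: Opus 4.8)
The plan is to deduce both parts from the corresponding strict statements about genuine pullbacks, using the two facts already recorded in \refrema{comphompbks}. First, (a): for a fibration $p \colon B \to A$ and any $c \colon C \to A$, the genuine pullback $C \times_A B$ together with its projections is a homotopy pullback. Second, (b): every map $f \colon B \to A$ factors, by \refprop{factlemma}, as $B \xrightarrow{w} \bar B \xrightarrow{\pi} A$ with $w$ a weak equivalence and $\pi$ a fibration, and replacing $f$ by this $\pi$ in a cospan changes the homotopy pullback only up to homotopy equivalence of cones --- which follows from (a), Brown's Lemma (\refprop{presbypb}), and \reftheo{weisheq}. Throughout I would use freely that homotopy equivalences are exactly the isomorphisms of ${\rm Ho}(\ct{C})$, so that they satisfy 2-out-of-3.

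For (i): factor the right leg $f$ as $\pi w$ as above and form the genuine pullback $P := C \times_A \bar B$, so that $P \to C$ is a fibration and, by (a) and (b), $P$ with its projections is a homotopy pullback of the given cospan. Since $D$ is one as well, there is a homotopy equivalence $e \colon D \to P$ with $(P \to C) \circ e \simeq g$, so $g$ is a homotopy equivalence if and only if $P \to C$ is. Now $\pi$ is a homotopy equivalence by 2-out-of-3 applied to $f = \pi w$ (the map $f$ being one by hypothesis and $w$ by \reftheo{weisheq}), hence a weak equivalence by \reftheo{weisheq}, hence an acyclic fibration; so axiom (3) makes its pullback $P \to C$ an acyclic fibration, in particular a homotopy equivalence. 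Therefore $g$ is a homotopy equivalence.

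For (ii): factor the leg $B \to A$ as $B \xrightarrow{w} \bar B \xrightarrow{\pi} A$ and set $\bar D := C \times_A \bar B$, a genuine pullback; then $\bar D \to C$ is a fibration and $\bar D$ is a homotopy pullback of the right-hand cospan, so, the right square being a homotopy pullback, $D$ and $\bar D$ are linked by a homotopy equivalence respecting the projections (compatibly, via $w$, with the maps down to $B$). Next set $\bar F := E \times_C \bar D$, a genuine pullback along the fibration $\bar D \to C$, so that $\bar F \to E$ is a fibration; by (a), $\bar F$ is a homotopy pullback of $E \to C \leftarrow \bar D$, hence, transporting along $D \simeq \bar D$, of the left-hand cospan. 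On the other hand the strict pasting law for genuine pullbacks gives a canonical isomorphism $\bar F = E \times_C (C \times_A \bar B) \cong E \times_A \bar B$, exhibiting $\bar F$ as the genuine pullback of the fibration $\pi$ along $E \to A$; so by (a) it is also a homotopy pullback of $E \to A \leftarrow \bar B$, and hence by (b) of the outer cospan. Thus the single object $\bar F$, with suitable structure maps, serves as a homotopy pullback both of the left-hand cospan and of the outer cospan, so each of ``$F$ is a homotopy pullback of the left square'' and ``$F$ is a homotopy pullback of the outer rectangle'' amounts to $F$ being linked to $\bar F$ by a cone equivalence, and the two amount to the same thing once the two cone structures on $\bar F$ are identified through the isomorphisms and equivalences above.

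The hard part will be exactly this last identification in (ii): checking that the two fibrant replacements, the genuine-versus-homotopy pullback comparisons of (a), and the pasting isomorphism are mutually compatible with all the relevant projections up to homotopy. This is routine given that $\simeq$ is a congruence (\reftheo{homotopyiscongr}) and that post-composition with a homotopy equivalence induces a bijection on homotopy classes of maps, but it is where the bookkeeping concentrates; part (i), by contrast, should be essentially immediate.
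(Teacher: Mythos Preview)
Your proposal is correct and follows exactly the approach the paper indicates: the paper does not give a proof of this lemma at all, merely stating in \refrema{comphompbks} that the homotopy pullback can be computed by fibrant replacement followed by an actual pullback and that ``using this one easily checks'' the two properties. Your argument is a careful spelling-out of precisely this strategy, and the bookkeeping you flag as the hard part is indeed what the paper leaves entirely to the reader.
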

\end{rema}

\subsection{Connections and transport} One key fact about fibrations in path categories is that they have a path lifting property and allow for what the type-theorists call transport. The aim of this subsection is to show these facts, starting with the latter.

To formulate the notion of transport we need some additional terminology.
\begin{defi}{fibrewisehom}
Suppose $f, g : Y \to X$ are parallel arrows and $X$ comes with a fibration $p: X \to I$. If $pf = pg$, then we can compare $f: Y \to X$ and $g: Y \to X$ with respect to the path object $(s, t): P_I(X) \to X \times_I X$ of $X$ in $\ct{C}(I)$. Indeed, one calls $f$ and $g$ \emph{fibrewise homotopic} if there is a map $h: Y \to P_I(X)$ such that $sh = f$ and $th = g$; in that case one writes $f \simeq_I g$, or $h: f \simeq_I g$, if we wish to stress the homotopy. (If $pf = pg$ is a fibration, then this is just the homotopy relation in $\ct{C}(I)$; but, and this will be important later, this definition makes sense even when $pf = pg$ is not a fibration.)
\end{defi}

Recall from \refprop{factlemma} that any map $f: Y \to X$ can be factored as a weak equivalence $w_f: Y \to P_f$ followed by a fibration $p_f: P_f \to X$, where $P_f = Y \times_X PX$ is the pullback
\diag{ P_f \ar[r]^{p_2} \ar[d]_{p_1} & PX \ar[d]^s \\
Y \ar[r]_f & X,}
while $w_f = (1_Y, rf)$ and $p_f = tp_2$. If $f$ is a fibration, then we can regard both $Y$ and $P_f$ as objects in $\ct{C}(X)$ via $f$ and $p_f$, respectively, and $w_f$ as a morphism between them in $\ct{C}(X)$.

\begin{defi}{transport}
Let $f: Y \to X$ be a fibration. A \emph{transport structure} on $f$ is a morphism $\Gamma: P_f \to Y$ such that $f \Gamma = p_f$ and $\Gamma w_f \simeq_X 1_Y$.
\end{defi}

The idea behind transport is this: given an element $y \in Y$ and a path $\alpha: x \to x'$ in $X$ with $f(y) = x$, one can transport $y$ along $\alpha$ to obtain an element $y'$ with $f(y') = x'$; in addition, we demand that in case $\alpha$ is the identity path on $x$, then the element $y'$ should be connected to $y$ by a path which lies entirely in the fibre over $x$. In order to show that every fibration carries a transport structure, we need to strengthen \reflemm{lifting} to:
\begin{lemm}{exofweakfillers}
Suppose we are given a commutative square
\diag{ D \ar[r]^g \ar[d]_w & C \ar[d]^p \\
B \ar[r]_ f  & A}
in which $w$ is a weak equivalence and $p$ is a fibration. Then there is a map $l: B \to C$, unique up to homotopy, such that $p l = f$ and $l w \simeq g$.
\end{lemm}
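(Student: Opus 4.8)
The plan is to reduce the statement to \reflemm{lifting} together with the congruence and lifting machinery already established. Existence of a lower filler $l$ with $pl = f$ is exactly \reflemm{lifting}, so the two genuinely new claims are the homotopy $lw \simeq g$ and the uniqueness of $l$ up to homotopy. For the first claim, I would begin by imitating the proof of \reflemm{lifting}: form the map $k = (w,g): D \to B \times_A C$ into the pullback $B \times_A C$, factor it as $k = qi$ with $i$ a weak equivalence and $q$ a fibration, observe that the first projection $p_1 q$ of the pullback (which is a fibration pulled back from $p$) composed with $q$ is an \emph{acyclic} fibration since $p_1 k = w$ is a weak equivalence and $i$ is a weak equivalence, hence has a section $a$, and set $l := p_2 q a$. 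Then $pl = f$ as before. To get $lw \simeq g$, I would note that $g = p_2 k = p_2 q i$ and $lw = p_2 q a w$, so it suffices to produce a homotopy $aw \simeq i$ over $B$ — or rather a homotopy between the two maps $D \to (\text{the middle object of the factorisation})$ and then postcompose with $p_2 q$. The key point is that $a w$ and $i$ are both sections-ish comparisons of the same object: $(p_1 q) a = 1_B$ so $(p_1 q)(aw) = w = (p_1 q) i$, and since $p_1 q$ is an acyclic fibration, hence a homotopy equivalence with the section $a$ as homotopy inverse (as in the proof of \reftheo{weisheq}), precomposition with it is an equivalence on hom-sets up to homotopy; combined with $a(p_1 q) \simeq 1$ this forces $aw \simeq i$. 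Applying $p_2 q$ then gives $lw \simeq g$.

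For uniqueness up to homotopy, suppose $l, l': B \to C$ both satisfy $pl = pl' = f$ and $lw \simeq g \simeq l'w$. Then $pl = pl'$, so $l$ and $l'$ are comparable by fibrewise homotopy over $A$, and it suffices to show $l \simeq_A l'$ (which implies $l \simeq l'$). Consider the path object $P_A(C)$ for $C$ in the slice path category $\ct{C}(A)$, with $(s,t): P_A(C) \to C \times_A C$ and $r: C \to P_A(C)$. From $lw \simeq l'w$ one extracts, after noting both sides lie over $f$, a fibrewise homotopy $H: D \to P_A(C)$ with $sH = lw$, $tH = l'w$ (here one uses that ordinary homotopy between maps lying over $A$ can be upgraded to a fibrewise homotopy — this follows from \reflemm{PXleastheqrel} applied in $\ct{C}(A)$, or directly by a filler argument). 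Now I want a map $B \to P_A(C)$ whose source and target are $l$ and $l'$; this is obtained as a lower filler in the square
\diag{ D \ar[r]^{H} \ar[d]_{w} & P_A(C) \ar[d]^{(s,t)} \\
B \ar[r]_{(l, l')} & C \times_A C }
using \reflemm{lifting} (valid since $w$ is a weak equivalence and $(s,t)$ is a fibration in $\ct{C}(A)$, hence in $\ct{C}$). The filler composed with $(s,t)$ equals $(l,l')$, which is precisely a fibrewise homotopy $l \simeq_A l'$, hence $l \simeq l'$.

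The main obstacle I anticipate is bookkeeping rather than conceptual: making precise the passage between plain homotopies $lw \simeq g$ and fibrewise homotopies over $A$ (since the maps involved need not be fibrations, one must invoke \refdefi{fibrewisehom} in its more general form and check that the relevant path objects exist and behave well), and verifying that $p_1 q$ really is an acyclic fibration and that the comparison $aw \simeq i$ goes through cleanly — essentially a 2-out-of-3 / homotopy-equivalence argument that must be run in the right slice. Once those are in hand, both the existence of the homotopy and the uniqueness reduce to straightforward applications of \reflemm{lifting} and \reftheo{homotopyiscongr}.
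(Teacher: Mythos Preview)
Your existence argument is essentially the paper's: form $k=(w,g):D\to B\times_A C$, factor $k=qi$, observe that $p_1q$ is an acyclic fibration, take a section $a$, and set $l=p_2qa$. Your reduction to $aw\simeq i$ via $a(p_1q)\simeq 1$ is exactly the computation the paper carries out (written there as $lw=p_2qap_1qi\simeq p_2qi=g$).

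For uniqueness, however, your route through fibrewise homotopy has a genuine gap. The step ``ordinary homotopy between maps lying over $A$ can be upgraded to a fibrewise homotopy'' is not available at this point and is in fact false in general: in the paper's own example of the universal cover $\mathbb{R}\to S^1$, any two points of $\mathbb{R}$ over the same basepoint are homotopic in $\mathbb{R}$ but not fibrewise homotopic unless they coincide. Neither \reflemm{PXleastheqrel} in $\ct{C}(A)$ nor a direct filler produces a map $D\to P_A(C)$ from a map $D\to PC$, since paths in $C$ need not lie in a single fibre.

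Two easy repairs. First, your own argument works if you replace $P_A(C)$ by the ordinary path object $PC$: the square with $H:D\to PC$ on top, $w$ on the left, $(s,t):PC\to C\times C$ on the right, and $(l,l'):B\to C\times C$ along the bottom commutes, so \reflemm{lifting} gives a lower filler $B\to PC$ and hence $l\simeq l'$. Second, and this is what the paper does, simply invoke \reftheo{weisheq}: since $w$ is a weak equivalence it has a homotopy inverse $w'$, and then $l\simeq lww'\simeq l'ww'\simeq l'$. The stronger fibrewise uniqueness you were aiming for is true, but it is the content of the later \reftheo{allgood} and requires more work.
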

\begin{proof}
We repeat the earlier proof: let $k: D \to B \times_A C$ be the map to the pullback with $p_1 k = w$ and $p_2 k = g$, and factor $k$ as $k = q i$ where $i$ is a weak equivalence and $q$ is a fibration. Then $p_1q$ is an acyclic fibration, so has a section $a$. So if we put $l := p_2qa$, then $pl = pp_2qa = fp_1qa = f$. But (the proof of) \reftheo{weisheq} implies that $ap_1q \simeq 1$, so that
\[ lw = p_2qaw = p_2qap_1k = p_2qap_1qi \simeq p_2qi = p_2k = g. \]
To see that $l$ is unique up to homotopy, note that, more generally, the fact that weak equivalences are homotopy equivalences implies that if $lw \simeq l'w$ and $w$ is a weak equivalence, then $l \simeq l'$.
\end{proof}

\begin{theo}{fibrwithtransport}
Every fibration $f: Y \to X$ carries a transport structure. Moreover, transport structures are unique up to fibrewise homotopy over $X$.
\end{theo}
\begin{proof}
If $f: Y \to X$ is a fibration then the commuting square
\diag{ Y \ar[r]^1 \ar[d]_{w_f} & Y \ar[d]^f \\
P_f \ar[r]_{p_f} & X}
does not only live in \ct{C}, but also in $\ct{C}(X)$. Applying the previous lemma to this square in $\ct{C}(X)$ gives one the desired transport structure.
\end{proof}

\begin{defi}{connection}
Let $f: Y \to X$ be a fibration. A \emph{connection} on $f$ consists of a path object $PY$ for $Y$, a fibration $Pf: PY \to PX$ commuting with the $r,s$ and $t$-maps on $PX$ and $PY$, together with a morphism $\nabla: P_f \to PY$ such that $Pf \circ \nabla = p_2$ and $s \nabla = p_1$.
\end{defi}

The idea behind a connection is this: given an element $y \in Y$ and a path $\alpha: x \to x'$ in $X$ with $f(y) = x$, the connection finds a path $\beta: y \to y'$ with $f(\beta) = \alpha$.

\begin{theo}{existenceconnections} Let $f: Y \to X$ be a fibration in a path category \ct{C} and assume that $PX$ is a path object on $X$ and $\Gamma: Y \times_X PX \to Y$ is a transport structure on $f$. Then we can construct a path object $PY$ on $Y$ and a fibration $Pf: PY \to PX$ with the following properties:
\begin{enumerate}
\item[(i)] $Pf$ commutes with the $r$, $s$ and $t$-maps on $PX$ and $PY$.
\item[(ii)] There exists a connection structure $\nabla: P_f \to PY$ with $t\nabla = \Gamma$.
\end{enumerate}
In particular, every fibration $f: Y \to X$ carries a connection structure.
\end{theo}
\begin{proof}
The proof will make essential use of the path object $P_X(Y)$ of $Y$ in $\ct{C}(X)$. We will write $\rho: Y \to P_X(Y)$ and $(\sigma, \tau): P_X(Y) \to Y \times_X Y$ for the factorisation of $Y \to Y \times_X Y$ as a weak equivalence followed by a fibration.

The idea is to construct $PY$ as $P_\Gamma$ in $\ct{C}(X)$, that is, as the following pullback:
\diag{ PY \ar[d]_{q_1} \ar[r]^(.45){q_2} & P_X(Y) \ar[d]^{\sigma} \\
P_f \ar[r]_{\Gamma} & Y. }
Since $\Gamma: P_f \to Y$ is a transport structure, there is a homotopy $h: \Gamma w_f \simeq 1$ in $\ct{C}(X)$. This allows us to factor the diagonal $Y \to Y \times Y$ as $(w_f, h): Y \to PY$ followed by $(p_1q_1, \tau q_2): PY \to Y \times Y$, so to prove that this defines a path object on $Y$ we need to show that the first map is a weak equivalence and the second a fibration. For the former, note that $q_1(w_f,h) = w_f$, where $w_f$ is a weak equivalence and $q_1$ is an acyclic fibration, as it is the pullback of $\sigma$. For the latter, note that $PY = P_\Gamma$ in $\ct{C}(X)$ can also be constructed as the pullback
\diag{ PY \ar[r]^{q_2} \ar[d]_{(q_1, \tau q_2)} & P_X(Y) \ar[d]^{(\sigma, \tau)} \\
P_f \times_X Y \ar[r]_{\Gamma \times_X Y} & Y \times_X Y,}
as in the proof of \refprop{factlemma}, so that $(q_1, \tau q_2)$ is a fibration. Moreover, \[ p_1 \times 1: P_f \times_X Y \to Y \times Y \] is a fibration as well, as it arises in the following pullback:
\diag{ P_f \times_X Y \ar[d]_{p_1 \times 1} \ar[r]^(.6){ p_2\pi_1} & PX \ar[d]^{(s, t)} \\
Y \times Y \ar[r]_{f \times f} & X \times X.}
So $(p_1 \times 1)(q_1, \tau q_2) = (p_1q_1,  \tau q_2)$ is a fibration, as desired. In addition, we have a map $Pf := p_2q_1: PY \to PX$, which is also fibration. We now check points (i) and (ii).

(i) We have to show that $Pf$ commutes with the maps $r, s, t$ on $PY$ and $PX$.
\begin{enumerate}
\item The $r$-map on $PY$ is $(w_f, h)$, and we have
\[ Pf \circ r_Y = p_2q_1(w_f, h) = p_2w_f = r_X \circ f. \]
\item The $s$-map on $PY$ is $p_1q_1$, and we have
\[ s_X \circ Pf = sp_2q_1 = fp_1q_1 = f \circ s_Y. \]
\item The $t$-map on $PY$ is $\tau q_2$, and we have
\[ t_X \circ Pf = tp_2q_1 = p_f q_1 = f\Gamma q_1 = f\sigma q_2 = f \tau q_2 = f \circ t_Y, \]
where we have used that $f \sigma = f \tau$ is the map exhibiting $P_X(Y)$ as an object of $\ct{C}(X)$.
\end{enumerate}

(ii): To construct the connection, we simply put $\nabla := (1, \rho \Gamma)$. Then
\[ s_Y \nabla = p_1q_1(1, \rho\Gamma) = p_1 \]
and
\[ Pf \circ \nabla = p_2q_1(1, \rho\Gamma) = p_2, \]
showing that $\nabla$ is indeed a connection. In addition, one has
\[ t_Y \nabla = \tau q_2(1,\rho \Gamma) = \tau \rho \Gamma = \Gamma, \]
as desired.
\end{proof}

\begin{rema}{invariantconnections} We have just shown that if $PX$ is any path object for $X$ and $f: Y \to X$ is any fibration, one can find a suitable path object $PY$ for $Y$ and a connection map $\nabla: P_f \to PY$ for that particular path object. From this it does not follow that if $P'Y$ is another path object for $Y$ then one can find a connection $\nabla': P_f \to P'Y$ as well: in that sense the notion of connection is not invariant. In view of \refcoro{onpathobj} we will have a map $\nabla': P_f \to P'Y$ with $s\nabla' = p_1$ and $ft\nabla' = tp_2$. We will occasionally meet such \emph{weak connections} as well, where the main point about such weak connections is:
\begin{coro}{aboutweakconnections}
Let $f: Y \to X$ be a fibration and $PY$ be an arbitrary path object for $Y$. Then there is a map $\nabla: P_f \to PY$ such that $s \nabla = p_1$ and $ft \nabla = tp_2$.
\end{coro}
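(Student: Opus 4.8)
The plan is to obtain $\nabla$ from the genuine connection produced by \reftheo{existenceconnections}, by composing it with a comparison map between path objects supplied by \refcoro{onpathobj}. The point is that \reftheo{existenceconnections} already gives us a connection, but only for one particular, carefully constructed path object on $Y$; to pass to an arbitrary path object $PY$ we must give up strict commutation with the $t$-map, which is exactly why the conclusion is phrased with $ft\nabla = tp_2$ rather than $t\nabla = \Gamma$.

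In detail, I would proceed as follows. First, invoke \reftheo{fibrwithtransport} to choose a transport structure $\Gamma: P_f \to Y$ on $f$ (with $f\Gamma = p_f$ and $\Gamma w_f \simeq_X 1_Y$). Applying \reftheo{existenceconnections} to the fibration $f$, the ambient path object $PX$ on $X$ (the one used to form $P_f = Y \times_X PX$) and this $\Gamma$, we get a path object $P'Y$ on $Y$, a fibration $Pf: P'Y \to PX$, and a connection $\nabla': P_f \to P'Y$ satisfying $s_{P'Y}\,\nabla' = p_1$ and $t_{P'Y}\,\nabla' = \Gamma$. Next, since $P'Y$ and the given $PY$ are both path objects for $Y$, the second part of \refcoro{onpathobj} yields a map $j: P'Y \to PY$ commuting with source and target maps, i.e.\ $s_{PY}\, j = s_{P'Y}$ and $t_{PY}\, j = t_{P'Y}$. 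Now set $\nabla := j\,\nabla': P_f \to PY$. Then $s_{PY}\,\nabla = s_{P'Y}\,\nabla' = p_1$, and $t_{PY}\,\nabla = t_{P'Y}\,\nabla' = \Gamma$, so that $f\, t_{PY}\,\nabla = f\Gamma = p_f = tp_2$, which is precisely what is required.

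I do not expect any real obstacle here: the argument is a two-line composition once \reftheo{existenceconnections} and \refcoro{onpathobj} are in hand. The only thing to be careful about is that the second defining equation of a weak connection, $ft\nabla = tp_2$, is not literally an output of \reftheo{existenceconnections}; it is recovered by combining the identity $t\nabla' = \Gamma$ with the defining property $f\Gamma = p_f$ of a transport structure and the definition $p_f = tp_2$ from \refprop{factlemma}. (An alternative, self-contained route would be to mimic the proof of \refcoro{onpathobj} directly, producing $\nabla$ as a lower filler of a square built from $\nabla'$ and a comparison of path objects; but routing through the already-established results is cleaner.)
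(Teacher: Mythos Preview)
Your proposal is correct and follows exactly the route sketched in the paper: the paper's own argument is contained in the remark immediately preceding the corollary, where it says that ``in view of \refcoro{onpathobj}'' one obtains the weak connection by composing the connection from \reftheo{existenceconnections} with a comparison map between path objects. You have simply unpacked this in full detail, including the bookkeeping $f\Gamma = p_f = tp_2$ needed for the second equation.
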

\end{rema}

We conclude this subsection by noting the following consequence of \reftheo{existenceconnections}, which we will repeatedly use in what follows.
\begin{prop}{fillersuptohomotopy}
If a triangle
\diag{ & Y \ar[d]^p \\
Z \ar[r]_g \ar[ur]^f & X }
with a fibration $p$ on the right commutes up to a homotopy $h: pf \simeq g$, then we can also find a map $f': Z \to Y$, homotopic to $f$, such that for $f'$ the triangle commutes strictly, that is, $pf' = g$.
\end{prop}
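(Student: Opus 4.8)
The plan is to reduce the statement to the existence of a connection structure on $p$, which is exactly what \reftheo{existenceconnections} supplies. The heuristic is the one described after \refdefi{connection}: we are handed the ``point'' $f$ of $Y$ together with a path $h$ in $X$ from $pf$ to $g$; applying a connection to the pair $(f,h)$ produces a path in $Y$ which starts at $f$ and lies over $h$, so its endpoint is a map $f' \colon Z \to Y$ with $pf' = g$, while the path itself exhibits $f \simeq f'$.

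Concretely, I would first invoke \reftheo{existenceconnections} to choose a connection on $p$: a path object $PY$ for $Y$, a fibration $Pp \colon PY \to PX$ commuting with the $r$, $s$ and $t$ maps, and a morphism $\nabla \colon P_p \to PY$ with $Pp \circ \nabla = p_2$ and $s_Y \nabla = p_1$, where $P_p = Y \times_X PX$ carries the projections $p_1 \colon P_p \to Y$ and $p_2 \colon P_p \to PX$. Next I would unwind the hypothesis: $h \colon pf \simeq g$ means there is a map $h \colon Z \to PX$ with $s_X h = pf$ and $t_X h = g$, and since $pf = s_X h$ the pair $(f,h)$ defines a map $Z \to P_p$. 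I then set $f' := t_Y \nabla(f,h)$. On the one hand $\nabla(f,h) \colon Z \to PY$ has $s_Y$-component $p_1(f,h) = f$ and $t_Y$-component $f'$, so it is a homotopy $f \simeq f'$. On the other hand, using that $Pp$ commutes with the $t$ maps, $pf' = p\,t_Y \nabla(f,h) = t_X\,Pp\,\nabla(f,h) = t_X\,p_2(f,h) = t_X h = g$, so the triangle commutes strictly with $f'$ in place of $f$.

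I do not expect a real obstacle once the connection is identified as the right tool — the remainder is a short diagram chase. The two points that deserve a line of justification are: that the clause in \refdefi{connection} (and property (i) of \reftheo{existenceconnections}) requiring $Pp$ to commute with the $t$-maps is precisely what forces $pf' = g$; and that the homotopy $f \simeq f'$ obtained above, although witnessed by the particular path object that comes attached to the connection, is nonetheless a genuine homotopy in the sense of \refdefi{homotopy}, as guaranteed by the remark following \refcoro{onpathobj}. Alternatively, one could invoke part (ii) of \reftheo{existenceconnections} together with a transport structure $\Gamma$ to identify $f'$ with $\Gamma(f,h)$ and conclude via $p\Gamma = p_f$; the argument through the $t$-maps is slightly more direct.
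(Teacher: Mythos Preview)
Your proof is correct and follows essentially the same approach as the paper: both invoke the connection structure $\nabla$ from \reftheo{existenceconnections}, form the map $(f,h)\colon Z \to P_p$, set $f' = t_Y\nabla(f,h)$, and verify $pf' = g$ via the commutation of $Pp$ with the $t$-maps while $s_Y\nabla(f,h) = f$ supplies the homotopy $f \simeq f'$. The paper's argument is slightly terser but the steps and calculations are identical.
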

\begin{proof}
Let $h: pf \simeq g$ be a homotopy and choose a path object $PY$ for $Y$, a fibration $Pp: PY \to PX$ and a connection structure $\nabla: P_p \to PY$ as in \reftheo{existenceconnections}. Put $h' = \nabla(f, h)$ and $f' := th'$. One may now calculate that
\[ p f' = pt\nabla(f, h) = t \circ Pp \circ \nabla \circ (f, h) = tp_2(f, h) = t h = g, \]
so the triangle commutes strictly for $f'$. Moreover, \[ sh' = s\nabla(f, h) = p_1(f, h) = f,  \]
so $h'$ is a homotopy between $f$ and $f'$.
\end{proof}

\subsection{Lifting properties} At various points (\reflemm{lifting} and \reflemm{exofweakfillers}) we have seen statements to the effect that weak equivalences have a weak lifting property with respect to the fibrations. \reflemm{lifting} said that if
\diag{ A \ar[r]^m \ar[d]_w & C \ar[d]^ p \\
B \ar[r]_n & D }
is a commutative square with a weak equivalence $w$ on the left and a fibration $p$ on the right, there is a map $l: B \to C$ such that $pl = n$. In \reflemm{exofweakfillers} we saw that $l$ could be chosen such that $lw \simeq m$. The aim of this subsection is to show that this can be strengthened even further: we can find a map $l$ such that $p l = n$ and $l w \simeq_D m$, where $\simeq_D$ is meant to indicate that $l w$ and $m$ are fibrewise homotopic over $D$ via $p: C \to D$. This seems to be the strongest lifting property which could reasonably be expected in our setting.

The proof that this stronger lifting property holds proceeds in several steps. It will be convenient to temporarily call the weak equivalences $w$ with the desired property \emph{good}. So a weak equivalence $w$ will be called good if in any square as the one above with a fibration $p$ on the right, there is a map $l: B \to C$ such that $pl = n$ and $lw \simeq_D m$.

\begin{lemm}{somefactsaboutgoodwes}
\begin{enumerate}
\item[(i)] Good weak equivalences are closed under composition.
\item[(ii)] In order to show that a weak equivalence is good we only need to consider the case where the map $n$ along the bottom of the square is the identity. In other words, a weak equivalence $w: A \to B$ is good whenever for any commuting triangle
\diag{ & C \ar[d]^p \\
A \ar[r]_w \ar[ur]^k & B }
in which $p$ is a fibration, the map $p$ has a section $j$ such that $j w \simeq_B k$.
\end{enumerate}
\end{lemm}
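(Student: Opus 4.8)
The plan is to handle the two claims separately; part (i) is a short diagram chase, while all the substance is in part (ii).

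For (i), let $w\colon A\to B$ and $v\colon B\to C$ be good weak equivalences and consider a commuting square with $vw$ on the left, a fibration $p\colon E\to D$ on the right, $n\colon C\to D$ along the bottom and $m\colon A\to E$ on top, so that $pm=n(vw)$. Viewing this instead as a square with $w$ on the left and $nv$ along the bottom (which still commutes) and applying goodness of $w$, we get $l_1\colon B\to E$ with $pl_1=nv$ and $l_1w\simeq_D m$. Viewing the resulting square (now with top $l_1$) as one with $v$ on the left and applying goodness of $v$, we get $l\colon C\to E$ with $pl=n$ and $lv\simeq_D l_1$. Precomposing $lv\simeq_D l_1$ with $w$ and concatenating with $l_1w\simeq_D m$ gives $l(vw)=(lv)w\simeq_D m$, so $l$ exhibits $vw$ as good. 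The only facts used are that fibrewise homotopy over $D$ is stable under precomposition (immediate) and is transitive; transitivity holds even when the common composite to $D$ fails to be a fibration, because the fibrewise path object of $E$ in $\ct{C}(D)$ is a homotopy equivalence relation by \refprop{pathobjheq} applied inside the path category $\ct{C}(D)$ (see \refdefi{slicingforcofo}), which supplies the required concatenation operation.

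For (ii), assume $w\colon A\to B$ has the stated property and take a commuting square with $w$ on the left, a fibration $p\colon C\to D$ on the right, $n\colon B\to D$ along the bottom and $m\colon A\to C$ on top, so $pm=nw$. Pull $p$ back along $n$ to obtain a fibration $p'=n^*p\colon B\times_D C\to B$ together with the other projection $p_C\colon B\times_D C\to C$, so that $p\,p_C=n\,p'$. Since $pm=nw$, the pair $(w,m)$ defines $m'\colon A\to B\times_D C$ with $p'm'=w$ and $p_Cm'=m$, so $m'$ and $w$ form a commuting triangle over $B$ with the fibration $p'$. By hypothesis $p'$ has a section $j\colon B\to B\times_D C$ with $jw\simeq_B m'$. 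Put $l:=p_Cj\colon B\to C$. Then $pl=p\,p_Cj=n\,p'j=n$, since $p'j=1_B$, so it remains only to prove $lw\simeq_D m$, i.e. $p_Cjw\simeq_D p_Cm'$.

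This last step is the main obstacle: the homotopy $jw\simeq_B m'$ lives fibrewise over $B$, and it must be pushed forward along $p_C$ into a homotopy fibrewise over $D$. I would do this by a variant of Brown's Lemma. From the commuting square with verticals $p'$, $p$ and horizontals $p_C$, $n$, one gets the evident map $(p_C\pi_1,p_C\pi_2)\colon (B\times_D C)\times_B(B\times_D C)\to C\times_D C$. Since $\ct{C}(B)$ is a path category, the fibrewise path object $P_B(B\times_D C)$ exists; composing the above map with the projection $P_B(B\times_D C)\to (B\times_D C)\times_B(B\times_D C)$ gives the bottom edge of a square whose top edge is the composite $B\times_D C\to C\to P_D(C)$ (unit of a fibrewise path object $P_D(C)$ of $C$ over $D$), whose left edge is the weak equivalence $B\times_D C\to P_B(B\times_D C)$, and whose right edge is the fibration $(s,t)\colon P_D(C)\to C\times_D C$; commutativity follows from $(s,t)r=\Delta$ on both path objects. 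By \reflemm{lifting} this square has a lower filler $\psi\colon P_B(B\times_D C)\to P_D(C)$ with $s\psi=p_C\,s$ and $t\psi=p_C\,t$. Composing $\psi$ with a map $h\colon A\to P_B(B\times_D C)$ witnessing $jw\simeq_B m'$ then yields a witness of $p_Cjw\simeq_D p_Cm'$, i.e. $lw\simeq_D m$, so $l$ exhibits $w$ as good. Everything else — the commutativity of the auxiliary square and the elementary facts that $\simeq_D$ is an equivalence relation, closed under precomposition and under postcomposition by maps of $\ct{C}(D)$ (the latter via \refcoro{onpathobj} and \reftheo{homotopyiscongr} in $\ct{C}(D)$) — is routine bookkeeping.
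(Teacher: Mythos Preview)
Your proof is correct and follows the same overall strategy as the paper. Part (i) is verbatim the paper's argument. For part (ii) you use the same reduction via pullback along $n$ and the same candidate $l=p_Cj$; the only difference is in how you transfer the witness of $jw\simeq_B m'$ to a witness of $lw\simeq_D m$. The paper observes (via \refprop{presbypb}) that one may \emph{choose} $P_B(B\times_D C)$ to be the pullback of $P_D(C)$ along $n$, so the comparison map $P_B(B\times_D C)\to P_D(C)$ is built in. You instead fix an arbitrary $P_B(B\times_D C)$ and produce such a comparison map by a lifting against $(s,t)\colon P_D(C)\to C\times_D C$. Both approaches are valid; the paper's is slightly more direct, while yours avoids invoking preservation of weak equivalences under pullback.
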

\begin{proof}
(i): Suppose $w_1$ and $w_2$ are good weak equivalences and there is a commuting square
\diag{ Z \ar[r]^m \ar[d]_{w_1} & C \ar[dd]^p \\
Y \ar[d]_{w_2} \\
X \ar[r]_n & D }
with a fibration $p$ on the right. From the fact that $w_1$ is good we get a map $t_1: Y \to C$ such that $pt_1 = nw_2$ and $t w_1 \simeq_D m$; then, from the fact that $w_2$ is good we get a map $t: X \to C$ such that $pt = n$ and $tw_2 \simeq_D t_1$. Then $tw_2w_1 \simeq_D t_1 w_1 \simeq_D m$, so $t$ is as desired.

(ii): Suppose that
\diag{ A \ar[r]^m \ar[d]_w & C \ar[d]^ p \\
B \ar[r]_n & D }
is a commuting square with a fibration $p$ on the right. Pulling back $p$ along $n$ we obtain a diagram of the form
\diag{ & E \ar[d]^{p'} \ar[r]^{n'} & C \ar[d]^p \\
A \ar[r]_w \ar[ur]^{(w, m)} & B \ar[r]_n & D}
in which $E = B \times_D C$ and $p'$ is a fibration. Note that \refprop{presbypb} implies that we can obtain a path object for $E$ in $\ct{C}(B)$ by pulling back the path object for $C$ in $\ct{C}(D)$ along $n: B \to D$, as in
\diag{ E \ar[r] \ar[d] & C \ar[d] \\
P_B(E) \ar[r] \ar[d] & P_D(C) \ar[d] \\
E \times_B E \ar[r] \ar[d] & C \times_D C \ar[d] \\
B \ar[r] & D,}
with all squares being pullbacks.

Now suppose that $w$ is a weak equivalence with the property formulated in the lemma. This means that $p'$ has a section $j$ such that $jw \simeq_B (w, m)$, as witnessed by some homotopy $A \to P_B(E)$. Composing this homotopy with the map $P_B(E) \to P_D(C)$ above we obtain a homotopy witnessing that $n'jw \simeq_D n'(w, m) = m$. So putting $l := n'j$, we have $pl = pn'j=np'j= n$ and $lw = n'jw \simeq_D m$, as desired.
\end{proof}

Our strategy for showing that any weak equivalence is good is to use the factorisation of any weak equivalence as a weak equivalence of the form $w_f: Y \to P_f$ followed by an acyclic fibration $p_f: P_f \to Y$. So once we have shown that any weak equivalence of the form $w_f: Y \to P_f$ is good and any acyclic fibration is good, we are done in view of part (i) of the previous lemma. We do the latter thing first.

\begin{prop}{onacyclicfibrations}
A fibration $f: B \to A$ is acyclic precisely when it has a section $g: A \to B$ with $gf \simeq_A 1_B$.
\end{prop}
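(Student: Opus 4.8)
The plan is to prove both implications by working in the slice path category $\ct{C}(A)$ of \refdefi{slicingforcofo}. In $\ct{C}(A)$ the objects $(B,f)$ and $(A,1_A)$ are present, $f$ becomes a fibration between them, and --- crucially --- the fibrewise homotopy relation $\simeq_A$ of \refdefi{fibrewisehom} is precisely the homotopy relation of the path category $\ct{C}(A)$. Since fibrations, weak equivalences and sections in $\ct{C}(A)$ are exactly those of $\ct{C}$, all the general machinery for path categories applies verbatim inside $\ct{C}(A)$.

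For the direction from left to right, suppose $f$ is an acyclic fibration. Then in $\ct{C}(A)$ the map $f\colon(B,f)\to(A,1_A)$ is both a fibration and a weak equivalence, and by axiom (5) it has a section $g\colon(A,1_A)\to(B,f)$, so that $g$ is a weak equivalence in $\ct{C}(A)$ by 2-out-of-3 (since $fg=1_A$ is one). Now we run the first paragraph of the proof of \reftheo{weisheq} inside $\ct{C}(A)$: choosing a path object $r\colon B\to P_A(B)$, $(\sigma,\tau)\colon P_A(B)\to B\times_A B$ for $(B,f)$ in $\ct{C}(A)$, a lower filler for the square
\diag{ A \ar[d]_g \ar[r]^{rg} & P_A(B) \ar[d]^{(\sigma, \tau)} \\
B \ar[r]_(.45){(gf, 1_B)} & B \times_A B }
(which commutes, both composites equalling $(g,g)$ since $fg=1_A$) exists by \reflemm{lifting} applied in $\ct{C}(A)$, and it is exactly a fibrewise homotopy $gf\simeq_A 1_B$.

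For the direction from right to left, suppose $f$ is a fibration equipped with a section $g$ and a fibrewise homotopy $gf\simeq_A 1_B$. Read in $\ct{C}(A)$, this says that $g$ is a two-sided homotopy inverse of $f$ --- we have $fg=1_A$ on the nose and $gf\simeq 1_B$ in $\ct{C}(A)$ --- so $f$ is a homotopy equivalence in $\ct{C}(A)$; by \reftheo{weisheq} applied to the path category $\ct{C}(A)$ it is therefore a weak equivalence in $\ct{C}(A)$, hence a weak equivalence in $\ct{C}$, and together with the hypothesis that $f$ is a fibration this shows $f$ is acyclic. (One can also avoid \reftheo{weisheq} here: writing the fibrewise homotopy as $h\colon B\to P_A(B)$ with $\sigma h=gf$ and $\tau h=1_B$, one reads off from $\tau h=1_B$ that $h$, and then from $\sigma h=gf$ that $gf$, is a weak equivalence; since $fg=1_A$ is one too, 2-out-of-6 applied to $B\xrightarrow{f}A\xrightarrow{g}B\xrightarrow{f}A$ makes $f$ a weak equivalence.)

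There is no real obstacle in this argument; the only thing that requires a little attention is the translation between $\ct{C}$ and its slice $\ct{C}(A)$ --- in particular the identification of $\simeq_A$ with ordinary homotopy in $\ct{C}(A)$ and of the fibrations/weak equivalences/sections of the slice with those of $\ct{C}$ --- which is what licenses invoking \reflemm{lifting} and \reftheo{weisheq} in the slice rather than reproving them.
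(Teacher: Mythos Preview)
Your proof is correct and follows essentially the same route as the paper: for the forward direction both you and the paper use axiom (5) to get a section, observe it is a weak equivalence, and then apply \reflemm{lifting} to the very same square (with $P_A(B)$ and $B\times_A B$) to obtain the fibrewise homotopy; for the backward direction both invoke \reftheo{weisheq}. The only difference is presentational: you systematically frame everything as taking place in the slice path category $\ct{C}(A)$, whereas the paper argues more directly in $\ct{C}$ (implicitly using that $\simeq_A$ refines $\simeq$ for the backward direction, and writing down the $P_A(B)$-square by hand for the forward one).
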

\begin{proof}
If a fibration $f: B \to A$ has a section $g: A \to B$ with $gf \simeq_A1_B$, then $g$ is a homotopy inverse. So $f$ is a weak equivalence by \reftheo{weisheq}.

Conversely, if $f: B \to A$ is an acyclic fibration, then it has a section $g: A \to B$. From 2-out-of-3 for weak equivalences and $fg = 1_A$ it follows that $g$ is a weak equivalence. Therefore
\diag{ A \ar[d]_g \ar[r]^{rg} & P_A(B) \ar[d]^{(s, t)} \\
B \ar[r]_(.35){(gf, 1)} & B \times_A B }
is a commuting square with a weak equivalence on the left and a fibration on the right. A lower filler for this diagram is a fibrewise homotopy showing that $gf \simeq_A 1_B$.
\end{proof}

\begin{coro}{acyclicfibrgood}
Acyclic fibrations are good.
\end{coro}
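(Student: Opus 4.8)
The plan is to combine the triangle reformulation of goodness from \reflemm{somefactsaboutgoodwes}(ii) with the characterisation of acyclic fibrations in \refprop{onacyclicfibrations}. So let $f \colon B \to A$ be an acyclic fibration. By \reflemm{somefactsaboutgoodwes}(ii) it suffices to show that for every commuting triangle
\diag{ & C \ar[d]^p \\ B \ar[r]_f \ar[ur]^k & A }
in which $p$ is a fibration, the map $p$ has a section $j \colon A \to C$ with $jf \simeq_A k$, the homotopy being fibrewise over $A$ (via $p$ on the target side).

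First I would apply \refprop{onacyclicfibrations} to the acyclic fibration $f$: it has a section $g \colon A \to B$ together with a fibrewise homotopy $gf \simeq_A 1_B$ over $A$. Then I would simply put $j := kg \colon A \to C$. Since $pk = f$ and $fg = 1_A$, we get $pj = pkg = fg = 1_A$, so $j$ is indeed a section of $p$. It then only remains to check that $jf = kgf \simeq_A k$.

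The point is that this last step takes place entirely inside the slice path category $\ct{C}(A)$ of \refdefi{slicingforcofo}: the objects $(B,f)$ and $(C,p)$ live there, $g$ is a morphism $(A,1_A) \to (B,f)$, $k$ is a morphism $(B,f) \to (C,p)$, and $gf \simeq_A 1_B$ is precisely the homotopy relation of $\ct{C}(A)$ (the relevant composites with $f$ and $p$ are fibrations). Since the homotopy relation is a congruence by \reftheo{homotopyiscongr} applied to $\ct{C}(A)$, post-composing the homotopy $gf \simeq_A 1_B$ with $k$ yields $kgf \simeq_A k \cdot 1_B = k$; concretely, if $h \colon B \to P_A(B)$ witnesses $gf \simeq_A 1_B$ and $P_A(k) \colon P_A(B) \to P_A(C)$ is the induced map on path objects from \refcoro{onpathobj} (taken in $\ct{C}(A)$), then $P_A(k) \circ h$ witnesses $kgf \simeq_A k$. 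This gives the required section and homotopy, so $f$ is good. I do not expect any real obstacle here: the only thing to be careful about is that the homotopy must be fibrewise over $A$, and this is automatic once one argues inside $\ct{C}(A)$ throughout.
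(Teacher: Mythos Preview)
Your proof is correct and follows essentially the same approach as the paper: use \reflemm{somefactsaboutgoodwes}(ii), take the section provided by \refprop{onacyclicfibrations}, put $j := kg$, and conclude $jf = kgf \simeq_A k$ from $gf \simeq_A 1_B$. Your only difference is that you spell out the final congruence step in $\ct{C}(A)$ more explicitly, whereas the paper simply asserts $kaw \simeq_B k$ (in its notation) without further comment.
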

\begin{proof}
We will use part (ii) of \reflemm{somefactsaboutgoodwes}. So suppose we are given a commuting triangle of the form
\diag{ & C \ar[d]^p \\
A \ar[r]_w \ar[ur]^k & B}
in which $p$ is a fibration and $w$ is an acyclic fibration. The previous proposition tells us that there is a map $a: B \to A$ such that $wa = 1_B$ and $aw \simeq_B 1_A$. But then $j := ka$ is a section of $p$ with $jw = kaw \simeq_B k$.
\end{proof}

To show that weak equivalences of the form $w_f: Y \to P_f$ are good, it will be useful to introduce a bit of terminology.
\begin{defi}{SDR}
A morphism $f: A \to B$ is a \emph{strong deformation retract} if there are a map $g: B \to A$, a path object $PB$ for $B$ and a homotopy $h: B \to PB$ such that
\[ gf = 1_A, \quad sh = fg, \quad th = 1, \quad \mbox{ and } \quad hf = rf. \]
\end{defi}

The reason is the following:
\begin{lemm}{usefullemma}
Strong deformation retracts are good weak equivalences.
\end{lemm}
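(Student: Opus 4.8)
The plan is to reduce to part (ii) of \reflemm{somefactsaboutgoodwes}: it then suffices, given a strong deformation retract $f : A \to B$ (with retraction $g$, path object $PB$ on $B$, and homotopy $h : B \to PB$ satisfying $gf = 1_A$, $sh = fg$, $th = 1_B$ and $hf = rf$) and a commuting triangle
\diag{ & C \ar[d]^p \\ A \ar[r]_f \ar[ur]^k & B }
with $p$ a fibration, to produce a section $j : B \to C$ of $p$ with $jf \simeq_B k$. Observe first that $f$ is indeed a weak equivalence, since $g$ is a homotopy inverse for it and we may invoke \reftheo{weisheq}.

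The construction of $j$ is by transporting $k$ along the homotopy $h$ that witnesses $fg \simeq 1_B$. Using the \emph{given} path object $PB$ to form the pullback $P_p = C \times_B PB$, with projections $p_1, p_2$ and $p_p = t p_2$, \reftheo{fibrwithtransport} supplies a transport structure $\Gamma : P_p \to C$ with $p\Gamma = p_p$ and $\Gamma w_p \simeq_B 1_C$, where $w_p = (1_C, rp)$. Since $p(kg) = (pk)g = fg = sh$, the pair $(kg, h) : B \to P_p$ is well-defined, and we set $j := \Gamma (kg, h)$. Then $pj = p_p(kg,h) = t p_2 (kg, h) = th = 1_B$, so $j$ is a section of $p$.

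To verify $jf \simeq_B k$, one uses $gf = 1_A$ and $hf = rf$ to compute the two components of $(kg, h)\circ f : A \to P_p$ and identify them with those of $w_p \circ k$, so that $(kg,h)f = w_p k$ and hence $jf = \Gamma w_p k$. Precomposing the witnessing map $C \to P_B(C)$ of the fibrewise homotopy $\Gamma w_p \simeq_B 1_C$ with $k$ then shows $\Gamma w_p k \simeq_B k$, and combining these gives $jf \simeq_B k$, as required. Along the way one checks routinely, using $sr = tr = 1_B$, that all the instances of $\simeq_B$ are meaningful in the sense of \refdefi{fibrewisehom}, i.e. that the relevant composites with $p$ agree.

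The one point that needs care — and essentially the only place where a careless argument would break down — is that the transport structure must be taken with respect to the \emph{same} path object $PB$ that is part of the strong deformation retract data. Only then does $(kg, h)$ land literally in $P_p$ and, more importantly, only then does the identity $hf = rf$ yield $(kg,h)f = w_p k$ strictly rather than merely up to homotopy; with an unrelated path object on $B$ one loses the precise control needed to conclude the \emph{fibrewise} homotopy $jf \simeq_B k$. Everything else is bookkeeping with the defining equations of a strong deformation retract and of a transport structure.
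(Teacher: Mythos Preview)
Your proof is correct and follows essentially the same approach as the paper's own argument: reduce to part (ii) of \reflemm{somefactsaboutgoodwes}, form the map $(kg,h):B\to P_p$, verify $(kg,h)f = w_p k$ via $gf=1_A$ and $hf=rf$, and set $j = \Gamma(kg,h)$ for a transport structure $\Gamma$ on $p$. Your additional remark that the transport must be taken with respect to the \emph{same} path object $PB$ appearing in the strong deformation retract data is a useful clarification that the paper leaves implicit.
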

\begin{proof}
Let $f: A \to B$ be a strong deformation retract and $g$ and $h$ be as in the definition. Strong deformation retracts are clearly homotopy equivalences, so they are weak equivalences as well. To show that they are also good, suppose that
\diag{ & C \ar[d]^q \\
A \ar[r]_f \ar[ur]^k & B }
is a commutative triangle in which $q$ is a fibration; we need to find a map $j$ such that $qj = 1_B$ and $jf \simeq_B k$. To this purpose, consider the following diagram:
\diag{ A \ar[r]^k \ar[d]_f & C \ar[r]^1 \ar[d]_{w_ q} & C \ar[d]^q \\
B \ar[r]_(.45){(kg, h)}   & P_q \ar[r]_{p_q}   \ar@{.>}[ur]_{\Gamma_q} & B. }
The left-hand square commutes, as
\[ (kg,h)f = (kgf, hf) = (k, rf) = (k, rqk) = (1,rq)k = w_q k.\]
Moreover, the arrows along the bottom compose to the identity on $B$, because $p_q(kg, h) = tp_2(kg, h) = th = 1$. This means that we can use the transport structure on $q$ with $q \Gamma_q = p_q$ and $\Gamma_q w_q \simeq_B 1$ to define $j$ as $\Gamma_q(kg, h)$.
\end{proof}

So it remains to show:
\begin{prop}{wfSDR}
For any morphism $f: Y \to X$ the weak equivalence $w_f: Y \to P_f$ is a strong deformation retract.
\end{prop}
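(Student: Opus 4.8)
The plan is to use $g := p_1 : P_f \to Y$ as the retraction: since $p_1 w_f = 1_Y$ this already yields $g w_f = 1_Y$, so everything comes down to producing a path object $P(P_f)$ on $P_f$ together with a homotopy $h : P_f \to P(P_f)$ satisfying $s h = w_f p_1$, $t h = 1_{P_f}$ and, crucially, $h w_f = r w_f$ on the nose. Geometrically, $h$ should be the homotopy that keeps the $Y$-coordinate of a point $(y,\alpha) \in P_f = Y \times_X PX$ fixed and shrinks the path-coordinate $\alpha$ back along its source to the constant path $r f(y) = r(d_0\alpha)$; the key point is that when $\alpha$ is already this constant path the shrinking is literally constant, and that is what will give the strict identity $h w_f = r w_f$.

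To produce $P(P_f)$ and $h$ I would use the transport and connection machinery developed above. One route is to apply \reftheo{fibrwithtransport} and \reftheo{existenceconnections} to a path fibration of $PX$ (for instance $d_0 : PX \to X$), obtaining a path object $P(PX)$ equipped with a connection, and then to build $P(P_f)$ as a fibre product of the form $PY \times_{PX} P(PX)$ for a suitable path object $PY$ on $Y$ and a compatible map $PY \to PX$; here the one nontrivial point in checking that this really is a path object is that $(s,t) : P(P_f) \to P_f \times P_f$ is a fibration, which is handled by the manoeuvre from the proof of \reftheo{existenceconnections} (exhibit the same object as two pullbacks, one of which displays the map to $P_f \times_Y P_f \cong Y \times_X (PX \times_X PX)$ as a pullback of a fibration, while $P_f \times_Y P_f \to P_f \times P_f$ is dispatched via product projections). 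Alternatively one can feed $p_1 : P_f \to Y$ itself into \reftheo{existenceconnections}, which yields a bona fide path object $P(P_f)$ directly. In either case the homotopy $h$ is then assembled from the connection and transport data, and one checks $s h = w_f p_1$ and $t h = 1_{P_f}$ by direct computation.

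The hard part is the strict identity $h w_f = r w_f$. It is tempting to obtain $h$ as a mere lower filler: the square with $w_f$ on the left, $(s,t)$ on the right, bottom edge $(w_f p_1, 1_{P_f})$ and top edge $r w_f$ commutes (using $p_1 w_f = 1_Y$ and $(s,t) r = \Delta_{P_f}$), so \reflemm{exofweakfillers} supplies an $h$ with $s h = w_f p_1$, $t h = 1_{P_f}$ and $h w_f \simeq r w_f$ — but only up to homotopy, whereas \refdefi{SDR} demands equality, and this strictness is precisely what is exploited afterwards in \reflemm{usefullemma}. So \reflemm{exofweakfillers} is not enough; instead $P(P_f)$ and $h$ must be chosen explicitly so that pre-composing $h$ with $w_f$ — which replaces $\alpha$ by the reflexivity path $r f(y)$ — makes the connection-generated contraction collapse to $r$ itself, rather than merely to something homotopic to it. Verifying that this collapse is an equality is the one genuinely delicate computation, and it (together with the fibration check for $(s,t)$ in the fibre-product approach) is where essentially all the work of the proof lies.
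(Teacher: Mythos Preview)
Your overall plan is right --- take $g=p_1$, and the whole proof reduces to exhibiting a specific path object $P(P_f)$ together with a homotopy $h$ satisfying the four equations of \refdefi{SDR} \emph{strictly}. You have also correctly diagnosed that a generic lower filler only gives $hw_f \simeq rw_f$, which is not enough.

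Where you diverge from the paper is in how to build $P(P_f)$ and $h$. You reach for the transport/connection machinery of \reftheo{fibrwithtransport} and \reftheo{existenceconnections}, but that machinery is calibrated to produce data satisfying equations \emph{up to fibrewise homotopy}: transport structures are only unique up to fibrewise homotopy, and the $r$-map on the path object produced by \reftheo{existenceconnections} is $(w_f,h')$ for a \emph{chosen} witness $h'$ of $\Gamma w_f \simeq 1$. Extracting a strict identity $hw_f = rw_f$ downstream of such choices would require tracking them very carefully, and you do not indicate how. Your ``delicate computation'' is not merely delicate; along this route it is genuinely unclear how to complete it without in effect reinventing the paper's argument.

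The paper avoids all of this by writing down $P(P_f)$ by hand, with no appeal to transport or connections. It takes $P(P_f) := PX \times_X PY \times_X PX$, the pullback of $(s,t):PY\to Y\times Y$ along $p_1\times p_1:P_f\times P_f\to Y\times Y$; a point is a triple $(\alpha:f(y)\to x,\;\gamma:y\to y',\;\alpha':f(y')\to x')$, with $\sigma(\alpha,\gamma,\alpha')=(y,\alpha)$, $\tau(\alpha,\gamma,\alpha')=(y',\alpha')$ and $\rho(y,\alpha)=(\alpha,r(y),\alpha)$. The homotopy is then the explicit formula
\[
h := (rfp_1,\,rp_1,\,p_2),\qquad h(y,\alpha)=(rf(y),\,r(y),\,\alpha),
\]
and all four equations are one-line checks; in particular $hw_f(y)=(rf(y),r(y),rf(y))=\rho w_f(y)$ on the nose. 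The only nontrivial verifications are that $(\sigma,\tau)$ is a fibration (immediate: it is a pullback of $(s,t):PY\to Y\times Y$) and that $\rho$ is a weak equivalence (because $\sigma$ is a pullback of an acyclic fibration and $\sigma\rho=1$).

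So the moral is: do not route through \reftheo{existenceconnections}. Write $P(P_f)$ explicitly as a pullback over $PY$ and define $h$ by a formula built only from projections and $r$-maps; the strict equality $hw_f=rw_f$ then comes for free.
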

\begin{proof} The main difficulty is to find a suitable path object for $P_f$. What we will do is take the following pullback:
\diag{ PX \times_X PY \times_X PX \ar[r] \ar[d]_{(\sigma, \tau)} & PY \ar[d]^{(s, t)} \\
P_f \times P_f \ar[r]_{p_1 \times p_1} & Y \times Y,}
where $\sigma$ and $\tau$ intuitively take a triple $(\alpha: f(y) \to x, \gamma: y \to y', \alpha': f(y') \to x')$ and produce $(y, \alpha)$ and $(y', \alpha')$, respectively. By construction $(\sigma, \tau)$ is a fibration. The reflexivity term $\rho: P_f \to PX \times_X PY \times_X PX$ is given by $(p_2, rp_1, p_2)$; in other words, by sending $(y, \alpha: f(y) \to x)$ to $(\alpha, r(y), \alpha)$. We have $\sigma\rho = \tau\rho = 1$, so to show that $\rho$ is a weak equivalence, it suffices to show this for $\sigma$; this map, however, is the pullback of the map on the left in
\diag{ PY \times_X PX \ar[rr] \ar[d] & & PX \ar[d]^s \\
PY \ar[r]_t \ar[d]_s & Y \ar[r]_f & X \\
Y }
along $p_1: P_f \to Y$ and hence an acyclic fibration. So we have described a suitable candidate for $PP_f$.

We know that $p_1 w_f = 1$, so to prove that $w_f$ is a strong deformation retract we need to find a homotopy $h: P_f \to PP_f$ such that $\sigma h = w_f p_1$, $\tau h =  1$ and $hw_f = \rho w_f$. We set
\[ h := (rfp_1, rp_1, p_2). \]
Then we can compute:
\[ \sigma h = \sigma(rfp_1, rp_1, p_2) = (srp_1, rfp_1) = (1, rf)p_1 = w_fp_1 \]
and
\[ \tau h = \tau(rfp_1, rp_1, p_2) = (trp_1, p_2) = (p_1, p_2) = 1. \]
In addition, the equations
\[ hw_f = (rfp_1, rp_1, p_2)(1, rf) = (rf, r, rf) \]
and
\[ \rho w_f = (p_2, rp_1, p_2)(1, rf) = (rf, r, rf) \]
hold, showing that $hw_f = \rho w_f$.
\end{proof}

We conclude that every weak equivalence is good, which we formulate more explicitly as follows.

\begin{theo}{allgood}
If
\diag{ A \ar[r]^m \ar[d]_f & C \ar[d]^ p \\
B \ar[r]_n & D }
is a commutative square with a weak equivalence $f$ on the left and a fibration $p$ on the right, then there is a filler $l: B \to C$ such that $n = pl$ and $l f \simeq_D m$. Moreover, such a filler is unique up to fibrewise homotopy over $D$.
\end{theo}
\begin{proof}
Any weak equivalence $f: A \to B$ can be factored as $w_f: A \to P_f$ followed by an acyclic fibration $p_f: P_f \to B$. The former is good by \reflemm{usefullemma} and \refprop{wfSDR}, while the latter is good by \refcoro{acyclicfibrgood}; so $f$ is good by part (i) of \reflemm{somefactsaboutgoodwes}.

It remains to show uniqueness of $l$: but if both $l$ and $l'$ are as desired, then $lf \simeq_D l'f$, so there is a fibrewise homotopy $h: A \to P_D(C)$ such that
\diag{ A \ar[r]^(.4){h} \ar[d]_f & P_D(C) \ar[d]^{(s, t)} \\
B \ar[r]_(.35){(l, l')} & D \times_C D }
commutes. A lower filler for this square is a fibrewise homotopy showing that $l \simeq_D l'$.
\end{proof}

We are now able to prove that the factorisations of maps as weak equivalences followed by fibrations are unique up to homotopy equivalence.

\begin{coro}{uniquenessoffact}
If a map $k: Y \to X$ can be written as $k = p a = q b$ where $a: Y \to A$ and $b: Y \to B$ are weak equivalences and $p: A \to X$ and $q: B \to X$ are fibrations, then $A$ and $B$ are homotopy equivalent; moreover, the homotopy equivalence $f: A \to B$ and homotopy inverse $g: B \to A$ can be chosen such that $qf = p$, $pg = q$, $fa \simeq_X b$, $gb \simeq_X a$, $gf \simeq_X 1$ and $fg \simeq_X 1$.
\end{coro}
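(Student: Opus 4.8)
The plan is to read off $f$ and $g$ from \reftheo{allgood} and then use the uniqueness clause of that theorem to identify the round trips $gf$ and $fg$ with the identities. Throughout, $\simeq_X$ denotes fibrewise homotopy over $X$ in the sense of \refdefi{fibrewisehom}, formed via the fibration $p: A \to X$ or $q: B \to X$ as appropriate; the hypothesis that $p$ and $q$ are fibrations is precisely what makes these relations available, even though $k: Y \to X$ itself need not be one.

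First I would apply \reftheo{allgood} to the square with $a: Y \to A$ on the left, $q: B \to X$ on the right, $b$ along the top and $p$ along the bottom; it commutes because $pa = k = qb$, the left map is a weak equivalence and the right map is a fibration, so the theorem produces $f: A \to B$ with $qf = p$ and $fa \simeq_X b$. Applying \reftheo{allgood} to the mirror-image square (with $b$ on the left, $p$ on the right, $a$ along the top, $q$ along the bottom) yields $g: B \to A$ with $pg = q$ and $gb \simeq_X a$. This already supplies four of the six asserted relations.

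To obtain $gf \simeq_X 1_A$, observe that $gf$ and $1_A$ are both fillers of the commuting square with $a$ on the left, $p$ on the right, $a$ along the top and $p$ along the bottom: indeed $p(gf) = (pg)f = qf = p = p \circ 1_A$, while $1_A \circ a = a$ and $(gf)a = g(fa) \simeq_X gb \simeq_X a$. The last chain uses that $\simeq_X$ is transitive and is preserved under postcomposition with $g$; these are instances of the general path-category facts \refprop{pathobjheq}, \reflemm{PXleastheqrel} and \refcoro{onpathobj} applied inside $\ct{C}(X)$, whose proofs go through for an arbitrary domain and in particular for $Y$, which need not lie over $X$ fibrantly. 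The uniqueness clause of \reftheo{allgood} then gives $gf \simeq_X 1_A$, and symmetrically $fg \simeq_X 1_B$.

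Finally, a fibrewise homotopy over $X$ is in particular an ordinary homotopy: a lower filler (\reflemm{lifting}) for the reflexivity map $A \to P_X(A)$ against $(s,t): PA \to A \times A$ produces a comparison $P_X(A) \to PA$ respecting source and target, which converts a witness of $u \simeq_X v$ into a witness of $u \simeq v$. Hence $gf \simeq 1_A$ and $fg \simeq 1_B$, so $f$ is a homotopy equivalence with homotopy inverse $g$, and all six compatibilities hold. The only genuine work is the bookkeeping of the third paragraph — confirming that the fibrewise homotopy relation over the possibly non-fibrant base $X$ has the usual formal properties — and I expect that (mild) point to be the main obstacle; the rest is a direct appeal to \reftheo{allgood}.
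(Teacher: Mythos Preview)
Your proposal is correct and matches the paper's intended approach: the corollary is stated without proof as an immediate consequence of \reftheo{allgood}, and your derivation (construct $f$ and $g$ as fillers, then invoke the uniqueness clause to identify $gf$ and $fg$ with the identities) is exactly the reconstruction the reader is meant to carry out. Your attention to the one delicate point---that the congruence and transitivity of $\simeq_X$ must be checked for a domain $Y$ that need not be fibrant over $X$---is well placed; the relevant structure lives on $P_X(A)$ and $P_X(B)$ in $\ct{C}(X)$, so these properties transfer to arbitrary domains as you indicate.
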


This means in particular that any two path objects on an object $X$ are homotopy equivalent, where the homotopy equivalence and inverse can be chosen to behave nicely with respect to the $r,s,t$-maps, as in the statement of \refcoro{uniquenessoffact}.

\section{Homotopy exact completion}

\subsection{Exactness.} This section will be devoted to developing a notion of exact completion for path categories, generalising the exact completion of a category with finite limits, as in \cite{carboni95,carboniceliamagno82}. In fact, this homotopy exact completion, as we will call it, will coincide with the ordinary exact completion if we regard a category with finite limits as a path category in which every morphism is a fibration and only the isomorphisms are weak equivalences. Another feature of our account is that the category of setoids, studied in the type-theoretic literature (see, for example, \cite{hofmann97,bartheetal03}), is the homotopy exact completion of the syntactic category of type theory.

Initially, we will study this homotopy exact completion directly; in later stages we will use that it can also be obtained as the homotopy category of an intermediate path category (see \reftheo{hexcatoffibrantobj} below).

\begin{defi}{hex} Given a path category \ct{C} one may construct a new category as follows. Its objects are the homotopy equivalence relations, as defined in \refdefi{hequivalencerel}. A morphism from $(X, \rho: R \to X \times X)$ to $(Y, \sigma: S \to Y \times Y)$ is an equivalence class of morphisms $f: X \to Y$ for which there is a map $\varphi: R \to S$ making the square
\diag{ R \ar[d]_\rho \ar@{.>}[r]^\varphi & S \ar[d]^\sigma \\
X \times X \ar[r]_{f \times f} & Y \times Y }
commute; two such morphisms  $f: X \to Y$ and $g: X \to Y$ are identified in case there is a map $H: X \to S$ such that the triangle
\diag{ & S \ar[d]^\sigma \\
X \ar[r]_(.4){(f, g)} \ar@{.>}[ru]^H & Y \times Y }
commutes. This new category will be called the \emph{homotopy exact completion} of $\ct{C}$ and will be denoted by ${\rm Hex}(\ct{C})$.
\end{defi}

\begin{rema}{onnotofarrows} In what follows we will often denote objects of ${\rm Hex}(\ct{C})$ as pairs $(X, R)$, leaving the fibration $\rho: R \to X \times X$ implicit. If it is made explicit, then $\rho_1$ and $\rho_2$ denote the first and second projection $R \to X$, respectively. Also, we will not distinguish notationally between a morphism $f: X \to Y$ in \ct{C} which represents a morphism $(X, R) \to (Y, S)$ in ${\rm Hex}(\ct{C})$ and the morphism thus represented; we do not expect that these conventions will lead to confusion.
\end{rema}

\begin{rema}{onfillersinhex} In this definition we have asked for the existence of fillers making the diagrams commute strictly; however, in view of \refprop{fillersuptohomotopy}, it suffices if there are dotted arrows making the diagrams commute up to homotopy.
\end{rema}

Our first task is to show that ${\rm Hex}(\ct{C})$ is an exact category: that is, it is a regular category with well-behaved quotients of equivalence relations. For the convenience of the reader, we recall these notions here. (For more information we refer to part A of \cite{johnstone02a}, where exact categories are called effective regular.)
\begin{defi}{regularcat}
Let \ct{E} be a category. A map $f: B \to A$ in \ct{E} is a \emph{cover} if the only subobject of $A$ through which it factors is the maximal one given by the identity on $A$. A category \ct{C} is \emph{regular} if it has all finite limits, every morphism in \ct{C} factors as a cover followed by a mono and covers are stable under pullback.
\end{defi}

In a regular category a map is a cover iff it is a regular epi (meaning that it arises as a coequalizer) iff it is the coequalizer of its kernel pair (see \cite[Proposition A1.3.4]{johnstone02a}).

\begin{defi}{eqrel}
A subobject $R \subseteq X \times X$ is an \emph{equivalence relation} if for any object $P$ in \ct{E} the image of the induced map
\[ {\rm Hom}(P, R) \to {\rm Hom}(P, X) \times {\rm Hom}(P, X) \]
is an equivalence relation on ${\rm Hom}(P, X)$. In a regular category a \emph{quotient} of an equivalence relation is a cover $X \to Q$ such that
\diag{R \ar[r] \ar[d] & X \ar[d] \\ X \ar[r] & Q }
is a pullback. (Hence $\xymatrix{R \ar@<.5ex>[r] \ar@<-.5ex>[r] & X}$ is the kernel pair of $X \to Q$ and the latter is the coequalizer of the former.) A regular category \ct{E} is \emph{exact} if every equivalence relation in \ct{E} has a quotient.
\end{defi}

For showing that ${\rm Hex}(\ct{C})$ is an exact category, it will be convenient to introduce some notation. If $f: X \to Y$ is any map and $\sigma: S \to Y \times Y$ is a homotopy equivalence relation, then the pullback
\diag{ P \ar[r] \ar[d] & S \ar[d]^\sigma \\
X \times X \ar[r]_{f \times f} & Y \times Y }
is a homotopy equivalence relation on $X$, which will be denoted by $f^*\sigma: f^* S \to X \times X$. Moreover, if $R \to X \times X$ and $S \to X \times X$ are two homotopy equivalence relations, then $R \cap S$ is the homotopy equivalence relation obtained by taking the following pullback:
\diag{ R \cap S \ar[r] \ar[d] & S \ar[d] \\
R \ar[r] & X \times X.}

\begin{lemm}{hexcartesian}
The category ${\rm Hex}(\ct{C})$ has finite limits.
\end{lemm}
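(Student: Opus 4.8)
The plan is to verify the two ingredients needed for finite limits: a terminal object and binary pullbacks (equivalently, products and equalizers). The terminal object is immediate: take $(1, 1\times 1)$ where the homotopy equivalence relation is the identity fibration $1 \to 1\times 1 \cong 1$; any object $(X,R)$ has a unique morphism to it since $X \to 1$ is the only map and any two parallel maps into $1$ are trivially identified.

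For binary products, I would take $(X, R) \times (Y, S) := (X\times Y, R \times S)$, where the relation on $X\times Y$ is the composite $R\times S \to (X\times X)\times(Y\times Y) \cong (X\times Y)\times(X\times Y)$. This is a fibration, being a product of fibrations, and one checks conditions (1)--(3) of \refdefi{hequivalencerel} componentwise using $\rho_R \times \rho_S$, $\sigma_R\times\sigma_S$, and $\tau_R\times\tau_S$ (the last after identifying $(R\times S)\times_{X\times Y}(R\times S) \cong (R\times_X R)\times(S\times_Y S)$, which holds because products commute with pullbacks). The projections are represented by the evident maps in \ct{C}, and the universal property is routine: given $f\colon Z \to X$ and $g\colon Z\to Y$ representing morphisms out of $(Z,T)$, the pair $(f,g)\colon Z \to X\times Y$ represents the required map, with the tracking map $T \to R\times S$ assembled from the two given tracking maps; uniqueness up to the equivalence relation on hom-sets follows since a homotopy into $R\times S$ is a pair of homotopies into $R$ and $S$.

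For equalizers of a parallel pair $f, g\colon (X,R) \to (Y,S)$ with tracking maps $\varphi, \psi\colon R \to S$, the idea is to form the subobject of $X$ on which $f$ and $g$ become ``$S$-equivalent''. Concretely I would take the homotopy pullback-style object $E := (f,g)^*S$, i.e.\ the pullback of $\sigma\colon S \to Y\times Y$ along $(f,g)\colon X \to Y\times Y$, which carries a projection $e\colon E \to X$; this $e$ need not be a fibration, so one factors it as a weak equivalence followed by a fibration $p_e\colon P_e \to X$ and takes the object to be $(P_e, p_e^* R)$ with the obvious tracking data, the map to $(X,R)$ being represented by $p_e$. The key points to check are that $f p_e$ and $g p_e$ are identified in ${\rm Hex}(\ct{C})$ — witnessed by the composite $P_e \to E \to S$, using \refrema{onfillersinhex} to work up to homotopy — and the universal property: any $h\colon (Z,T)\to(X,R)$ with $fh$ and $gh$ identified comes with a map $H\colon Z \to S$ over $(fh,gh)$, hence a map $Z \to E$ over $h$; since $Z\to 1$ is a fibration, \reflemm{lifting} (or rather the up-to-homotopy lifting of \refprop{fillersuptohomotopy}) lets us lift this through the weak equivalence $E \to P_e$ to get the required factorization through $(P_e, p_e^*R)$, and uniqueness follows from the tracking-homotopy argument as before.

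The main obstacle I expect is the equalizer, specifically the bookkeeping around the fact that $e\colon E \to X$ is not a fibration and so the candidate equalizer object must be its fibrant replacement; one has to be careful that $p_e^*R$ is genuinely a homotopy equivalence relation on $P_e$ (it is, as the pullback of one along any map, using \refprop{presbypb} that pullback along a fibration preserves fibrations and that $\rho,\sigma,\tau$ pull back) and that all the required squares commute up to homotopy rather than on the nose, invoking \refrema{onfillersinhex} and \refprop{fillersuptohomotopy} at each step. The product is comparatively painless, amounting to the observation that fibrations, pullbacks, and the structure maps of homotopy equivalence relations all behave well under finite products.
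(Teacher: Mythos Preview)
Your argument is correct, but the paper takes a shorter route: it constructs pullbacks directly rather than decomposing into products and equalizers. Given $f:(Y,S)\to(X,R)$ and $g:(Z,T)\to(X,R)$, the paper lets $W$ be the pullback of $R\to X\times X$ along $f\times g:Y\times Z\to X\times X$, and lets $Q$ be the restriction to $W$ of the homotopy equivalence relation $\pi_1^*S\cap\pi_2^*T$ on $Y\times Z$. This single construction handles products (take $(X,R)=(1,1)$) and equalizers simultaneously, and avoids the separate bookkeeping you do for each. Your product agrees with this special case, and your equalizer is essentially the paper's pullback of $(f,g):(X,R)\to(Y\times Y,S\times S)$ against the diagonal, unwound.

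One correction worth making: your map $e:E\to X$ \emph{is} already a fibration, since it arises as the pullback of the fibration $\sigma:S\to Y\times Y$ along $(f,g):X\to Y\times Y$, and axiom~(2) says pullbacks of fibrations along arbitrary maps are fibrations. So the fibrant-replacement step $E\to P_e$ and all the ``main obstacle'' bookkeeping you anticipate is unnecessary: the equalizer is simply $(E,e^*R)$ with $e$ itself as the structure map. This also removes your need to invoke \refprop{fillersuptohomotopy} or lifting through a weak equivalence; the universal property of $E$ as an ordinary pullback in $\ct{C}$ gives the factorization on the nose, and the tracking $T\to e^*R$ comes from the pullback description of $e^*R$ exactly as you say.
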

\begin{proof}
Since $(1,  P1) \cong (1,1)$ is the terminal object, it suffices to construct pullbacks. If $f: (Y, S) \to (X, R)$ and $g: (Z, T) \to (X, R)$ are two maps in ${\rm Hex}(\ct{C})$, their pullback $(W, Q)$ can be constructed by letting $W$ be the pullback
\diag{ W \ar[d] \ar[r] & R \ar[d] \\
Y \times Z \ar[r]_(.45){f \times g} & X \times X }
and by letting $Q$ be the homotopy equivalence relation on $W$ obtained by pulling back the homotopy equivalence relation $\pi_1^*S \cap \pi_2^* T$ on $Y \times Z$ along the map $W \to Y \times Z$.
\end{proof}

\begin{lemm}{monosinhex}
A morphism $f: (X, \rho: R \to X \times X) \to (Y, \sigma: S \to Y \times Y)$ is monic if and only if there is a morphism $h: f^*S \to R$ such that $\rho h = f^*\sigma$. Therefore every mono is isomorphic to one of the form $f: (X, f^*S) \to (Y, S)$.
\end{lemm}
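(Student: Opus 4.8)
The plan is to identify the kernel pair of $f$ using \reflemm{hexcartesian} and then invoke the standard fact that a morphism whose kernel pair exists is monic precisely when the two legs of that kernel pair coincide. So first I would form the pullback of $f\colon(X,R)\to(Y,S)$ with itself. By \reflemm{hexcartesian} the underlying object of this pullback is the pullback of $X\times X\xrightarrow{f\times f}Y\times Y\xleftarrow{\sigma}S$, which is exactly $f^*S$ with its canonical map $f^*\sigma=(\kappa_1,\kappa_2)\colon f^*S\to X\times X$, and the two projections $p_1,p_2\colon(f^*S,Q)\to(X,R)$ of the kernel pair are represented by $\kappa_1$ and $\kappa_2$ respectively (for the homotopy equivalence relation $Q$ on $f^*S$ described in the proof of \reflemm{hexcartesian}, whose precise form will not be needed).

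Next I would record the two equivalences that together prove the first claim. Since kernel pairs exist in ${\rm Hex}(\ct{C})$, the map $f$ is monic if and only if $p_1=p_2$: one direction uses $fp_1=fp_2$ together with monicity, the other uses the universal property of the kernel pair to factor any $a,b$ with $fa=fb$ through it, forcing $a=b$. Here $fp_1=fp_2$ holds because the pullback projection $f^*S\to S$ is a witness, in the sense of the definition of the morphisms of ${\rm Hex}(\ct{C})$, that $f\kappa_1$ and $f\kappa_2$ are the same morphism. And unwinding that same definition, $p_1=p_2$ holds if and only if there is a map $h\colon f^*S\to R$ in \ct{C} with $\rho h=(\kappa_1,\kappa_2)=f^*\sigma$. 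Chaining the two equivalences gives the stated characterisation of monos.

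Finally, for the ``therefore'' I would observe that the pullback projection $f^*S\to S$ always exhibits $f$ as a morphism $(X,f^*S)\to(Y,S)$, and that if $\varphi\colon R\to S$ is a witness that $f\colon(X,R)\to(Y,S)$ is a morphism, then $(\rho,\varphi)\colon R\to f^*S$ exhibits $1_X$ as a morphism $u\colon(X,R)\to(X,f^*S)$. When $f$ is monic, the map $h$ produced above exhibits $1_X$ as a morphism $v\colon(X,f^*S)\to(X,R)$ in the opposite direction; then $vu$ and $uv$ are represented by $1_X\circ 1_X=1_X$, hence are the respective identities, so $u$ is an isomorphism, and $fu=f$ shows it is an isomorphism over $(Y,S)$. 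The one genuine point of substance here is the identification of the kernel pair's underlying object with $f^*S$ and of its two legs with the projections $\kappa_1,\kappa_2$; once that is in place the rest is bookkeeping with the definition of ${\rm Hex}(\ct{C})$, and it is worth noting that every filler used in this argument commutes strictly, so \refrema{onfillersinhex} plays no role.
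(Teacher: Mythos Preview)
Your proof is correct and follows essentially the same approach as the paper's: the paper's proof is a one-sentence sketch (``use the description of pullbacks from the previous lemma and the fact that $m$ is monic iff in the kernel pair $p_1=p_2$''), and you have carefully unpacked exactly that sketch, including the identification of the underlying object of the kernel pair with $f^*S$ and the spelling out of the ``therefore'' clause.
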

\begin{proof} Use the description of pullbacks from the previous lemma and the fact that $m: A \to B$ is monic if and only if in the pullback
\diag{A \times_B A \ar[r]^(.6){p_2} \ar[d]_{p_1} & A \ar[d]^m \\
A \ar[r]_m & B }
we have $p_1 = p_2$.
\end{proof}

\begin{lemm}{hexregular}
The category ${\rm Hex}(\ct{C})$ is regular and the covers are those maps \[
f: (X, \rho: R \to X \times X) \to (Y, \sigma: S \to Y \times Y) \] for which there are maps $g: Y \to X$ and $h: Y \to S$ in \ct{C} such that $\sigma h = (1, fg)$.
\end{lemm}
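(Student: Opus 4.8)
The plan is to produce the standard "image" factorization of an arbitrary morphism, to identify covers via the stated criterion, and then read off regularity. Fix a morphism $f\colon(X,R)\to(Y,S)$ with a representative $f\colon X\to Y$ in $\ct{C}$, write $\sigma=(\sigma_1,\sigma_2)\colon S\to Y\times Y$, let $\rho_S\colon Y\to S$, $\varsigma\colon S\to S$ and $\tau$ be the reflexivity, symmetry and transitivity maps of the homotopy equivalence relation $S$ from \refdefi{hequivalencerel}, and form the pullback $V_0:=X\times_Y S$ of $f$ and $\sigma_1$, with projections $\pi_1\colon V_0\to X$ and $\pi_2\colon V_0\to S$. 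Set $v_0:=\sigma_2\pi_2\colon V_0\to Y$ and $k:=(1_X,\rho_S f)\colon X\to V_0$. Using $\sigma\rho_S=\Delta_Y$ one checks $v_0 k=f$, and the map $R\to S$ over $f\times f$ witnessing that $f$ is a morphism of $\mathrm{Hex}(\ct{C})$ also exhibits $k$ as a morphism $(X,R)\to(V_0,v_0^*S)$; moreover $v_0\colon(V_0,v_0^*S)\to(Y,S)$ is monic by \reflemm{monosinhex} (take the comparison map to be the identity). So every $f$ splits as $v_0\circ k$ with $v_0$ mono, and I claim $k$ always satisfies the stated criterion: with $g:=\pi_1\colon V_0\to X$, the map $(1_{V_0},k\pi_1,\varsigma\pi_2)\colon V_0\to v_0^*S$ has the required property, the verification using $\sigma_2\rho_S=1_Y$ together with $\sigma_1\varsigma=\sigma_2$, $\sigma_2\varsigma=\sigma_1$.

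Next I would prove the two directions of the characterization. For "criterion $\Rightarrow$ cover": suppose $f$ has data $g\colon Y\to X$, $h\colon Y\to S$ with $\sigma h=(1_Y,fg)$, and suppose $f=m\circ\bar k$ with $m$ a monomorphism; by \reflemm{monosinhex} we may take $m$ to be $(V,v^*S)\to(Y,S)$ for some $v\colon V\to Y$, and $\bar k$ represented by some $\kappa\colon X\to V$, so that $v\kappa\simeq_S f$. Put $\ell:=\kappa g\colon Y\to V$; then $v\ell=v\kappa g\simeq_S fg\simeq_S 1_Y$, and (using the auxiliary fact below) $\ell$ is a morphism $(Y,S)\to(V,v^*S)$, so $m\circ\ell=1_{(Y,S)}$. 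A monomorphism that is split epic is an isomorphism, so $m$ is iso; hence $f$ factors through no proper subobject and is a cover. For "cover $\Rightarrow$ criterion": apply this to the factorization $f=v_0\circ k$. Since $f$ is a cover and $v_0$ is mono, $v_0$ is an isomorphism, so its inverse is represented by some $\mu=(\mu_1,\mu_2)\colon Y\to V_0=X\times_Y S$, with $f\mu_1=\sigma_1\mu_2$ and $\sigma_2\mu_2=v_0\mu\simeq_S 1_Y$, say via $e\colon Y\to S$ with $\sigma e=(\sigma_2\mu_2,1_Y)$. Chaining the witness $\mu_2$ for $\sigma_1\mu_2\simeq_S\sigma_2\mu_2$ with $e$ via $\tau$, and then applying $\varsigma$, yields $h\colon Y\to S$ with $\sigma h=(1_Y,f\mu_1)$, so $g:=\mu_1$ and $h$ are as required.

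Then I would assemble regularity. Finite limits are \reflemm{hexcartesian}. The splitting $f=v_0\circ k$ is a (cover, mono) factorization, since $v_0$ is mono and $k$ is a cover by the implication just proved. For stability under pullback, let $f\colon(X,R)\to(Y,S)$ be a cover with data $g,h$, and let $(W,Q)$ be its pullback along an arbitrary $\phi\colon(Z,T)\to(Y,S)$ as in \reflemm{hexcartesian}, so $W=(X\times Z)\times_{Y\times Y}S$ and the pullback of $f$ is the projection $\bar f\colon W\to Z$. Then $\bar g:=(g\phi,1_Z,\varsigma h\phi)\colon Z\to W$ lands in $W$ (using $\sigma_1 h=1_Y$, $\sigma_2 h=fg$ and symmetry) and satisfies $\bar f\bar g=1_Z$, so together with $\bar h:=\rho_T$ (the reflexivity map of $T$) it witnesses that $\bar f$ satisfies the criterion; hence $\bar f$ is a cover. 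This establishes that $\mathrm{Hex}(\ct{C})$ is regular with the stated description of covers.

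I expect the main obstacle to be the elementary but fiddly book-keeping with the structure maps of homotopy equivalence relations, concentrated in one auxiliary fact: if $b\colon Y\to Y$ satisfies $b\simeq_S 1_Y$, then there is a map $S\to S$ over $b\times b$. This is what makes $\ell$ in the second paragraph a genuine morphism of $\mathrm{Hex}(\ct{C})$ (note that the $g,h$ in the criterion are only required to be maps of $\ct{C}$, not morphisms of $\mathrm{Hex}(\ct{C})$, so no compatibility with the relations is given for free). One proves it by composing the given homotopy $b\simeq_S 1_Y$ with itself and with $\varsigma$ using $\tau$ and a couple of pullbacks, much as in the proof of \refprop{pathobjheq}. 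Throughout one also has to keep track of strict versus up-to-homotopy commutativity, but \refprop{fillersuptohomotopy} and \refrema{onfillersinhex} make it harmless to work up to homotopy whenever convenient.
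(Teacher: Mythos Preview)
Your argument is correct, but it takes a noticeably more elaborate route than the paper's. The paper's factorization is simply
\[
(X,R)\ \xrightarrow{\ 1\ }\ (X,f^*S)\ \xrightarrow{\ f\ }\ (Y,S),
\]
with the first map a ``nice epi'' (take $g=1_X$, $h=\rho_S f$) and the second a mono by \reflemm{monosinhex}. Your factorization through $V_0=X\times_Y S$ with $v_0=\sigma_2\pi_2$ is isomorphic to this in ${\rm Hex}(\ct{C})$, but it forces you into extra book-keeping (the map $(1_{V_0},k\pi_1,\varsigma\pi_2)$, etc.) that the paper's choice avoids entirely. With the simpler image $(X,f^*S)$, the ``cover $\Rightarrow$ criterion'' direction becomes a one-liner: the mono $f:(X,f^*S)\to(Y,S)$ is iso, so its inverse is represented by some $g:Y\to X$ with $fg\simeq_S 1_Y$, and one symmetry gives $h$.

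Organizationally, the paper also packages things differently: rather than proving the two implications ``criterion $\Leftrightarrow$ cover'' separately, it just checks four closure properties of the class of maps satisfying the criterion (every map factors as one followed by a mono; stable under isomorphism; they are covers; stable under pullback), from which both regularity and the characterization fall out at once. Your direct proof of ``criterion $\Rightarrow$ cover'' is fine, and you correctly isolate the one genuinely fiddly point --- that $b\simeq_S 1_Y$ yields a map $S\to S$ over $b\times b$ --- which the paper's ``fairly easy'' sweeps under the rug. Your pullback-stability argument (with $\bar g=(g\phi,1_Z,\varsigma h\phi)$ giving a strict section, so $\bar h=\rho_T$ suffices) is clean and arguably more informative than the paper's omitted verification. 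In short: nothing is wrong, but you are working harder than necessary in the factorization step.
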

\begin{proof}
Let us temporarily call maps $f$ as in the statement of the proposition \emph{nice epis}. Then the proposition follows as soon as we show:
\begin{enumerate}
\item Every map factors as a nice epi followed by a mono.
\item Nice epis are stable under isomorphism.
\item Nice epis are covers.
\item Nice epis are stable under pullback.
\end{enumerate}
This is all fairly easy: for example, if $f: (X, R) \to (Y, S)$ is any map, then it can be factored as
\diag{(X, R) \ar[r]^(.45)1 & (X, f^*S) \ar[r]^(.55)f & (Y, S),} where the first map is a nice epi and the second a mono. We leave it to the reader to check the other properties.
\end{proof}

We record the following corollary for future reference:
\begin{lemm}{epilemmainHex}
If $f: (X, R) \to (Y, S)$ is a cover in ${\rm Hex}(\ct{C})$, then $(X, f^*S) \cong (Y, S)$. Indeed, each cover in ${\rm Hex}(\ct{C})$ is isomorphic to one of the form $1: (X, R) \to (X, S)$. 
\end{lemm}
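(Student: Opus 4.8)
The plan is to lean on the explicit description of covers as ``nice epis'' in \reflemm{hexregular}, together with the cover--mono factorisation $(X,R)\xrightarrow{1}(X,f^{*}S)\xrightarrow{f}(Y,S)$ produced in its proof. The key observation is that the condition for a morphism
\[
f\colon (X,\rho\colon R\to X\times X)\longrightarrow (Y,\sigma\colon S\to Y\times Y)
\]
to be a cover --- the existence of maps $g\colon Y\to X$ and $h\colon Y\to S$ in \ct{C} with $\sigma h=(1_{Y},fg)$ --- makes no reference at all to the homotopy equivalence relation $R$ on the domain. Hence if $f$ is a cover regarded as a morphism $(X,R)\to (Y,S)$, then the very same $g$ and $h$ witness that $f$, now regarded as a morphism $(X,f^{*}S)\to (Y,S)$, is again a cover. (That $f$ does represent such a morphism out of $(X,f^{*}S)$ is automatic: $f^{*}S$ is by definition the pullback of $S$ along $f\times f$, so the projection $f^{*}S\to S$ is a map over $f\times f$.)

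Next I would check that $f\colon (X,f^{*}S)\to (Y,S)$ is also a monomorphism. By \reflemm{monosinhex} this just asks for a map $f^{*}S\to f^{*}S$ over $f^{*}\sigma$, and the identity does it. Since ${\rm Hex}(\ct{C})$ is regular by \reflemm{hexregular}, a morphism that is simultaneously a cover and a monomorphism must be an isomorphism: a cover is in particular a strong epi, and for a strong epi $e$ that is also monic the commuting square with $e$ on both vertical edges and identities on the horizontal edges admits a diagonal filler, which is a two-sided inverse of $e$. Therefore $f\colon (X,f^{*}S)\to (Y,S)$ is an isomorphism; this is exactly the assertion $(X,f^{*}S)\cong (Y,S)$.

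For the last clause, I would combine this with the factorisation $f=\bigl((X,f^{*}S)\xrightarrow{f}(Y,S)\bigr)\circ\bigl((X,R)\xrightarrow{1}(X,f^{*}S)\bigr)$ of the proof of \reflemm{hexregular}. Its first factor has now been shown to be an isomorphism, so $f$ is isomorphic, as an object of the arrow category of ${\rm Hex}(\ct{C})$, to the cover $1\colon (X,R)\to (X,f^{*}S)$; and this map is visibly of the advertised form $1\colon (X,R)\to (X,S')$ with $S'=f^{*}S$.

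I do not expect a genuine obstacle. The only points that call for any care are verifying that $f$ really does represent a morphism out of $(X,f^{*}S)$ (immediate from the pullback definition of $f^{*}S$, as noted) and the appeal to the standard fact that a monic cover in a regular category is invertible; everything else is already packaged in \reflemm{hexregular} and \reflemm{monosinhex}.
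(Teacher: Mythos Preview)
Your proposal is correct and follows essentially the same line as the paper: both use the cover--mono factorisation $(X,R)\xrightarrow{1}(X,f^{*}S)\xrightarrow{f}(Y,S)$ from \reflemm{hexregular} and conclude that the second factor is an isomorphism. The paper is slightly more terse, simply observing that $(X,f^{*}S)$ is the image of $f$ and that the image of a cover is (isomorphic to) its codomain; you instead re-verify via the explicit ``nice epi'' description that $f\colon(X,f^{*}S)\to(Y,S)$ is itself a cover and then invoke ``monic cover $=$ iso'', which amounts to the same thing unpacked.
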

\begin{proof}
If $f: (X, R) \to (Y, S)$ is a cover, then $(Y, S)$ is isomorphic to the image of $f$, which, according to \reflemm{hexregular}, is precisely $(X, f^*S)$.
\end{proof}

\begin{lemm}{everymapafibr}
For every map $f: (X, R) \to (Y, S)$ there exists a factorisation $(X, R) \to (X', R') \to (Y, S)$ where the first is an isomorphism in ${\rm Hex}(\ct{C})$ and the second is represented by a fibration $X' \to Y$ in \ct{C}. In particular, every subobject of $(Y, S)$ has a representative via a map $f: (X, f^*S) \to (Y, S)$ where $f: X \to Y$ is a fibration.
\end{lemm}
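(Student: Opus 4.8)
The plan is to factor the underlying map in $\ct{C}$ using \refprop{factlemma} and to transport the given homotopy equivalence relation along the acyclic fibration appearing in that factorisation. So suppose $f \colon (X, R) \to (Y, S)$ is represented in $\ct{C}$ by a morphism $f \colon X \to Y$ together with a map $\varphi \colon R \to S$ over $f \times f$, and write $\rho \colon R \to X \times X$ and $\sigma \colon S \to Y \times Y$ for the structure maps. Applying \refprop{factlemma} to $f$ gives $f = p_f w_f$ with $P_f = X \times_Y PY$ the usual pullback, $q := p_1 \colon P_f \to X$ an acyclic fibration with section $w_f$ (so $q w_f = 1_X$), and $p_f = t p_2 \colon P_f \to Y$ a fibration. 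I would then take $X' := P_f$ and $R' := q^* R$, the pullback of $\rho$ along $q \times q$; this is again a homotopy equivalence relation, since pullbacks of homotopy equivalence relations along arbitrary maps are such (as recalled before \reflemm{hexcartesian}).

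Next I would check that $w_f$ and $q$ represent mutually inverse isomorphisms between $(X, R)$ and $(P_f, q^*R)$. That $q$ represents a morphism $(P_f, q^*R) \to (X, R)$ is immediate from the tautological projection $q^*R \to R$, which lies over $q \times q$; and $w_f$ represents a morphism $(X, R) \to (P_f, q^*R)$ because $q w_f = 1_X$, so the pair consisting of $1_R$ and $(w_f \times w_f)\rho$ determines a map $R \to q^*R$ over $w_f \times w_f$. Since $q w_f = 1_X$, one composite is literally the identity on $(X,R)$; the other, $w_f q$, is identified with $1_{P_f}$ by the map $P_f \to q^*R$ assembled from the reflexivity map $X \to R$ of \refdefi{hequivalencerel} precomposed with $q$, together with the pair $(w_f q, 1_{P_f}) \colon P_f \to P_f \times P_f$ (these agree over $X \times X$ because $q w_f = 1_X$).

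It then remains to check that $p_f$ represents a morphism $(P_f, q^*R) \to (Y, S)$, after which $p_f w_f = f$ shows the composite is the original map and we are done. For this the natural comparison map is $q^*R \to R \xrightarrow{\varphi} S$, which a short chase (using $\sigma \varphi = (f \times f)\rho$) shows to lie over $(fq) \times (fq)$. Now in the pullback square defining $P_f$ one has $fq = f p_1 = s p_2$ while $p_f = t p_2$, so $p_2 \colon P_f \to PY$ is itself a homotopy $fq \simeq p_f$; since $\simeq$ is a congruence (\reftheo{homotopyiscongr}), the comparison map above commutes over $p_f \times p_f$ up to homotopy, and by \refrema{onfillersinhex} that suffices. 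This last step — showing $p_f$ is a morphism — is the only one that is not a purely formal manipulation of pullbacks and reflexivity maps, and it is where one genuinely needs the up-to-homotopy slack of \refrema{onfillersinhex} together with the fact that $fq$ and $p_f$ are only homotopic (via $p_2$) rather than equal; I expect this to be the main, if minor, obstacle.

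For the last sentence of the lemma, start from an arbitrary subobject of $(Y, S)$, which by \reflemm{monosinhex} is represented by a mono of the form $g \colon (Z, g^*S) \to (Y, S)$. Applying the factorisation just constructed to $g$ yields $(Z, g^*S) \to (P_g, R'') \to (Y, S)$ with the first map an isomorphism and $p_g \colon P_g \to Y$ a fibration; as $g$ is monic and the first map is an isomorphism, $p_g$ is monic, so a further application of \reflemm{monosinhex} shows the subobject $p_g \colon (P_g, R'') \to (Y,S)$ is isomorphic, over $(Y,S)$, to $p_g \colon (P_g, p_g^* S) \to (Y, S)$, which has the required form.
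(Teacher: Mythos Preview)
Your proof is correct and follows essentially the same route as the paper's: factor $f$ via \refprop{factlemma}, take $X' = P_f$, and define $R'$ by pulling $R$ back along a homotopy inverse of $w_f$. The paper simply says ``One obtains $R'$ by pulling back $R$ along $i$'' for some homotopy inverse $i$ of $w_f$ and leaves the rest to the reader; you make the natural concrete choice $i = q = p_1$ (which even satisfies $q w_f = 1$ on the nose) and spell out the verifications the paper omits, including the use of \refrema{onfillersinhex} to handle the tracking for $p_f$ and the second appeal to \reflemm{monosinhex} for the final clause.
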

\begin{proof}
The map $f$ can be factored as a homotopy equivalence $w_f: X \to X'$ followed by a fibration $p_f: X'\to Y$; this means that there is a map $i: X'\to X$ such that $w_fi \simeq 1$ and $iw_f \simeq 1$. One obtains $R'$ by pulling back $R$ along $i$. We leave the verification of the details to the reader.
\end{proof}

\begin{theo}{hexexact}
The category ${\rm Hex}(\ct{C})$ is exact.
\end{theo}
\begin{proof}
In view of the previous lemma it suffices to construct quotients of equivalence relations $f: (Y, f^*(R \times R)) \to (X \times X, R \times R)$ where $f: Y \to X \times X$ is a fibration and $(X, \rho: R
\to X \times X)$ is an object in ${\rm Hex}(\ct{C})$. But in this case one can take $(X, \tau: R \times_X Y \times_X R \to X \times X)$, where if we consider $R \times_X Y \times_X R$ heuristically as the set of triples $(r, y, r')$ with $\rho_2(r) = f_1(y)$ and $f_2(y) = \rho_1(r')$, then $\tau$ sends such a triple to $(\rho_1(r), \rho_2(r'))$. The map $1: (X, \rho) \to (X, \tau)$ is a cover and one easily verifies that its kernel pair is isomorphic to $(Y, f^*(R \times R))$.
\end{proof}

\begin{rema}{generalisedelements} The set-theoretic notation that we have used in the description of $\tau$ in the previous theorem can be justified in various ways, for example, by using generalised elements. In that case the description can be understood to say that $R \times_X Y \times_X R$ is an object such that maps into it from an object $I$ correspond bijectively to triples of maps $r: I \to P, y: I \to Y, r': I \to R$ with $\rho_2 r = f_1 y$ and $f_2 y = \rho_1 r'$. In addition, the existence of $\tau$ derives from the fact that the operation taking such triples $(r, y, r')$ to $(\rho_1r, \rho_2r')$ is a natural operation of the form
 \[ {\rm Hom}(I, R \times_X Y \times_X R) \to {\rm Hom}(I, X \times X). \]
But then it follows from the Yoneda Lemma that this operation must be given by postcomposition by some unique map $R \times_X Y \times_X R \to X \times X$. From now on we will increasingly rely on such heuristic set-theoretic descriptions; we trust that the reader can replace these descriptions by diagrammatic ones, if desired.
\end{rema}

\subsection{The homotopy exact completion as a homotopy category.} It turns out that one may also view the homotopy exact completion as a homotopy category. Indeed, it is often useful to regard the homotopy exact completion ${\rm Hex}(\ct{C})$ as the result of a two step procedure, where one first constructs out of a path category \ct{C} a new path category ${\rm Ex}(\ct{C})$, from which ${\rm Hex}(\ct{C})$ can then be obtained by taking the homotopy category.

The objects of ${\rm Ex}(\ct{C})$ are the same as those of ${\rm Hex}(\ct{C})$, that is, they are again homotopy equivalence relations, as defined in \refdefi{hequivalencerel}. However, a morphism from $(X, \rho: R \to X \times X)$ to $(Y, \sigma: S \to Y \times Y)$ in ${\rm Ex}(\ct{C})$ is a morphism $f: X \to Y$ for which there exists a map $\varphi: R \to S$ making the square
\diag{ R \ar[d]_\rho \ar@{.>}[r]^\varphi & S \ar[d]^\sigma \\
X \times X \ar[r]_{f \times f} & Y \times Y }
commute (we call such a map $\varphi$ a \emph{tracking}). For any two such arrows $f, g: X \to Y$ we will write $f \sim g$ if there is a map $H: X \to S$ such that $(f, g) = \sigma H: X \to Y \times Y$. This relation defines a congruence on ${\rm Ex}(\ct{C})$ and we will choose our fibrations and weak equivalences in such a way that this will become the homotopy relation on this path category, so that the homotopy category of ${\rm Ex}(\ct{C})$ will be equivalent to ${\rm Hex}(\ct{C})$.

A morphism $f$ as above is said to be a fibration in ${\rm Ex}(\ct{C})$ if:
\begin{enumerate}
\item[(1)] $f$ is a fibration in \ct{C}, and
\item[(2)] if $X \times_Y S$ is the pullback
\diag{ X \times_Y S \ar[r]^(.6){p_2} \ar[d]_{p_1} & S \ar[d]^{\sigma_1} \\
X \ar[r]_f & Y, }
there is a map $\nabla: X \times_Y S \to R$ in \ct{C} (``a weak connection structure'') such that $\rho_1 \nabla = p_1$ and $f \rho_2 \nabla = \sigma_2 p_2$.
\end{enumerate}
And $f$ will be a weak equivalence in ${\rm Ex}(\ct{C})$ if there is a map $g: (Y, S) \to (X, R)$ such that $fg \sim 1_Y$ and $gf \sim 1_X$.

\begin{lemm}{characyclicfibrinhex}
A fibration $f: (X, \rho: R \to X \times X) \to (Y, \sigma: S \to Y \times Y)$ in ${\rm Ex}(\ct{C})$ is acyclic if and only there is a map $a: Y \to X$ in \ct{C} such that $fa = 1_Y$ and $af \sim 1_X$. Indeed, such a map $a: Y \to X$ in \ct{C} will automatically be a map in ${\rm Ex}(\ct{C})$.
\end{lemm}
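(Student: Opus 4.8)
The plan is to prove both implications by hand, manipulating trackings and the homotopy-equivalence-relation structure carried by $R$ and $S$ (recall that, being objects of ${\rm Ex}(\ct{C})$, these come with reflexivity, symmetry and transitivity maps as in \refdefi{hequivalencerel}), with the weak connection structure $\nabla\colon X\times_Y S\to R$ attached to the fibration $f$ doing all of the real work. I would first isolate the following sublemma, which also disposes of the last sentence of the statement.

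\emph{Sublemma.} If $a\colon Y\to X$ is a map of $\ct{C}$ with $fa=1_Y$ and there is a map $H\colon X\to R$ with $\rho H=(af,1_X)$, then $a$ underlies a morphism $(Y,S)\to(X,R)$ of ${\rm Ex}(\ct{C})$. To prove it I would construct a tracking $\psi\colon S\to R$ using generalised elements, as justified in \refrema{generalisedelements}. Given $s\colon I\to S$, put $y_i=\sigma_i s$; since $fa=1_Y$ the pair $(ay_1,s)$ is a generalised element of $X\times_Y S$, so $u:=\nabla(ay_1,s)$ is an $R$-path from $ay_1$ to a point $x'$ with $fx'=y_2$, and then $Hx'$ is an $R$-path from $af x'=ay_2$ to $x'$. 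Reversing $Hx'$ by the symmetry map of $R$ and composing it with $u$ by the transitivity map of $R$ yields an $R$-path from $ay_1$ to $ay_2$; this operation is natural in $I$, so by Yoneda it is induced by a map $\psi\colon S\to R$ with $\rho\psi=(a\times a)\sigma$, i.e. a tracking for $a$. This is the one genuinely non-routine step of the whole argument: it is exactly where axiom (2) for fibrations of ${\rm Ex}(\ct{C})$ (the existence of $\nabla$) is used, and everything else is bookkeeping.

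Granting the sublemma, the direction $(\Leftarrow)$ is immediate: a map $a\colon Y\to X$ of $\ct{C}$ with $fa=1_Y$ and $af\sim1_X$ supplies precisely the $H$ required by the sublemma, so $a$ is a morphism of ${\rm Ex}(\ct{C})$; then $fa=1_Y$ gives $fa\sim1_Y$ via the reflexivity map of $S$, while $af\sim1_X$ is the hypothesis, so $f$ is a weak equivalence of ${\rm Ex}(\ct{C})$ and, being already a fibration, an acyclic fibration.

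For $(\Rightarrow)$, let $f$ be an acyclic fibration of ${\rm Ex}(\ct{C})$: it is then a fibration, so $\nabla$ exists, and a weak equivalence, so there is $g\colon(Y,S)\to(X,R)$ in ${\rm Ex}(\ct{C})$ with $fg\sim1_Y$, say via $H\colon Y\to S$ with $\sigma H=(fg,1_Y)$, and with $gf\sim1_X$. The pair $(g,H)$ is a generalised element of $X\times_Y S$, and I would set $a:=\rho_2\nabla(g,H)\colon Y\to X$; the weak connection equations give $fa=\sigma_2 H=1_Y$ on the nose and exhibit $\nabla(g,H)$ as an $R$-path from $g$ to $a$ over $Y$. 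Precomposing that path with $f$, reversing it, and composing (again via the symmetry and transitivity maps of $R$) with a witness of $gf\sim1_X$ produces a map $W\colon X\to R$ with $\rho W=(af,1_X)$; the sublemma then makes $a$ a morphism of ${\rm Ex}(\ct{C})$, $W$ witnesses $af\sim1_X$, and $fa=1_Y$ as shown, so $a$ is the required map. The only subtlety here is the order of operations: one must first produce $W$, hence establish $af\sim1_X$ at the level of $\ct{C}$, and only then invoke the sublemma to conclude that $a$ is even a morphism of ${\rm Ex}(\ct{C})$.
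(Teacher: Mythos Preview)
Your proof is correct and follows essentially the same approach as the paper: the construction of $a:=\rho_2\nabla(g,H)$ in the forward direction and the tracking for $a$ built from $\nabla(ay_1,s)$ composed with (the reverse of) the witness of $af\sim 1_X$ in the converse direction are exactly what the paper does. The only difference is organisational: you isolate the tracking construction as a sublemma and invoke it in both directions, whereas the paper proves it inline during the $(\Leftarrow)$ argument and simply appeals to the congruence $\sim$ (so that $g\sim a$ yields $af\sim gf\sim 1_X$) in the $(\Rightarrow)$ argument.
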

\begin{proof}
If $f$ is acyclic, there is a map $g: (Y, S) \to (X, R)$ such that $fg \sim 1_Y$ and $gf \sim 1_X$. The former gives one a map $H: Y \to S$ such that $\sigma H = (fg, 1)$. We put $a := \rho_2 \nabla(g, H)$. Then \[ fa = f\rho_2 \nabla(g, H) = \sigma_2 p_2(g, H) = \sigma_2 H = 1_Y \] and $\nabla(g, H)$ witnesses that $g \sim a$, so $af \sim gf \sim 1_X$.

Conversely, if $a: Y \to X$ is such that $fa = 1_Y$ and $af \sim 1_X$, then $a$ can be regarded as a map $(Y, S) \to (X, R)$. To show this, we use set-theoretic notation, as discussed in \refrema{generalisedelements}. If $s \in S$ connects $y_0$ and $y_1$, that is, if $\sigma_1(s) = y_0$ and $\sigma_2(s) = y_1$, then $t_0:= \nabla(a(y_0),s)$ connects $a(y_0)$ with some point $x$ such that $f(x) = y_1$. But then from the witness of $1_X \sim af$ we find a $t_1$ connecting $x$ and $a(f(x)) = a(y_1)$. So in order to obtain a tracking for $a$ we should send $s$ to the composition of $t_0$ and $t_1$, using the transitivity of $R$.
\end{proof}

\begin{theo}{hexcatoffibrantobj}
The category ${\rm Ex}(\ct{C})$ is a path category whose homotopy category is equivalent to ${\rm Hex}(\ct{C})$.
\end{theo}
\begin{proof}
We check the axioms.

(1) \emph{Fibrations are closed under composition.} If $f: (X, \rho: R \to X \times X) \to (Y, \sigma: S \to Y \times Y)$ and $g: (Y, \sigma: S \to Y \times Y) \to (Z, \tau: T \to Z \times Z)$ are fibrations with weak connections $\nabla^f: X \times_Y S \to R$ and $\nabla^g: Y \times_Z T \to S$, respectively, then $gf$ is a fibration with weak connection $\nabla^{gf}: X \times_Z T \to R$ defined by $\nabla^{gf}(x, t) := \nabla^f(x, \nabla^g(f(x), t))$.

(2) \emph{The pullback of a fibration along any map exists and is again a fibration.} If $f: (X, \rho: R \to X \times X) \to (Y, \sigma: S \to Y \times Y)$ is a fibration with weak connection $\nabla^f$ and $g: (Z, \tau: T \to Z \times Z) \to (Y, \sigma: S \to Y \times Y)$ is tracked by $\varphi$, then we can construct its pullback by taking $X \times_Y Z$ together with the homotopy equivalence relation $\pi_1^* R \cap \pi_2^*S$. The projection $X \times_Y Z \to Z$ has a weak connection structure: given a pair $(x_0, z_0)$ with $f(x_0) = g(z_0)$ and an element $t  \in T$ from $z_0$ to $z_1$, the element $s := \varphi(t) \in S$ connects $f(x_0) = g(z_0)$ and $g(z_1)$. So by the weak connection on $f$ one obtains an element $r \in R$ connecting $x_0$ to some $x_1$ with $f(x_1) = g(z_1)$. So $(r, t)$ connects $(x_0, z_0)$ to some point $(x_1, z_1)$ in $X \times_Y Z$ above $z_1$.

(3) \emph{The pullback of an acyclic fibrations along any map is again an acyclic fibration.} If in the situation as in (2) the map $f$ has a section $a$ with $af \sim 1$, then the projection $\pi_2: X \times_Y Z \to Z$ has a section $b$ defined by $b(z) = (a(g(z)),z)$. It is clear that $b \pi_2 \sim 1$.

(4) \emph{Weak equivalences satisfy 2-out-of-6.} This follows from the fact that $\sim$ is a congruence.

(5) \emph{Isomorphisms are acyclic fibrations and every acyclic fibration has a section.} Immediate from the previous lemma.

(6) \emph{The existence of path objects.} If $(X, \rho: R \to X \times X)$ is a homotopy equivalence relation with $e: X \to R$ witnessing reflexivity (so $\rho e = \Delta_X$), then we can factor the diagonal on $X$ as:
    \diag{ (X, R) \ar[r]^(.45)e & (R, \rho_1^* R) \ar[r]^(.4)\rho & (X \times X,\pi_1^* R \cap \pi_2^* R),} where $\rho_1: R \to X$ is an acyclic fibration left inverse to the first map. We leave the verifications to the reader.
    
(7) \emph{The category has a terminal object and every map to the terminal object is a fibration.} The terminal object is $(1, P1) \cong (1,1)$. The verification that the unique map $X \to 1$ is always a fibration $(X, R) \to (1, P1)$ in ${\rm Ex}(\ct{C})$ is trivial.

Note that it follows from the description of the path objects in (6) that the relation $\sim$ is precisely the homotopy relation in ${\rm Ex}(\ct{C})$: for if $f, g: (Y, \sigma: S \to Y \times Y) \to (X, \rho: R \to X \times X)$ are two parallel maps and $H: Y \to R$ witnesses that $f \sim g$, then $H$ is also a map $(Y, S) \to (R, \rho^*_1 R)$ which is tracked by any tracking of $f$. Therefore ${\rm Ex}(\ct{C})$ is a path category whose homotopy category is equivalent to ${\rm Hex}(\ct{C})$.
\end{proof}

\subsection{The embedding.} In the theory of exact completions of categories with finite limits the embedding of the original category into the exact completion plays an important r\^ole. For homotopy exact completions there is a similar functor
\[ i: \ct{C} \to {\rm Hex}(\ct{C}) \]
defined by sending $X$ to $(X, PX)$. In this subsection we will try to determine which properties from the theory of ordinary exact completions continue to hold and which ones seem to break down.

First of all, we should note that the functor $i$ is full, but not faithful: indeed, its image is equivalent to the homotopy category ${\rm Ho}(\ct{C})$.

In the ordinary theory of exact completion the functor $i$ preserves finite limits. This is not true here.
\begin{exam}{idoesnotpreservepullbacks}
The category of topological spaces has the structure of a path category if one takes the homotopy equivalences as its weak equivalences and the Hurewicz fibrations as the fibrations. The universal cover of the circle $p: \mathbb{R} \to S^1$ is a Hurewicz fibration which fits into a (homotopy) pullback as follows:
\diag{ \mathbb{Z} \ar[r] \ar[d] & \mathbb{R} \ar[d]^p \\
1 \ar[r] & S^1. }
The image of this square under $i$ is no longer a pullback, however, because $\mathbb{R}$ is contractible, while the discrete space $\mathbb{Z}$ is not.
\end{exam}
 
Instead one has:
\begin{prop}{isendspbkstoqpbks} \begin{enumerate}
\item[(1)] The functor $i$ preserves finite products. 
\item[(2)] If
\diag{ C \times_A B \ar[d]_q \ar[r]^(.6)g & B \ar[d]^p \\
C \ar[r]_f & A }
is a pullback square in \ct{C} in which $q$ and $p$ are fibrations, then the induced arrow $i(C \times_A B) \to iC \times_{iA} iB$ in ${\rm Hex}(\ct{C})$ is a cover. So the functor $i: \ct{C} \to {\rm Hex}(\ct{C})$ sends homotopy pullback squares to quasi-pullback squares.
\end{enumerate} \end{prop}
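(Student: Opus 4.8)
The plan is to prove (1) first, as it is the warm-up, and then use a similar but more careful analysis for (2). For (1), note that $i(X \times Y) = (X \times Y, P(X \times Y))$, whereas $iX \times iY$ is, by the construction of products in Lemma~\ref{hexcartesian}, the object $(X \times Y, \pi_1^* PX \cap \pi_2^* PY)$. The identity map $X \times Y \to X \times Y$ represents a morphism between these two objects, and I would check it is an isomorphism in ${\rm Hex}(\ct{C})$: a path object $P(X \times Y)$ can always be taken to be $PX \times PY$, and $\pi_1^* PX \cap \pi_2^* PY$ is precisely $PX \times PY$ as a homotopy equivalence relation on $X \times Y$ (this is just the description of $R \cap S$ applied to the two pulled-back relations). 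Since the two relations are literally isomorphic over $X \times X \times Y \times Y$, the identity is an isomorphism in ${\rm Hex}(\ct{C})$. Using \refcoro{uniquenessoffact} one sees this does not depend on the chosen path objects.

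For (2), the strategy is to exhibit the comparison map $i(C \times_A B) \to iC \times_{iA} iB$ explicitly and then verify it is a nice epi in the sense of the proof of \reflemm{hexregular}, i.e.\ produce a section up to the relevant homotopy equivalence relation. By Lemma~\ref{hexcartesian}, $iC \times_{iA} iB$ has as underlying object the pullback $W$ of $PA \to A \times A \leftarrow C \times B$ along $f \times g$ — that is, $W = C \times^h_A B$, the \emph{homotopy} pullback of $f$ and $g$ — equipped with the homotopy equivalence relation obtained by pulling back $\pi_1^* PC \cap \pi_2^* PB$ along $W \to C \times B$. Meanwhile $i(C \times_A B) = (C \times_A B, P(C \times_A B))$, and since $q$ and $p$ are fibrations, the strict pullback $C \times_A B$ agrees up to homotopy equivalence with the homotopy pullback $W$; concretely there is a canonical map $k: C \times_A B \to W = C \times^h_A B$ sending a pair $(c,b)$ with $f(c)=g(b)$ to $(c, b, r(fc))$, i.e.\ pairing it with the constant path. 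This $k$ is the comparison map on underlying objects, and by \refprop{basechangeforwe} (or directly: the square expressing $C \times_A B$ as a pullback of a fibration shows $k$ is a weak equivalence) $k$ is in fact a homotopy equivalence. The point, however, is \emph{not} that $k$ is an isomorphism in ${\rm Hex}(\ct{C})$ — it need not be, as Example~\ref{idoesnotpreservepullbacks} shows — but only that the represented morphism is a cover, and by \reflemm{hexregular} this requires a map $W \to C \times_A B$ in \ct{C} together with a witness that its composite with $k$ lands in the right relation.

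Here is the main obstacle and how I would get around it: given $(c,b,\alpha) \in W$, so $\alpha: f(c) \to g(b)$ a path in $A$, I need to produce a genuine pair in $C \times_A B$, which means strictening $\alpha$ away. Since $p: B \to A$ is a fibration, I can use a transport structure (Theorem~\ref{fibrwithtransport}) — or better, the connection of Theorem~\ref{existenceconnections} — to transport $b$ along $\alpha$ to an element $b'$ with $g(b') = f(c)$, and then $(c, b') \in C \times_A B$; this defines the section $\ell: W \to C \times_A B$. It then remains to check that $k\ell$ is related to $1_W$ via the homotopy equivalence relation of $iC \times_{iA} iB$, i.e.\ that $b'$ and $b$ are joined by a path in $B$ lying over $\alpha$ (supplied by the connection $\nabla$) while $c$ is joined to itself by the constant path — this is exactly the data packaged by $\pi_1^* PC \cap \pi_2^* PB$ pulled back to $W$. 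That the connection's output path lies over $\alpha$ (not merely over some path homotopic to $\alpha$) is where \refprop{fillersuptohomotopy} and \refrema{onfillersinhex} are invoked: it suffices that the relevant square commutes up to homotopy, so a weak connection (Corollary~\ref{aboutweakconnections}) already does the job. Finally, the last sentence of (2) — that $i$ sends homotopy pullback squares to quasi-pullback squares — follows formally: any homotopy pullback square can be computed (Remark~\ref{comphompbks}) by replacing $f$ or $g$ by a fibration and taking a strict pullback, $i$ preserves homotopy equivalences since it inverts weak equivalences, and "cover into the strict pullback object" is exactly the definition of a quasi-pullback.
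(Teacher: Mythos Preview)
Your argument is correct, but the paper's route is considerably shorter. The key difference is in how the pullback $iC \times_{iA} iB$ is computed. You use \reflemm{hexcartesian} directly, which gives the homotopy pullback $W = C \times_A^h B$ as the underlying object, and then you must build a section $\ell: W \to C \times_A B$ by hand using transport along the fibration $p$ and check the cover condition via the connection. This works, though note a small slip: the map $B \to A$ is $p$, not $g$, so the path should read $\alpha: f(c) \to p(b)$, and to land over $f(c)$ you transport $b$ along the inverse of $\alpha$.

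The paper instead invokes \reftheo{hexcatoffibrantobj}: since $p$ is a fibration in \ct{C}, the map $ip$ is a fibration in ${\rm Ex}(\ct{C})$ (the required weak connection is exactly \refcoro{aboutweakconnections}), and pullbacks along fibrations in ${\rm Ex}(\ct{C})$ are computed with the \emph{strict} pullback $C \times_A B$ as underlying object, equipped with the relation $q^*PC \cap g^*PB$. Hence $iC \times_{iA} iB \cong (C \times_A B,\, q^*PC \cap g^*PB)$ in ${\rm Hex}(\ct{C})$, and the comparison map from $i(C \times_A B) = (C \times_A B, P(C \times_A B))$ is represented by the identity on $C \times_A B$; such a map is trivially a cover by \reflemm{hexregular}. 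In effect the paper hides the transport/connection work inside the verification that $ip$ is an ${\rm Ex}(\ct{C})$-fibration, whereas you perform that same work explicitly when constructing $\ell$. Both arguments rest on the same ingredient; the paper's packaging just avoids writing down the section.
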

\begin{proof}
(1) From the fact that $P(X \times Y) \simeq PX \times PY$ and the description of finite limits in ${\rm Hex}(\ct{C})$ in \reflemm{hexcartesian} it follows that $i$ preserves products.

(2) By \reftheo{hexcatoffibrantobj} the pullback $iC \times_{iA} iB$ in ${\rm Hex}(\ct{C})$ is isomorphic to \[ (C \times_A B, q^* PC \times g^* PB), \] and hence the identity map from $(C \times_A B, P(C \times_A B))$ to this object is a cover by \reflemm{hexregular}.
\end{proof}

In the ordinary theory of exact completions the objects in the image of $i$ are, up to isomorphism, the projectives (an object $P$ in an exact category is projective if any cover $e: X \to P$ has a section). That does not seem to be the case here, but we do have the following result:

\begin{prop}{iiscovering}
The objects in the image of the functor $i: \ct{C} \to {\rm Hex}(\ct{C})$ are projective and each object in ${\rm Hex}(\ct{C})$ is covered by some object in the image of this functor.
\end{prop}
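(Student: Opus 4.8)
The plan is to prove the two assertions separately, both using the explicit description of covers in ${\rm Hex}(\ct{C})$ from \reflemm{hexregular} and the description of finite limits from \reflemm{hexcartesian}.

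\textbf{Covering.} First I would show that every object $(Y, \sigma : S \to Y \times Y)$ is covered by $i(Y) = (Y, PY)$. The obvious candidate for the cover is the identity map $1 : Y \to Y$, viewed as a morphism $(Y, PY) \to (Y, S)$: it is a well-defined morphism in ${\rm Hex}(\ct{C})$ because \refprop{PXleastheqrel} supplies a tracking $PY \to S$ (the path object is the least homotopy equivalence relation). To see that this map is a cover, I apply the criterion of \reflemm{hexregular}: I need maps $g : Y \to Y$ and $h : Y \to S$ in \ct{C} with $\sigma h = (1, 1\cdot g) = (1, g)$. Taking $g = 1_Y$ and $h = \rho_S$ (the reflexivity term $Y \to S$ with $\sigma \rho_S = \Delta_Y$) does the job. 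Hence $1 : (Y, PY) \to (Y, S)$ is a cover with domain in the image of $i$.

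\textbf{Projectivity.} Next I would show $i(X) = (X, PX)$ is projective, i.e. every cover $e : (Z, T) \to (X, PX)$ admits a section in ${\rm Hex}(\ct{C})$. By \reflemm{hexregular}, a cover $e$ (represented by $e : Z \to X$ in \ct{C}) comes equipped with maps $g : X \to Z$ and $h : X \to T$ in \ct{C} such that $\tau h = (1_X, e g)$, where $\tau : T \to Z \times Z$ is the structure map of $T$. I claim that $g : X \to Z$, viewed as a morphism $(X, PX) \to (Z, T)$ in ${\rm Hex}(\ct{C})$, is the desired section. First, $g$ is a legitimate morphism: again by \refprop{PXleastheqrel} there is a tracking $PX \to T$, but more directly one uses that $T$ is a homotopy equivalence relation and $h : X \to T$ gives a map $PX \to T$ over $X \times X$ by composing the canonical map $PX \to P X$ with $h$ — more carefully, a tracking $\varphi : PX \to T$ is obtained from the lifting lemma since $PX$ maps to $X \times X$ and $(g \times g)$ followed by inclusion factors through $T$ via transitivity applied to $h$. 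Second, $e g = 1_X$ in ${\rm Hex}(\ct{C})$: the composite $eg : X \to X$ represents a morphism $(X, PX) \to (X, PX)$, and the witness that it equals $1_X$ is precisely the map $h : X \to T$ composed with a tracking of $e$ — we have $\tau h = (1_X, eg)$, and pushing this along $e \times e$ (using the tracking of $e$) into $PX$ gives $H : X \to PX$ with $(s,t) H = (1_X, eg)$. Hence $eg \simeq 1_X$, so $g$ is a section.

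\textbf{Main obstacle.} The routine bookkeeping is checking that the various maps ($1_Y$, $g$) really are morphisms in ${\rm Hex}(\ct{C})$, i.e. that trackings exist; this uses \refprop{PXleastheqrel} (least homotopy equivalence relation) together with the transitivity structure $\tau$ on the homotopy equivalence relations involved, and the fact (\refrema{onfillersinhex}) that fillers are only needed up to homotopy. The one place requiring genuine care is verifying that $g$ tracks correctly as a morphism $(X, PX) \to (Z, T)$: one needs a map $PX \to T$ over $g \times g$, which is produced by applying \reflemm{lifting} to the square with $r : X \to PX$ on the left, the composite $X \xrightarrow{g} Z \xrightarrow{\rho_T} T$ on top (reflexivity of $T$), $\tau$ as the fibration on the right, and $(gs, gt) : PX \to Z \times Z$ along the bottom — the square commuting because $\tau \rho_T g = \Delta_Z g = (g,g) = (gs, gt) r$. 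This is the step I expect to demand the most attention, though it is still essentially an application of results already established.
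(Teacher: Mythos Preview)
Your overall strategy matches the paper's: use \reflemm{PXleastheqrel} to produce trackings out of $PX$, and use the explicit description of covers from \reflemm{hexregular} both to exhibit $1:(Y,PY)\to(Y,S)$ as a cover and to build a section of any cover onto $(X,PX)$. The covering half is fine.

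In the projectivity half you have misread \reflemm{hexregular}. For a cover $e:(Z,T)\to(X,PX)$ the lemma gives maps $g:X\to Z$ and $h:X\to PX$ (into the equivalence relation on the \emph{codomain}) with $(s,t)h=(1_X,eg)$; it does \emph{not} give $h:X\to T$. Your equation $\tau h=(1_X,eg)$ does not even typecheck, since $\tau h$ would land in $Z\times Z$ while $(1_X,eg)$ lands in $X\times X$. Once you correct this, the verification that $eg=1$ in ${\rm Hex}(\ct{C})$ becomes immediate: the homotopy $h:X\to PX$ with $(s,t)h=(1_X,eg)$ is literally the witness required by the definition of equality of morphisms into $(X,PX)$, and there is no need to ``push along $e\times e$ using a tracking of $e$''. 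The construction of the tracking for $g$ that you spell out under ``Main obstacle'' is correct (and is precisely \reflemm{PXleastheqrel}), so after fixing the target of $h$ your argument goes through and coincides with the paper's.
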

\begin{proof}
It follows immediately from \reflemm{PXleastheqrel} and the characterisation of covers in \reflemm{epilemmainHex} that objects of the form $i(X)$ are projective, while maps of the form $1: (X, PX) \to (X, R)$ are covers.
\end{proof}

\begin{prop}{conntoordintheory}
The category ${\rm Hex}(\ct{C})$ is the exact completion of ${\rm Ho}(\ct{C})$ as a weakly lex category, as defined in \cite{carbonivitale98}.
\end{prop}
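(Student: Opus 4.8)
<br>

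The plan is to invoke the universal characterisation of the exact completion of a weakly lex category and verify its hypotheses for the functor $i \colon \ct{C} \to {\rm Hex}(\ct{C})$, or rather for the induced functor $\bar{\imath} \colon {\rm Ho}(\ct{C}) \to {\rm Hex}(\ct{C})$. Recall from \cite{carbonivitale98} that a weakly lex category $\ct{D}$ (a category with \emph{weak} finite limits) has an exact completion $\ct{D}_{\mathrm{ex/lex}}$, characterised up to equivalence by the following two-part universal property: there is a functor $\Gamma \colon \ct{D} \to \ct{D}_{\mathrm{ex/lex}}$ which preserves weak finite limits and whose image consists of (enough) projective objects, and this functor is universal among left-covering functors from $\ct{D}$ to exact categories. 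Concretely, to identify ${\rm Hex}(\ct{C})$ with $({\rm Ho}(\ct{C}))_{\mathrm{ex/lex}}$ it suffices to check: (a) ${\rm Ho}(\ct{C})$ is weakly lex, i.e.\ has weak finite limits; (b) ${\rm Hex}(\ct{C})$ is exact, which is \reftheo{hexexact}; (c) $\bar{\imath}$ is a full and faithful, left-covering functor whose image is a generating family of projectives.

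First I would check that ${\rm Ho}(\ct{C})$ has weak finite limits. It has a terminal object (the image of $1$), so it remains to produce weak pullbacks. Given $f \colon A \to I$ and $g \colon B \to I$ in ${\rm Ho}(\ct{C})$, I claim the homotopy pullback $A \times_I^h B$ of \refdefi{hompbk} is a weak pullback in ${\rm Ho}(\ct{C})$: the square commutes up to homotopy hence commutes in ${\rm Ho}(\ct{C})$, and given any cone $(u \colon C \to A, v \colon C \to B)$ with $fu \simeq gv$, \refprop{fillersuptohomotopy} applied to the fibration $(p_1, p_2)\colon A \times_I^h B \to A \times B$ (after combining the cone data with the witnessing homotopy into a map $C \to A \times_I^h B$ up to homotopy) produces the required, not-necessarily-unique, mediating map. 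This is essentially a repackaging of \refrema{comphompbks}.

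Next I would verify that $\bar{\imath} \colon {\rm Ho}(\ct{C}) \to {\rm Hex}(\ct{C})$ is full and faithful — which is exactly the remark made just before \refexam{idoesnotpreservepullbacks}, that the image of $i$ is equivalent to ${\rm Ho}(\ct{C})$ — and that it is left-covering, i.e.\ sends weak limit cones to covers of the corresponding actual limits in ${\rm Hex}(\ct{C})$. For finite products this is \refprop{isendspbkstoqpbks}(1) (an isomorphism, a fortiori a cover). For weak pullbacks: a weak pullback in ${\rm Ho}(\ct{C})$ is a homotopy pullback in \ct{C}, and one may compute it (\refrema{comphompbks}) by replacing $f$ and $g$ by fibrations $p, q$ and taking the honest pullback $C \times_A B$; then \refprop{isendspbkstoqpbks}(2) says precisely that $i(C \times_A B) \to iC \times_{iA} iB$ is a cover in ${\rm Hex}(\ct{C})$, which is the left-covering condition. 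Finally, \refprop{iiscovering} gives that the objects $i(X)$ are projective and cover every object of ${\rm Hex}(\ct{C})$, and \reflemm{PXleastheqrel} together with \reflemm{epilemmainHex} shows the projective covers can be chosen functorially enough to match the construction in \cite{carbonivitale98}.

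The main obstacle I anticipate is bookkeeping about \emph{which} universal property of $\ct{D}_{\mathrm{ex/lex}}$ to cite and checking that our situation meets its exact hypotheses: the literature (\cite{carboni95,carbonivitale98}) has several slightly different presentations (exact completions of categories with weak finite limits, with a terminal object, with products and weak equalizers, etc.), and left-coveringness must be stated for the correct class of cones. Once the right formulation is pinned down, the verification is just the assembly of the four ingredients above — weak lexness of ${\rm Ho}(\ct{C})$, exactness of ${\rm Hex}(\ct{C})$, full-faithful left-coveringness of $\bar{\imath}$, and the enough-projectives condition — each of which has already been established in the preceding results, so no substantial new argument is required.
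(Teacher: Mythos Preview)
Your proposal is correct and follows essentially the same route as the paper, which simply invokes \reftheo{hexexact}, \refprop{iiscovering}, and Theorem~16 of \cite{carbonivitale98}. The paper's version is more economical: Theorem~16 in \cite{carbonivitale98} is a recognition principle saying that an exact category with enough projectives is the exact completion (as a weakly lex category) of its full subcategory of projectives, so the weak lexness of ${\rm Ho}(\ct{C})$ and the left-covering property of $\bar{\imath}$ that you verify by hand are consequences of that theorem rather than hypotheses you need to check separately.
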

\begin{proof}
This follows from \reftheo{hexexact} and \refprop{iiscovering} above and Theorem 16 in \cite{carbonivitale98}.
\end{proof}

The previous proposition means that the homotopy exact completion can be described in terms of pseudo-equivalence relations. This is occasionally useful, so we will spell this out here.

\begin{defi}{pseqrel}
Let $f = (f_1,f_2): R \to X \times X$ be an arbitrary map (not necessarily a fibration) in a path category \ct{C}. Then $f$ will be called a \emph{pseudo-equivalence relation}, if there are maps $\rho: X \to R, \sigma: R \to R$ and $\tau: P \to R$ witnessing reflexivity, symmetry and transitivity of this relation, where $P$ is the homotopy pullback of $f_1$ and $f_2$.
\end{defi}

An alternative definition of ${\rm Hex}(\ct{C})$ can now be given as follows: take as objects pairs $(X, R)$, where $R$ is a pseudo-equivalence relation on $X$. A morphism \[ (X, \rho: R \to X \times X) \to (Y, \sigma: S \to Y \times Y) \] is an equivalence class of morphisms $f: X \to Y$ for which there is an arrow $\varphi: R \to S$ making the square
\diag{ R \ar[d]_\rho \ar@{.>}[r]^\varphi & S \ar[d]^\sigma \\
X \times X \ar[r]_{f \times f} & Y \times Y }
commute up to homotopy; here two such arrows $f, g: X \to Y$ are equivalent if there is a map $h: X \to S$ such that $(f, g) \simeq \sigma h$.

\begin{prop}{thirddescrofhex}
The category just described is equivalent to ${\rm Hex}(\ct{C})$.
\end{prop}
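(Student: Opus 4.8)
The plan is to derive this from \refprop{conntoordintheory} together with the standard description of the exact completion of a weakly lex category in terms of pseudo-equivalence relations (\cite{carbonivitale98,carboniceliamagno82}). Recall that for a category $\ct{D}$ with weak finite limits the objects of its exact completion are parallel pairs $R \rightrightarrows X$ in $\ct{D}$ equipped with maps witnessing reflexivity, symmetry and transitivity, where the transitivity witness is a map $P \to R$ out of \emph{some} weak pullback $P$ of the two legs $R \to X$; a morphism is an equivalence class of maps $X \to Y$ admitting a tracking, two tracked maps being identified when they are connected by a map into $S$. So it suffices to check that, under the quotient functor $\gamma \colon \ct{C} \to {\rm Ho}(\ct{C})$, this recipe applied to $\ct{D} = {\rm Ho}(\ct{C})$ reproduces exactly the category described in the statement.

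I would first record the two elementary facts that make the translation run. The functor $\gamma$ preserves finite products, since $P(X \times Y) \simeq PX \times PY$ and hence a homotopy between two maps with codomain $X \times Y$ amounts to a pair of homotopies; in particular $X \times X$ is still a product of $X$ with itself in ${\rm Ho}(\ct{C})$. And $\gamma$ sends homotopy pullback squares in $\ct{C}$ to weak pullback squares in ${\rm Ho}(\ct{C})$: by the very construction of $A \times_I^h B$, a map $Z \to A \times_I^h B$ is the same as a pair of maps $Z \to A$ and $Z \to B$ together with a homotopy between the two composites $Z \to I$, so $\gamma(A \times_I^h B)$ has the weak universal property of a pullback of $\gamma(f)$ and $\gamma(g)$. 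Using these, an object of the exact completion of ${\rm Ho}(\ct{C})$ unwinds to exactly a pair $(X, R)$ with $R$ a pseudo-equivalence relation on $X$ in the sense of \refdefi{pseqrel}: a parallel pair $R \rightrightarrows X$ in ${\rm Ho}(\ct{C})$ is a homotopy class of maps $R \to X \times X$ in $\ct{C}$, which we may replace by an honest representative without changing the category (the morphisms being up to homotopy anyway); the reflexivity and symmetry data become maps $\rho \colon X \to R$ and $\sigma \colon R \to R$ with $f\rho \simeq \Delta_X$ and $f\sigma$ homotopic to $f$ followed by the twist on $X \times X$; and a transitivity witness over an arbitrary weak pullback of the two legs is equivalent to one over the homotopy pullback $P$ of those legs, because $\gamma(P)$ and any such weak pullback map to one another as cones, and precomposing a transitivity witness with such a comparison map (which respects the projections to $X$) carries it between the two formulations. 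The morphisms and the identification of parallel morphisms translate in the same way: a tracked map $f \colon X \to Y$ in ${\rm Ho}(\ct{C})$ is a homotopy class of maps admitting a $\varphi$ with $\sigma\varphi = (f\times f)\rho$ in ${\rm Ho}(\ct{C})$, i.e.\ up to homotopy in $\ct{C}$, while $f \sim g$ means $(f,g) = \sigma h$ in ${\rm Ho}(\ct{C})$, i.e.\ $(f,g) \simeq \sigma h$ in $\ct{C}$. Putting this together, the category described in the statement is equivalent to the exact completion of ${\rm Ho}(\ct{C})$ as a weakly lex category, which by \refprop{conntoordintheory} is equivalent to ${\rm Hex}(\ct{C})$.

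The step I expect to require the most care is the book-keeping around weak versus homotopy pullbacks in the transitivity clause: one must verify that the condition in \refdefi{pseqrel}, phrased with the homotopy pullback of \refdefi{hompbk}, really coincides with the weak-pullback version used in \cite{carbonivitale98} and does not depend on the chosen weak pullback. This reduces to the weak universal property — every cone, in particular $\gamma(P)$, maps to any weak pullback — but it is worth spelling out that the comparison maps are cone morphisms, so that transporting a transitivity witness along them again yields a legitimate transitivity witness. An alternative, self-contained route avoids \cite{carbonivitale98}: build the comparison functor from ${\rm Hex}(\ct{C})$ to the category of the statement directly as the identity on underlying data; it is fully faithful by \refprop{fillersuptohomotopy}, which strictifies both trackings and identifications of morphisms, and essentially surjective because any pseudo-equivalence relation $(X, f \colon R \to X \times X)$ becomes an honest homotopy equivalence relation after factoring $f$ as a weak equivalence $R \to \bar R$ followed by a fibration $\bar R \to X \times X$ and then strictifying the reflexivity, symmetry and transitivity witnesses over $\bar R$. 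On this route the delicate point is again the strictification of transitivity, since $\bar R \times_X \bar R$ must first be identified, up to homotopy equivalence, with the homotopy pullback $P$ of the two legs of $f$.
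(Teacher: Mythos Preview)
Your proposal is correct and in fact covers \emph{both} routes the paper itself indicates. The paper's proof says first that the result follows from \refprop{conntoordintheory} (your main approach), and then sketches the direct argument: the embedding is clear since every homotopy equivalence relation is a pseudo-equivalence relation, and essential surjectivity comes from factoring $\rho: R \to X \times X$ as a weak equivalence followed by a fibration $\hat\rho: \hat R \to X \times X$, which is then a homotopy equivalence relation by the lifting properties. This is exactly your alternative self-contained route. Your explicit appeal to \refprop{fillersuptohomotopy} for full faithfulness is precisely what the paper encodes in \refrema{onfillersinhex}. Your caution about the transitivity clause (homotopy pullback versus weak pullback) is well placed but, as you note, is handled by the weak universal property and does not cause trouble.
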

\begin{proof}
This follows from \refprop{conntoordintheory}, but it is also quite straightforward to prove this directly. Indeed, any homotopy equivalence relation is also a pseudo-equivalence relation, so ${\rm Hex}(\ct{C})$ embeds into the category just described. Therefore it remains to check that any pseudo-equivalence relation $\rho: R \to X \times X$ is isomorphic to a homotopy equivalence relation in this category. But it can be shown quite easily using the lifting properties that if $\rho$ is factored as a homotopy equivalence $R \to \hat{R}$ followed by a fibration $\hat{\rho}: \hat{R} \to X \times X$, then $\hat{\rho}: \hat{R} \to X \times X$ is a homotopy equivalence relation.
\end{proof}

Another aspect of the theory of exact completions is that the subobject lattices of objects of the form $iX$ can be described concretely as a poset reflection.

\begin{prop}{subobjectsinHexC} Let $X$ be an object in a path category \ct{C}. The subobject lattice of $iX$ in ${\rm Hex}(\ct{C})$ is order isomorphic to the poset reflection of $\ct{C}(X)$.
\end{prop}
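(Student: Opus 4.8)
The plan is to exhibit an explicit order isomorphism between the poset reflection of $\ct{C}(X)$ and the subobject lattice of $iX = (X, PX)$ in ${\rm Hex}(\ct{C})$. By \reflemm{everymapafibr} every subobject of $(X, PX)$ is represented by a mono of the form $(A, f^*PX) \to (X, PX)$ for some fibration $f \colon A \to X$, and by \reflemm{monosinhex} such a map is indeed always monic: its domain relation $f^*PX$ coincides with the pulled-back relation, so the identity serves as the required filler. I would therefore define a map $\Phi$ from the objects of $\ct{C}(X)$ to the subobjects of $iX$, sending a fibration $f \colon A \to X$ to the subobject determined by $(A, f^*PX)$; by \reflemm{everymapafibr} this map is surjective onto the set of subobjects of $iX$.

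The heart of the matter is to show that, if the objects of $\ct{C}(X)$ are given the preorder in which $f \preceq g$ iff there is a morphism $f \to g$ in $\ct{C}(X)$, then $\Phi$ both preserves and reflects this order, i.e. $(A, f^*PX) \leq (B, g^*PX)$ as subobjects of $iX$ if and only if there is a morphism from $f$ to $g$ in $\ct{C}(X)$. For the ``if'' direction, given $u \colon A \to B$ with $gu = f$, the identity $f \times f = (g \times g)(u \times u)$ together with the universal property of the pullback $g^*PX$ supplies a canonical map $\varphi \colon f^*PX \to g^*PX$ lying over $u \times u$; this $\varphi$ is a tracking for $u$ and makes the defining square of a ${\rm Hex}(\ct{C})$-morphism commute strictly, so $u$ represents a morphism $(A, f^*PX) \to (B, g^*PX)$, and $gu = f$ says precisely that it lies over $iX$. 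For the ``only if'' direction, any morphism $(A, f^*PX) \to (B, g^*PX)$ over $iX$ is represented by some $u \colon A \to B$ in \ct{C} for which $gu$ and $f$ represent the same morphism $(A, f^*PX) \to (X, PX)$; unwinding \refdefi{hex}, this says $gu \simeq f$, and since $g$ is a fibration, \refprop{fillersuptohomotopy} yields $u' \simeq u$ with $gu' = f$, a genuine morphism $f \to g$ in $\ct{C}(X)$.

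It remains to assemble these facts. A surjective, order-preserving and order-reflecting map of preorders sends two objects to the same value exactly when they are isomorphic in the poset reflection, i.e. when $f \preceq g \preceq f$; hence $\Phi$ descends to a well-defined bijection from the poset reflection of $\ct{C}(X)$ onto the subobject lattice of $iX$, and this bijection is an order isomorphism because $\Phi$ preserves and reflects the order. (The passage through the intermediate category ${\rm Ex}(\ct{C})$ of \reftheo{hexcatoffibrantobj} could be used instead, but working directly in ${\rm Hex}(\ct{C})$ seems shortest.)

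I expect the bookkeeping identifications — recognizing the relevant monos via \reflemm{monosinhex} and computing composites in ${\rm Hex}(\ct{C})$ from the definition of composition — to be routine. The one genuinely delicate step is the equivalence in the second paragraph, and within it the move from an up-to-homotopy triangle over $X$ to a strictly commuting one using \refprop{fillersuptohomotopy}: this is exactly what forces the ``morphisms up to homotopy over $X$'' appearing in the homotopy category to collapse, after poset reflection, onto the honest morphisms of $\ct{C}(X)$.
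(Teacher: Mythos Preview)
Your proposal is correct and follows essentially the same route as the paper: both use \reflemm{everymapafibr} to reduce to representatives of the form $(A,f^*PX)$ with $f$ a fibration, both observe that any map over $X$ in $\ct{C}$ automatically acquires a tracking (the ``if'' direction), and both invoke \refprop{fillersuptohomotopy} to strictify an up-to-homotopy triangle $gu \simeq f$ into a genuine morphism $f \to g$ of $\ct{C}(X)$ (the ``only if'' direction). Your presentation is slightly more explicit in packaging the argument as a surjective, order-preserving, order-reflecting map $\Phi$ that descends to the poset reflection, but the mathematical content is the same.
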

\begin{proof}
\reflemm{everymapafibr} tells us that every subobject of $iX$ in  ${\rm Hex}(\ct{C})$ has a representative given by a map $f: (Y, R) \to (X, PX)$ where $f$ is a fibration and $R = f^*PX$. If $h: (Z, S) \to (Y, R)$ is a map over $iX$ between two such representatives $g: (Z, S) \to (X, PX)$ and $f: (Y, R) \to (X, PX)$, then $fh \simeq g$. But then there is also a map $h': Z \to Y$ homotopic to $h$ such that $fh' = g$. Since $h$ and $h'$ are homotopic, $h'$ also has a tracking as a map $(Z, S) \to (Y, R)$ in ${\rm Hex}(\ct{C})$ and as such $h$ and $h'$ represent the same map. In fact, any map $h'$ such that $fh' = g$ will have tracking as a map $(Z, S) \to (Y, R)$ because we are assuming that $S = g^* PX$ and $R = f^* PX$. It follows that the subobject lattice of $iX$ in ${\rm Hex}(\ct{C})$ is the poset reflection of $\ct{C}(X)$, as claimed.
\end{proof}

Another aspect of the classical theory of exact completions is that exact completion and slicing commute. That fails for path categories; in fact, we only have the following.
\begin{prop}{onslicing}
Let \ct{C} be a path category and $X$ be an object in \ct{C}. Then ${\rm Hex}(\ct{C})/i(X)$ is a reflective subcategory of ${\rm Hex}(\ct{C}(X))$.
\end{prop}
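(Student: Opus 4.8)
The plan is to realise ${\rm Hex}(\ct{C})/i(X)$ as a reflective subcategory of ${\rm Hex}(\ct{C}(X))$ by exhibiting an explicit full and faithful embedding $\iota$ together with a left adjoint $L$. By \reflemm{everymapafibr} every object of ${\rm Hex}(\ct{C})/i(X)$ is isomorphic to one of the form $f\colon(Y,S)\to(X,PX)$ with $f\colon Y\to X$ a fibration, and I work with these representatives. Such an $f$ makes $Y$ an object of $\ct{C}(X)$, and I let $S_{/X}\to Y\times_X Y$ be the pullback of $\sigma\colon S\to Y\times Y$ along $Y\times_X Y\to Y\times Y$; this is a fibration and inherits from $S$ the structure of a homotopy equivalence relation on $Y$ in $\ct{C}(X)$, since the reflexivity, symmetry and transitivity maps of $S$ land in $S_{/X}$ after the evident pullbacks. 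I put $\iota(f\colon(Y,S)\to(X,PX)):=(Y,S_{/X})$. A morphism of ${\rm Hex}(\ct{C})/i(X)$ is represented by some $h\colon Y\to Z$ which, after adjustment within its homotopy class via \refprop{fillersuptohomotopy}, satisfies $gh=q$; it is then a morphism of $\ct{C}(X)$, and a tracking $S\to T$ of $h$ restricts to $S_{/X}\to T_{/X}$, so I set $\iota(h):=h$. This is independent of the representative, since two homotopic $h,h'$ with $gh=gh'=q$ already agree in ${\rm Hex}(\ct{C}(X))$ — a witness being a homotopy $Y\to PZ$ composed with the map $PZ\to T$ from \reflemm{PXleastheqrel}, which factors through $T_{/X}$ because $gh=gh'$ — and one checks similarly that $\iota$ does not depend on the chosen fibrant representative of an object and is functorial.

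Faithfulness of $\iota$ is immediate: a witness $H\colon Y\to T_{/X}$ that $\iota(h)=\iota(h')$ composes with $T_{/X}\hookrightarrow T$ to a witness that $h=h'$ in ${\rm Hex}(\ct{C})/i(X)$. For fullness, let a morphism $\iota(f\colon(Y,S)\to(X,PX))\to\iota(g\colon(Z,T)\to(X,PX))$ be represented by $h\colon Y\to Z$ over $X$ with a tracking $S_{/X}\to T_{/X}$; one must upgrade this to a full tracking $S\to T$ of $h$. Fix connections on $f$ and on $g$ by \reftheo{existenceconnections}, and write $\varphi\colon S\to PX$ for the tracking of $f$. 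Given a generalised element $s$ of $S$ relating $y_0$ and $y_1$, the path $\varphi(s)\colon f(y_0)\to f(y_1)$ allows the connection on $f$ to transport $y_1$ back into the fibre over $f(y_0)$; composing with $s$ inside $S$ — legitimate because $S$ is a homotopy equivalence relation and contains all of $PY$ by \reflemm{PXleastheqrel} — yields an element of $S$ whose endpoints now have equal image in $X$, hence one of $S_{/X}$. Applying the given tracking, then re-transporting inside $Z$ along $\varphi(s)$ via the connection on $g$ (here $h$ commutes with transport up to fibrewise homotopy, by uniqueness of fillers along $w_f$ in \reflemm{exofweakfillers}), and composing in $T$ using \reflemm{PXleastheqrel} and transitivity, produces an element of $T$ relating $h(y_0)$ and $h(y_1)$. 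This recipe is natural in $s$, so by the generalised-element argument of \refrema{generalisedelements} it comes from a genuine map $S\to T$, which is a tracking of $h$ over $h\times h$; hence $\iota$ is full.

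For the reflection, given $(Y,R)$ in ${\rm Hex}(\ct{C}(X))$ — with $q\colon Y\to X$ a fibration and $R\to Y\times_X Y$ a homotopy equivalence relation over $X$ — fix a connection on $q$ and let $\hat R$ be the object whose generalised elements are tuples $(y_0,\beta,r,y_1)$ with $\beta\colon y_0\to y_0'$ a path in $Y$ and $r$ relating $y_0'$ and $y_1$ in $R$; it is an iterated pullback of a path object of $Y$ with $R$, it maps to $Y\times Y$ by $(y_0,y_1)$, and it maps to $PX$ by $Pq(\beta)$. Working in the pseudo-equivalence-relation description of ${\rm Hex}(\ct{C})$ from \refprop{thirddescrofhex} (so that $\hat R\to Y\times Y$ need not be a fibration), $\hat R$ is a pseudo-equivalence relation on $Y$ — reflexivity is clear, and symmetry and transitivity are obtained by transporting the $R$-components along the path-components, in the manner of the proofs of \refprop{pathobjheq} and \refprop{wfSDR} — tracked over $X$ by the displayed map, so $L(Y,R):=(q\colon(Y,\hat R)\to(X,PX))$ is an object of ${\rm Hex}(\ct{C})/i(X)$. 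The unit $\eta_{(Y,R)}\colon(Y,R)\to\iota L(Y,R)$ is the identity of $Y$, tracked by the map sending $r$ to $(y_0,c_{y_0},r,y_1)$ with $c_{y_0}$ the constant path; it is in general not invertible, because $\hat R_{/X}$ also contains tuples in which $\beta$ lies over a nontrivial loop of $X$. Its universal property: a morphism $\psi\colon(Y,R)\to\iota(g\colon(Z,T)\to(X,PX))$, represented by $h$ over $X$ with a tracking $R\to T_{/X}$, extends uniquely to $\bar\psi\colon L(Y,R)\to(g\colon(Z,T)\to(X,PX))$ with underlying map $h$ and tracking $\hat R\to T$ sending $(y_0,\beta,r,y_1)$ to the composite in $T$ of $Ph(\beta)$ (converted to $T$ via \reflemm{PXleastheqrel}) with the image of $r$ under $h$'s tracking; the identity $\iota\bar\psi\circ\eta=\psi$ and the uniqueness of $\bar\psi$ follow from the uniqueness clauses of \reflemm{exofweakfillers} and \reftheo{allgood}. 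Thus $L\dashv\iota$, which is the assertion; and when $\ct{C}$ has only isomorphisms as weak equivalences $\iota$ is in fact an equivalence, recovering the classical fact that exact completion commutes with slicing.

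The main obstacle is the connection-and-transport bookkeeping that pervades the last two paragraphs: because a map over $X$ commutes with transport only up to fibrewise homotopy, path objects are not strictly functorial, and homotopy equivalence relations absorb ambient paths, the fullness of $\iota$, the verification that $\hat R$ is a pseudo-equivalence relation tracked over $X$, and the universal property of $\eta$ all require repeated, careful appeals to \reftheo{existenceconnections}, \reflemm{PXleastheqrel} and the uniqueness-of-fillers results, converted into genuine morphisms through \refrema{generalisedelements}.
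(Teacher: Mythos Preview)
Your proof is correct and follows the same overall plan as the paper's: define the embedding by restricting $S$ to $S_{/X}$ over $Y\times_X Y$, and construct a left adjoint. The two proofs differ in how they organise the remaining work.

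For the left adjoint, the paper simply factors the composite $R\to Y\times_X Y\to Y\times Y$ as a weak equivalence followed by a fibration $S\to Y\times Y$ and sets $\lambda(Y,R)=(Y,S)$; you instead build the explicit pseudo-equivalence relation $\hat R=PY\times_Y R$ and invoke \refprop{thirddescrofhex}. These are isomorphic in ${\rm Hex}(\ct{C})$ since the section $R\to\hat R$ by constant paths is a weak equivalence over $Y\times Y$, so $\hat R$ and $R$ share the same fibrant replacement, namely the paper's $S$.

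For full faithfulness, you argue directly using connections and transport, while the paper instead proves $\lambda\rho\cong 1$ via a short diagram chase (pulling $S$ back along $Y\times_X Y\to Y\times_X PX\times_X Y\to Y\times Y$, using that $r\colon X\to PX$ is a weak equivalence, and closing up with the tracking $S\to PX$). The paper's route is cleaner: it sidesteps the transport bookkeeping in your fullness argument entirely, and full faithfulness of $\rho$ then drops out from the counit being an isomorphism. Your route has the compensating virtue of making the unit and its universal property explicit, at the cost of the bookkeeping you flag in your last paragraph. One small remark: the uniqueness of $\bar\psi$ does not really need \reflemm{exofweakfillers} or \reftheo{allgood}; it follows immediately because a morphism in ${\rm Hex}(\ct{C})/i(X)$ is determined by its underlying map modulo $T$-equivalence, and $T_{/X}\hookrightarrow T$.
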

\begin{proof}
Let us first take a closer look at ${\rm Hex}(\ct{C})/i(X)$. Objects in this category are morphisms $f: (Y, S) \to (X, PX)$ in ${\rm Hex}(\ct{C})$, that is, homotopy classes of arrows $f: Y \to X$ with a tracking $S \to PX$. Using the factorisation of arrows as homotopy equivalences followed by fibrations in ${\rm Ex}(\ct{C})$, we may assume that $f$ is an ${\rm Ex}(\ct{C})$-fibration. This means that we may assume that the objects in this category are pairs consisting of a fibration $f: Y \to X$ and a homotopy equivalence relation $\sigma: S \to Y \times Y$ for which there is a weak connection structure $\nabla: Y \times_X PX \to S$ as well as a map $S \to PX$ making
\diag{ S \ar@{.>}[r] \ar[d]_\sigma & PX \ar[d]^{(s, t)} \\
Y \times Y \ar[r]_{f \times f} & X \times X }
commute.

Furthermore, the morphisms in ${\rm Hex}(\ct{C})/i(X)$ are equivalence classes of arrows \[ \varphi: (g: Z \to X, R) \to (f: Y \to X, S) \] such that $f \circ \varphi \simeq g$ and for which a tracking $R \to S$ exists, while $\varphi$ and $\varphi'$ are equivalent in case there is map $H$ making
\diag{ & S \ar[d] \\
Z \ar@{.>}[ur]^H \ar[r]_(.4){(\varphi, \varphi')} & Y \times Y }
commute. Since we are assuming that $f$ is a fibration, it follows from \refprop{fillersuptohomotopy} that we may just as well assume that $\varphi$ satisfies $f \varphi = g$. If both $\varphi$ and $\varphi'$ are such representations, then they represent the same arrow in ${\rm Hex}(\ct{C})/i(X)$ if there is a dotted filler as in
\diag{ & T \ar[r] \ar[d] & S \ar[d] \\
Z \ar[r]_(.37){(\varphi, \varphi')} \ar@{.>}[ur] & Y \times_X Y \ar[r] & Y \times Y,}
where the square is a pullback.

This suggests the correct definition of the embedding $\rho: {\rm Hex}(\ct{C})/i(X) \to {\rm Hex}(\ct{C}(X))$. Note that objects in ${\rm Hex}(\ct{C}(X))$ consist of pairs $(f: Y \to X, T \to Y \times_X Y)$, where $f$ is a fibration and $T \to Y \times_X Y$ is a homotopy equivalence relation in $\ct{C}(X)$. So we can define a functor $\rho: {\rm Hex}(\ct{C})/i(X) \to {\rm Hex}(\ct{C}(X))$ by sending $(f: Y \to X, S)$ to $f$  together with the homotopy equivalence relation in $\ct{C}(X)$ obtained as the pullback
\diag{ T \ar[r] \ar[d] & S \ar[d] \\
Y \times_X Y \ar[r] & Y \times Y. }

This functor $\rho$ has a left adjoint $\lambda: {\rm Hex}(\ct{C}(X)) \to {\rm Hex}(\ct{C})/i(X)$. The quickest way to define it is to use the factorisation in \ct{C}: starting from a pair $(f: Y \to X, T \to Y \times_X Y)$ we can factor the composition of $T \to Y \times_X Y$ with the inclusion $Y \times_X Y \to Y \times Y$ as a homotopy equivalence followed by a fibration:
\diag{ T \ar[r]^{\sim} \ar[d] & S \ar[d] \\
Y \times_X Y \ar[r] & Y \times Y.}
Using the lifting properties one can now show that $S \to Y \times Y$ is a homotopy equivalence relation and that $\lambda$ defines a left adjoint to $\rho$.

To complete the proof we have to show that $\lambda \rho \cong 1$. So suppose we are given a fibration $f: Y \to X$ and a homotopy equivalence relation $\sigma: S \to Y \times Y$ for which there are a weak connection $\nabla: Y \times_X PX \to S$ as well as a tracking $S \to PX$. Construct the following four pullbacks:
\diag{ T \ar[r] \ar[d] & S^* \ar[r] \ar[d] & S \ar[d] \\
Y \times_X Y \ar[r] \ar[d] & Y \times_X PX \times_X Y \ar[r] \ar[d] & Y \times Y \ar[d] \\
X \ar[r]_r & PX \ar[r]_{(s, t)} & X \times X.}
Note that all four arrows in the lower right-hand square are fibrations; since $S \to Y \times Y$ is a fibration, the maps $S^* \to Y \times Y$ and $S^* \to PX$ are fibrations as well. From the latter it follows that $T \to S^*$ is a weak equivalence since $r: X \to PX$ is. Therefore applying $\rho$ to $(f, S)$ yields $T \to Y \times_X Y$ and the result of applying $\lambda$ to that is $S^* \to Y \times Y$. Therefore it remains to construct a suitable map $S \to S^*$ over $Y \times Y$: but the existence of such a map follows from the universal property of $S^*$ and the existence of a tracking $S \to PX$.
\end{proof}

\begin{rema}{onlyreflective}
The adjunction $\lambda \ladj \rho$ in the proof above is not an equivalence: indeed, again consider the category of topological spaces, and take for $f: Y \to X$ the universal cover $\mathbb{R} \to S^1$ of the circle and let $\Delta: Y \to Y \times_X Y$ be the diagonal. Then $\lambda(f, \Delta) \cong (\mathbb{R}, P\mathbb{R}) \cong 1$ and $\rho\lambda(f, \Delta) \cong 1$, but $(f, \Delta) \not\cong 1$. In the same way one can show that $\lambda$ does not preserve finite products (if it would our treatment of $\Pi$-types below could have been simplified considerably). For $\lambda(f, \Delta) \times \lambda(f, \Delta) \cong 1$, while $(f, \Delta) \times (f, \Delta)$ is $Y \times_X Y$ with the diagonal, so \[ \lambda((f, \Delta) \times (f, \Delta)) = (Y \times_X Y, P(Y \times_X Y)), \] which is isomorphic to $\mathbb{Z}$ with the discrete topology.
\end{rema}

In the remainder of this section we will try to characterise the image of $\rho$ in ${\rm Hex}(\ct{C}(X))$. In order to do this, we introduce the following notion.

For the moment, fix a fibration $f: Y \to X$ and a homotopy equivalence relation $\tau: T \to Y \times_X Y$ in $\ct{C}(X)$; so, in effect, we are fixing an object in ${\rm Hex}(\ct{C}(X))$.
\begin{defi}{Ttransport} A \emph{transport structure relative to $T$}, or a \emph{T-transport}, is a map $\Gamma: Y \times_X PX \to Y$ such that:
\begin{enumerate}
\item $f\Gamma = t p_2$, and
\item there is a map $L: Y \to T$ such that $\tau L = (1, \Gamma(1, rf))$.
\end{enumerate}
\end{defi}

\begin{prop}{Ttranspunique}
$T$-transports exist and are unique up to $T$-equivalence; more precisely, if $\Gamma$ and $\Gamma'$ are two $T$-transports, there will be a map $H: Y \times_X PX \to T$ such that $\tau H = (\Gamma, \Gamma')$.
\end{prop}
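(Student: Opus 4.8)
The plan is to deduce the existence of $T$-transports from \reftheo{fibrwithtransport} and the ``unique up to $T$-equivalence'' part from the weak lifting property of \reflemm{lifting}, so that there is essentially no new content beyond correctly tracking the reflexivity, symmetry and transitivity witnesses of $T$.

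For existence, I would start from an ordinary transport structure $\Gamma : P_f \to Y$ on the fibration $f : Y \to X$, which exists by \reftheo{fibrwithtransport}; thus $f\Gamma = p_f = tp_2$ (giving clause (1) of \refdefi{Ttransport}) and $\Gamma w_f \simeq_X 1_Y$, the latter witnessed by some $k : Y \to P_X(Y)$ into the path object of $Y$ in $\ct{C}(X)$. Since $P_X(Y)$ is the least homotopy equivalence relation on $Y$ in the path category $\ct{C}(X)$, \reflemm{PXleastheqrel} (applied in $\ct{C}(X)$) provides a comparison map $h : P_X(Y) \to T$ compatible with the source and target maps; composing $h$ with $k$ and then with the symmetry map of $\tau : T \to Y\times_X Y$ produces a map $L : Y \to T$ with $\tau L = (1_Y, \Gamma w_f) = (1_Y, \Gamma(1, rf))$, which is clause (2). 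Hence any ordinary transport structure on $f$ is already a $T$-transport.

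For uniqueness, let $\Gamma, \Gamma'$ be two $T$-transports, with witnesses $L, L' : Y \to T$ as in clause (2). Since $f\Gamma = tp_2 = f\Gamma'$, the pair $(\Gamma,\Gamma')$ factors through $Y\times_X Y$. The key intermediate step is to assemble a map $M : Y \to T$ with $\tau M = (\Gamma w_f, \Gamma' w_f)$: writing $\varsigma$ and $\vartheta$ for the symmetry and transitivity maps of the homotopy equivalence relation $\tau$, the map $\varsigma L$ satisfies $\tau(\varsigma L) = (\Gamma w_f, 1_Y)$, its target and the source of $L'$ both equal $1_Y$, so $(\varsigma L, L')$ defines a map into the pullback $T\times_Y T$, and $M := \vartheta(\varsigma L, L')$ does the job. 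Then the square with $w_f : Y \to P_f$ on the left, $\tau$ on the right, $M$ on top and $(\Gamma,\Gamma')$ along the bottom commutes, since $(\Gamma,\Gamma')w_f = (\Gamma w_f, \Gamma' w_f) = \tau M$; as $w_f$ is a weak equivalence (\refprop{factlemma}) and $\tau$ is a fibration, \reflemm{lifting} yields a lower filler $H : Y\times_X PX \to T$ with $\tau H = (\Gamma,\Gamma')$, the desired $T$-equivalence. I do not anticipate a genuine obstacle; the only points requiring care are getting the pullback and projections right when building $M$ from $L$ and $L'$, and noting that $\tau : T \to Y\times_X Y$, being a homotopy equivalence relation in $\ct{C}(X)$, is in particular a fibration in $\ct{C}$ so that \reflemm{lifting} applies.
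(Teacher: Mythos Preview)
Your proof is correct and follows essentially the same approach as the paper: existence via an ordinary transport structure promoted to a $T$-transport through the comparison map $P_X(Y) \to T$, and uniqueness by building a $T$-witness $M$ over $w_f = (1,rf)$ and then lifting against the fibration $\tau$. The only difference is that you spell out explicitly the use of symmetry and transitivity of $T$ to produce $M$, whereas the paper compresses this into the single observation that $\Gamma(1,rf)$ and $\Gamma'(1,rf)$ are each $T$-equivalent to the identity.
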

\begin{proof}
For $T = P_X(Y)$ a $T$-transport structure is the same thing as an ordinary transport structure. Because there will always be a map $P_X(Y) \to T$ over $Y \times_X Y$, every ordinary transport structure is also a transport structure relative to $T$. In particular, transport structures relative to $T$ exist since ordinary ones do.

To show essential uniqueness, let $\Gamma$ and $\Gamma'$ be two $T$-transports. Then $\Gamma(1, rf)$ and $\Gamma'(1, rf)$ will be $T$-equivalent, as they are both $T$-equivalent to the identity on $Y$. This means that there is a map $K$ making the square
\diag{ Y \ar[d]_{(1, rf)} \ar[r]^K & T \ar[d]^\tau \\
Y \times_X PX \ar[r]_{(\Gamma, \Gamma')} & Y \times_X Y }
commute. But since $\tau$ is a fibration and $(1, rf)$ is a weak equivalence, we get the desired map $H$ from the usual lifting properties.
\end{proof}

\begin{prop}{TtransppresTequivalence}
$T$-transports preserve $T$-equivalence. More precisely, if $\Gamma$ is a $T$-transport, there will be a map $H: T \times_X PX \to T$ such that \[ \tau_1 H = \Gamma(\tau_1 p_1, p_2) \quad \mbox{ and } \quad \tau_2 H = \Gamma(\tau_2 p_1, p_2). \]
\end{prop}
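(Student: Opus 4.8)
The plan is to obtain $H$ as a lower filler, in the sense of \reflemm{lifting}, of a suitable commutative square whose left-hand map is a weak equivalence identifying $T$ with the ``constant paths'' inside $T\times_X PX$. Write $\pi_T := f\tau_1 = f\tau_2\colon T\to X$ for the structure map exhibiting $T$ as an object of $\ct{C}(X)$, and set $j := (1_T, r\pi_T)\colon T\to T\times_X PX$. Since $T\times_X PX$ is the pullback of $s\colon PX\to X$ along $\pi_T$ and $s$ is an acyclic fibration, the projection $p_1\colon T\times_X PX\to T$ is an acyclic fibration; as $p_1 j = 1_T$, the map $j$ is a section of $p_1$ and hence a weak equivalence.

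First I would record that $(\Gamma(\tau_1 p_1, p_2), \Gamma(\tau_2 p_1, p_2))\colon T\times_X PX\to Y\times Y$ factors through $Y\times_X Y$: since $f\Gamma = tp_2$, both of its components compose to $tp_2\colon T\times_X PX\to X$ after $f$. Call the resulting map $G\colon T\times_X PX\to Y\times_X Y$. Precomposing with $j$ and using $p_2 j = r\pi_T$ together with the identity $(\tau_i p_1, p_2)\circ j = (\tau_i, r\pi_T) = (1_Y, rf)\circ\tau_i$ gives $G\circ j = \big(\Gamma(1_Y, rf)\,\tau_1,\ \Gamma(1_Y, rf)\,\tau_2\big)$.

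Next I would construct a map $k\colon T\to T$ with $\tau k = \big(\Gamma(1_Y, rf)\,\tau_1,\ \Gamma(1_Y, rf)\,\tau_2\big)$; intuitively, $k$ sends an element of $T$ joining $y_0$ and $y_1$ to one joining $\Gamma(y_0, rfy_0)$ and $\Gamma(y_1, rfy_1)$. This is where one uses that $T$ is a homotopy equivalence relation on $Y$ in $\ct{C}(X)$, with symmetry $\sigma_T$ and transitivity operation $m$, and that the witness $L\colon Y\to T$ from \refdefi{Ttransport} joins $y$ to $\Gamma(y, rfy)$ (so $\tau_1 L = 1_Y$, $\tau_2 L = \Gamma(1_Y, rf)$): one sets $k := m\big(m(\sigma_T L\tau_1,\ 1_T),\ L\tau_2\big)$, the source/target matchings needed for the two applications of $m$ being immediate from $\tau_1 L = 1_Y$, $\tau_1\sigma_T = \tau_2$ and $\tau_2\sigma_T = \tau_1$. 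As in \refrema{generalisedelements}, this is cleanest to check on generalised elements and then read off as an honest morphism. With this $k$, the square with $j$ on the left, $k$ on top, $G$ along the bottom and the fibration $\tau$ on the right commutes, so \reflemm{lifting} yields $H\colon T\times_X PX\to T$ with $\tau H = G$, i.e. $\tau_1 H = \Gamma(\tau_1 p_1, p_2)$ and $\tau_2 H = \Gamma(\tau_2 p_1, p_2)$, as required.

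I do not expect a genuine obstacle: the argument is a routine application of \reflemm{lifting} once the right square has been assembled, and no strengthened lifting property is needed. The only thing demanding care is the bookkeeping — verifying the pullback compatibilities that make $j$, $G$ and the composite defining $k$ well defined, and that the square commutes; in particular it is worth keeping an eye on the notational clash between the fibration $\tau\colon T\to Y\times_X Y$ and the transitivity operation (renamed $m$ above).
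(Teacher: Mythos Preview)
Your proposal is correct and follows essentially the same approach as the paper: both set up a square with the weak equivalence $j=(1_T,r\pi_T)=(1,rf\tau_1)$ on the left and the fibration $\tau$ on the right, produce a top map $T\to T$ witnessing $\Gamma(1,rf)\tau_1 \simeq_T \tau_1 \simeq_T \tau_2 \simeq_T \Gamma(1,rf)\tau_2$, and then take a lower filler. The only difference is that you spell out the construction of the top map explicitly as $m(m(\sigma_T L\tau_1,1_T),L\tau_2)$, whereas the paper simply asserts its existence from the chain of $T$-equivalences; your added bookkeeping is accurate.
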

\begin{proof}
If $\Gamma$ is a $T$-transport, then
\[ \Gamma(1, rf)\tau_1 \simeq_T \tau_1 \simeq_T \tau_2 \simeq_T \Gamma(1, rf)\tau_2: T \to Y. \]
Therefore there is a map $K$ making the diagram
\diag{ T \ar[rrrrr]^K \ar[d]_{(1, rf\tau_1)} & & & & & T \ar[d]^\tau \\
T \times_X PX \ar[rr]_(.4){\tau \times_X 1} & & Y \times_X Y \times_X PX \ar[rrr]_(.55){(\Gamma(p_1, p_3), \Gamma(p_2, p_3))} & & & Y \times_X Y }
commute, and $H$ is obtained as a lower filler of this diagram.
\end{proof}

\begin{defi}{stableelement} Let $(f, T)$ be an element of ${\rm Hex}(\ct{C}(X))$, so $f: Y \to X$ is a fibration and $T \to Y \times_X Y$ is a homotopy equivalence relation in $\ct{C}(X)$. We call such an object \emph{stable} if the action of loops in $X$ on the fibres of $f$ by the (essentially unique) $T$-transport $\Gamma: Y \times_X PX \to Y$ is $T$-trivial: so if $f(y) = x$ and $\alpha$ is a loop at $x$, then $\Gamma_\alpha(y) \simeq_T y$.
\end{defi}

\begin{theo}{equivwithstableelements}
Let \ct{C} be a path category and $X$ be an object in \ct{C}. Then ${\rm Hex}(\ct{C})/i(X)$ is equivalent to the full subcategory of ${\rm Hex}(\ct{C}(X))$ consisting of the stable objects.
\end{theo}
\begin{proof} In \refprop{onslicing} we have shown that ${\rm Hex}(\ct{C})/i(X)$ is a reflective subcategory of ${\rm Hex}(\ct{C}(X))$ and we gave explicit constructions of both the embedding $\rho$ and reflector $\lambda$ in the proof of that proposition. Note that objects in the image of $\rho$ are always stable: for suppose $T$ is the restriction to $Y \times_X Y$ of some homotopy equivalence relation $\sigma: S \to Y \times Y$ over $(X, PX)$. We may assume that $f: (Y, S) \to (X, PX)$ is an ${\rm Ex}(\ct{C})$-fibration, so that there is a weak connection structure $\nabla: Y \times_X PX \to S$. From this we obtain a $T$-transport $\Gamma$ given by $\Gamma = \sigma_2 \nabla$. If $f(y) = x$ and $\alpha$ is a loop at $x$, the weak connection $\nabla$ tells us that $\Gamma_\alpha(y) \simeq_S y$; but then also $\Gamma_\alpha(y) \simeq_T y$, by definition of $T$.

Conversely, let $(f: Y \to X, \tau: T \to Y \times_X Y)$ be an element of ${\rm Hex}(\ct{C}(X))$, and let $\Gamma: Y \times_X PX \to Y$ be the essentially unique $T$-transport. Compute the following pullbacks:
\diag{
T^* \ar[d]_{\tau^*}  \ar[r] &  S \ar[d] \ar[r] &  T \ar[d]^{\tau}  \\
Y \times_X Y \ar[r] \ar[d] &  Y \times_X PX \times_X Y \ar[dr] \ar[r]_(.6){\Gamma \times_X 1} \ar[d] & Y \times_X Y  \\
 X \ar[r]_r & PX \ar[dr]_{(s,t)} &  Y \times Y \ar[d]^{f \times f}  \\
& &   X \times X.}
In terms of generalised elements, \[ T^* = \{ \, (y, t, y\rq{}) \, : \, \tau(t) = (\Gamma(y,rfy), y\rq{}) \, \}.\] Since $\Gamma$ is a $T$-transport it follows that $\Gamma(y,rfy) \simeq_T y$ and therefore $(Y, T)$ and $(Y, T^*)$ are isomorphic in ${\rm Hex}(\ct{C}(X))$. So in order to compute $\lambda(Y, T)$ we might just as well compute $\lambda(Y, T^*)$, and because $T^* \to S$, as a pullback of $r: X \to PX$ along a fibration, is a weak equivalence, we see that $\lambda(Y, T^*)$ is $(Y, S \to Y \times Y)$. Therefore $\rho\lambda(Y, T)$ is the element in ${\rm Hex}(\ct{C}(X))$ consisting of $f: Y \to X$ together with the following homotopy equivalence relation in $\ct{C}(X)$: $y_1$ and $y_2$ over the same $x$ are related if there is a loop $\alpha$ on $x$ such that $\Gamma_\alpha(y_1) \simeq_T y_2$. But if $(Y, T)$ is stable, this is equivalent to $y_1 \simeq_T y_2$; so in this case $\rho\lambda(Y, T) \cong (Y, T)$.
\end{proof}

This theorem gives us a useful way of thinking about the slice category ${\rm Hex}(\ct{C})/i(X)$: especially when we have to deal with function spaces in ${\rm Hex}(\ct{C})/i(X)$, it is more  convenient to think about the stable elements in ${\rm Hex}(\ct{C}(X))$.

\section{Sums and the natural numbers object}

This section will be devoted to a study of the homotopy initial objects and homotopy sums in a path category, as well as a homotopy-theoretic version of the natural numbers object. We will define these as objects that become ordinary sums or the usual natural numbers object in the homotopy category. 

\subsection{Definition.} We will start our discussion with the homotopy versions of the initial object and binary coproducts.

\begin{defi}{hinitialobj}
An object $0$ is \emph{homotopy initial} if for any object $A$ there is a map $f: 0 \to A$ and any two such maps are homotopic. A \emph{homotopy sum} or \emph{homotopy coproduct} of two objects $A$ and $B$ is an object $A + B$ together with two maps $i_A: A \to A + B$ and $i_B: B \to A + B$ such that for any pair of maps $f: A \to X$ and $g: B \to X$ there is a map $h: A + B \to X$, unique up to homotopy, such that $hi_A \simeq f$ and $hi_B \simeq g$.
\end{defi}

This is not quite what the type theorist would expect: the type-theoretic axiom for the initial object, for example, says that any fibration $A \to 0$ has a section. However, this condition turns out to be equivalent.
\begin{prop}{othercharhinitialobj}
In a path category an object $0$ is homotopy initial if and only if any fibration $f: A \to 0$ has a section.
\end{prop}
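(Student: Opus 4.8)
The plan is to prove the two implications separately, each by a short direct construction.

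For the forward direction, suppose $0$ is homotopy initial and $f \colon A \to 0$ is a fibration. Homotopy initiality supplies a map $s \colon 0 \to A$, and then $fs$ and $1_0$ are two parallel maps $0 \to 0$, hence homotopic — again by homotopy initiality, applied to the object $0$ itself. Thus we have a triangle over $0$ with the fibration $f$ on the right which commutes up to homotopy, so by \refprop{fillersuptohomotopy} we may replace $s$ by a homotopic map $s' \colon 0 \to A$ with $fs' = 1_0$; this $s'$ is the desired section.

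For the converse, assume every fibration with codomain $0$ has a section, and let $X$ be arbitrary. To obtain a map $0 \to X$, note that the projection $X \times 0 \to 0$ is a fibration (all projection maps are fibrations in a path category), hence has a section; composing it with the projection $X \times 0 \to X$ gives a map $0 \to X$. For uniqueness up to homotopy, let $f, g \colon 0 \to X$ be given, choose a path object $(s,t) \colon PX \to X \times X$ for $X$, and form the pullback $E$ of $(s,t)$ along $(f,g) \colon 0 \to X \times X$. The induced map $E \to 0$ is a fibration, so it has a section, and composing that section with $E \to PX$ yields a map $h \colon 0 \to PX$ satisfying $sh = f$ and $th = g$. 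Hence $h$ is a homotopy $f \simeq g$, and $0$ is homotopy initial.

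There is no substantial obstacle: both directions are essentially formal. The only points needing a moment's attention are that the homotopy relation is independent of the chosen path object (so the $h$ just built genuinely witnesses $f \simeq g$), which has already been established, and — in the forward direction — that one is entitled to apply homotopy initiality to maps whose codomain is $0$ itself, which is of course legitimate.
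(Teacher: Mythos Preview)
Your proof is correct and follows essentially the same route as the paper's: both directions use the same constructions (applying \refprop{fillersuptohomotopy} to strictify a homotopy section in the forward direction, and using the projection $X \times 0 \to 0$ and a pullback of the path-object fibration in the converse). Your write-up is in fact slightly more explicit than the paper's in justifying why $fs \simeq 1_0$.
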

\begin{proof}
Suppose we are given a fibration $f: A \to 0$. If $0$ is homotopy initial, then there is a map $g: 0 \to A$ with $fg \simeq 1$. So by \refprop{fillersuptohomotopy} there is a map $g': 0 \to A$ such that $fg' = 1$.

Conversely, suppose $0$ is such that any fibration $A \to 0$ has a section. For any object $B$ the second projection $\pi_2: B \times 0 \to 0$ is a fibration, so there is a map $a: 0 \to B \times 0$ such that $\pi_2 a = 1$; but then $f = \pi_1 a$ is a map $0 \to B$. In addition, if $g: 0 \to B$ is another map, then we can take the pullback
\diag{ Q \ar[d] \ar[r] & PB \ar[d]^{(s, t)} \\
0 \ar[r]_(.4){(f, g)} & B \times B}
giving rise to a fibration $Q \to 0$. This map has a section, and composing this section with the map $Q \to PB$ gives rise to a homotopy between $f$ and $g$.
\end{proof}

In the same way one has:

\begin{prop}{othercharhomsum}
An object $A + B$ together with maps $i_A: A \to A + B$ and $i_B: B \to A + B$ is the homotopy sum of $A$ and $B$ if and only if for any fibration $p: C \to A + B$ and any pair of maps $a: A \to C$ and $b: B \to C$ such that $pa = i_A$ and $pb = i_B$, there is a map $\sigma: A + B \to C$ such that $p\sigma = 1, \sigma i_A \simeq a$ and $\sigma i_B \simeq b$.
\end{prop}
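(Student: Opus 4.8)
The plan is to mimic the proof of \refprop{othercharhinitialobj}, the two main tools being \refprop{fillersuptohomotopy} (which replaces a map making a triangle commute up to homotopy over a fibration by a homotopic one that makes it commute strictly) and the fact that $\simeq$ is a congruence (\reftheo{homotopyiscongr}).

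For the ``only if'' direction I would start from a fibration $p: C \to A+B$ together with maps $a: A \to C$, $b: B \to C$ satisfying $pa = i_A$ and $pb = i_B$. Feeding $a$ and $b$ into the universal property of the homotopy sum produces $h: A+B \to C$ with $hi_A \simeq a$ and $hi_B \simeq b$; composing with $p$ gives $phi_A \simeq i_A$ and $phi_B \simeq i_B$, so the uniqueness clause of the universal property (applied with $X = A+B$, $f = i_A$, $g = i_B$) forces $ph \simeq 1_{A+B}$. Since $p$ is a fibration, \refprop{fillersuptohomotopy} then yields a map $\sigma: A+B \to C$ with $\sigma \simeq h$ and $p\sigma = 1$; and $\sigma i_A \simeq a$, $\sigma i_B \simeq b$ follow because $\sigma \simeq h$.

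For the ``if'' direction I would verify the universal property of the homotopy sum under the stated lifting hypothesis. Given $f: A \to X$ and $g: B \to X$, I would apply the hypothesis to the fibration $\pi_2: X \times (A+B) \to A+B$ and the maps $(f, i_A): A \to X \times (A+B)$ and $(g, i_B): B \to X \times (A+B)$, which lie over $i_A$ and $i_B$; this gives $\sigma: A+B \to X \times (A+B)$ with $\sigma i_A \simeq (f, i_A)$ and $\sigma i_B \simeq (g, i_B)$, so that $h := \pi_1\sigma$ satisfies $hi_A \simeq f$ and $hi_B \simeq g$ since $\simeq$ is a congruence. For the uniqueness clause, given $h, h': A+B \to X$ with $hi_A \simeq f \simeq h'i_A$ and $hi_B \simeq g \simeq h'i_B$, I would pick a path object $(s,t): PX \to X \times X$, pull it back along $(h, h'): A+B \to X\times X$ to obtain a fibration $q: Q \to A+B$ with second projection $k: Q \to PX$, and note that the chosen homotopies $hi_A \simeq h'i_A$ and $hi_B \simeq h'i_B$ determine maps $A \to Q$ and $B \to Q$ lying over $i_A$ and $i_B$. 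The hypothesis then provides a section $\sigma$ of $q$, and $k\sigma: A+B \to PX$ is the desired homotopy $h \simeq h'$.

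I do not expect a serious obstacle here; the only point that needs care is that the given lifting property is phrased for fibrations over $A+B$, so in the existence step one must enlarge the target $X$ to $X \times (A+B)$, and in the uniqueness step one must take the pullback of the path object along $(h,h')$ rather than working over $1$ — exactly the adjustment that already occurs in the proof of \refprop{othercharhinitialobj} for the homotopy initial object.
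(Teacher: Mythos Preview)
Your proposal is correct and follows essentially the same line as the paper's proof: both directions use \refprop{fillersuptohomotopy} and the product/pullback tricks in exactly the way you describe. Your uniqueness argument is in fact slightly cleaner than the paper's: where the paper observes $(h,h')i_A \simeq (f,f) = (s,t)rf$ and then invokes \refprop{fillersuptohomotopy} to obtain a strict lift $u: A \to PX$ with $(s,t)u = (h,h')i_A$, you simply take the homotopy $hi_A \simeq h'i_A$ itself (which exists by transitivity) as the required map $A \to PX$ over $(h,h')i_A$.
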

\begin{proof}
$\Rightarrow$: Suppose we are given a fibration $p: C \to A + B$ together with maps $a: A \to C$ and $b: B \to C$ such that $pa = i_A$ and $pb = i_B$. We know that there is a map $h: A + B \to C$ such that $h i_A \simeq a$ and $h i_B \simeq b$. In addition, we must have $p h \simeq 1$, so by \refprop{fillersuptohomotopy} there is a map $\sigma: A + B \to C$ such that $p \sigma = 1$ and $\sigma \simeq h$; hence $\sigma i_A \simeq hi_A \simeq a$ and $\sigma i_B \simeq hi_B \simeq b$.

$\Leftarrow$: Let $f: A \to X$ and $g: B \to X$ be two maps. We want to show that there is a map $h: A + B \to X$, unique up to homotopy, such that $hi_A \simeq f$ and $hi_B \simeq g$. Put $C = X \times (A + B)$ and consider the projection $\pi_2: C \to A + B$ together with the maps $(f, i_A): A \to C$ and $(g, i_B): B \to C$. By assumption, there is a map $\sigma: A + B \to C$ such that $\pi_2 \sigma = 1$, $\sigma i_A \simeq (f, i_A)$, $\sigma i_B \simeq (g, i_B)$. So if we put $h = \pi_1 \sigma: A + B \to X$, then $h i_A \simeq f$ and $h i_B \simeq g$, as desired. If $h': A + B \to X$ satisfies the same equations, then we can take the following pullback:
\diag{ Q \ar[r]^{p_2} \ar[d]_{p_1} & PX \ar[d]^{(s, t)} \\
A + B \ar[r]_(.45){(h, h')} & X \times X}
giving rise to a fibration $Q \to A + B$. In addition, since $(h, h') i_A \simeq (f, f) = (s,t)rf$, there is a map $u: A \to PX$ such that $(s, t)u = (h, h')i_A$ and a map $k: A \to Q$ such that $p_1k = i_A$; similarly, there is a map $l: B \to Q$ such that $p_1l = i_B$. So $p_1$ has a section and composing this section with $p_2$ yields the desired homotopy between $h$ and $h'$.
\end{proof}

\begin{prop}{propofsumsinfibcategories}
Suppose \ct{C} is a path category with homotopy sums.
\begin{enumerate}
\item[(i)] If 0 is homotopy initial, then $0 + X \simeq X$ for any object $X$.
\item[(ii)] $f + g: X + Y \to A + B$ will be a homotopy equivalence if both $f: X \to A$ and $g: Y \to B$ are.
\item[(iii)] $P(A + B) \simeq PA + PB$.
\end{enumerate}
\end{prop}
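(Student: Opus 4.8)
The plan is to work entirely through the universal property of homotopy sums from \refdefi{hinitialobj}, using throughout two facts: first, two maps $A+B\to Z$ that become homotopic after precomposition with both $i_A$ and $i_B$ are themselves homotopic (this is the uniqueness clause of the universal property); second, $\simeq$ is a congruence (\reftheo{homotopyiscongr}), so that we may freely pre- and post-compose homotopies with fixed maps.

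For (i), since $0$ is homotopy initial there is a map $f: 0\to X$, and applying the universal property of $0+X$ to the pair $(f,1_X)$ yields $h: 0+X\to X$ with $hi_0\simeq f$ and $hi_X\simeq 1_X$. I claim $i_X$ is a homotopy inverse of $h$. One composite is handled by $hi_X\simeq 1_X$. For the other, $i_X h\,i_0\simeq i_X f$, and $i_X f$ and $i_0$ are both maps out of the homotopy initial object $0$, hence homotopic; also $i_X h\,i_X\simeq i_X\cdot 1_X = i_X$. So $i_X h$ and $1_{0+X}$ agree up to homotopy after precomposition with $i_0$ and with $i_X$, whence $i_X h\simeq 1_{0+X}$. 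Thus $i_X: X\to 0+X$ is a homotopy equivalence and $0+X\simeq X$.

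For (ii), recall that $f+g: X+Y\to A+B$ is the (homotopy-unique) map with $(f+g)i_X\simeq i_A f$ and $(f+g)i_Y\simeq i_B g$. Let $f': A\to X$ and $g': B\to Y$ be homotopy inverses of $f$ and $g$ (which exist since, by \reftheo{weisheq}, homotopy equivalences have homotopy inverses), and form $f'+g': A+B\to X+Y$ in the same way, for some choice of representative. A chain of congruences gives $(f'+g')(f+g)i_X\simeq (f'+g')i_A f\simeq i_X f' f\simeq i_X$, and likewise $(f'+g')(f+g)i_Y\simeq i_Y$, so $(f'+g')(f+g)\simeq 1_{X+Y}$ by uniqueness; symmetrically $(f+g)(f'+g')\simeq 1_{A+B}$. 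Hence $f+g$ is a homotopy equivalence. Part (iii) is then immediate: the maps $r_A: A\to PA$ and $r_B: B\to PB$ are weak equivalences, hence homotopy equivalences by \reftheo{weisheq}, so $PA\simeq A$ and $PB\simeq B$; by (ii), $PA+PB\simeq A+B$; and since $r_{A+B}: A+B\to P(A+B)$ is also a homotopy equivalence, $P(A+B)\simeq A+B\simeq PA+PB$.

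I do not anticipate a real obstacle. The only points requiring care are the bookkeeping of homotopies in (ii)---each step must be a legitimate pre- or post-composition of a homotopy by a fixed map, licensed by \reftheo{homotopyiscongr}, rather than an illegitimate move---and the small observation underpinning (i) that any two maps out of a homotopy initial object are homotopic, which is built into \refdefi{hinitialobj}. As an alternative to the route through (ii), one could prove (iii) directly by equipping $PA+PB$ with the structure of a path object on $A+B$: the map $r_A+r_B: A+B\to PA+PB$ is a weak equivalence by (ii) and \reftheo{weisheq}, and one factors the composite $PA+PB\to(A+B)\times(A+B)$ as a weak equivalence followed by a fibration, using \refprop{fillersuptohomotopy} to rectify the resulting factorisation of $\Delta_{A+B}$ on the nose and then invoking the uniqueness of path objects \refcoro{uniquenessoffact}; but the argument via (ii) is considerably shorter.
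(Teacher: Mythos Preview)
Your proof is correct and follows essentially the same approach as the paper. The paper dispatches (i) and (ii) in one line by passing to the homotopy category---where homotopy sums and the homotopy initial object become ordinary sums and an ordinary initial object---which amounts to exactly the universal-property arguments you spell out by hand; for (iii) the paper likewise uses (ii) to see that $A+B \to PA+PB$ is a weak equivalence and then constructs a specific comparison map $PA+PB \to P(A+B)$ via a lifting, whereas your transitivity argument through $A+B$ reaches the same conclusion with one fewer step.
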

\begin{proof} Parts (i) and (ii) are immediate consequences of the fact that homotopy equivalences are precisely those maps which become isomorphisms in the homotopy category, while homotopy initial objects become initial objects and homotopy sums become ordinary sums in the homotopy category.

(iii): It follows from (ii) that the canonical map $A + B \to PA + PB$ is a weak equivalence.  So the lifting properties give us a map $PA + PB \to P(A + B)$ making the top triangle in
\diag{ A + B \ar[rr] \ar[d] & & P(A + B) \ar[d] \\
PA + PB \ar[r] \ar@{.>}[urr] & A \times A + B \times B \ar[r] & (A + B) \times (A + B) }
commute up to homotopy. Since the map along the top is a homotopy equivalence, so is $PA + PB \to P(A + B)$.
\end{proof}

\subsection{Homotopy extensive path categories.}

For later purposes we do not only need homotopy sums to exist, but they should also have properties like disjointness and stability, as ordinary categorical sums have in an extensive category (see \cite{carbonietal93}). So we need a suitable notion of extensivity for path categories.

\begin{defi}{homextensive} Suppose \ct{C} is a path category.
\begin{enumerate}
\item A homotopy sum $A+B$ in \ct{C} is \emph{stable}, if for any diagram of the form
\diag{ C \ar[r] \ar[d] & X \ar[d] & D \ar[d] \ar[l] \\
A \ar[r] & A + B & B, \ar[l] }
the top row is a homotopy coproduct whenever both squares are homotopy pullbacks.
\item A homotopy sum $A + B$ is \emph{disjoint} if the square
\diag{ 0 \ar[r] \ar[d] & B \ar[d] \\
A \ar[r] & A + B }
is a homotopy pullback.
\item If \ct{C} has a homotopy inital object and homotopy sums which are both stable and disjoint, then \ct{C} will be called \emph{homotopy extensive}.
\end{enumerate}
\end{defi}

\begin{prop}{consequencesofstabilityofsums}
Let \ct{C} be a path category with stable homotopy sums and a homotopy initial object.
\begin{enumerate}
\item[(i)] The distributive law $X \times (A + B) \simeq X \times A + X \times B$ holds.
\item[(ii)] The homotopy initial object $0$ is strict: any map $X \to 0$ is a homotopy equivalence.
\item[(iii)] The functor $\ct{C}(A + B) \to \ct{C}(A) \times \ct{C}(B)$ is homotopy conservative (i.e., detects homotopy equivalences).
\end{enumerate}
\end{prop}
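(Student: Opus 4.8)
The plan is to deduce all three statements from the stability of homotopy sums, using a single observation: if $p\colon E\to A+B$ is a \emph{fibration}, then the strict pullbacks of $p$ along the coprojections $\iota_A\colon A\to A+B$ and $\iota_B\colon B\to A+B$ exist (pullbacks of fibrations exist and are fibrations) and are homotopy pullbacks (a strict pullback of a fibration computes the homotopy pullback, cf.\ \refrema{comphompbks}); hence stability applies and exhibits $E$ as a homotopy coproduct of these two pullbacks, with the pullback projections as coprojections.

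For (i) I would apply this to the projection $\pi\colon X\times(A+B)\to A+B$, which is a fibration. Its strict pullback along $\iota_A$ is canonically $X\times A$, with comparison map $1_X\times\iota_A\colon X\times A\to X\times(A+B)$, and similarly along $\iota_B$. Both squares are homotopy pullbacks, so stability of the sum $A+B$ says that $X\times A\to X\times(A+B)\leftarrow X\times B$ is a homotopy coproduct, which is exactly the distributive equivalence $X\times(A+B)\simeq X\times A+X\times B$.

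For (iii), the functor in question sends a fibration $E\to A+B$ to the pair $(\iota_A^*E\to A,\ \iota_B^*E\to B)$; it is well defined since pullbacks of fibrations exist, and it preserves fibrations and weak equivalences by \refprop{presbypb}. Given $E\to A+B$, the observation above presents $E$ as the homotopy coproduct of $\iota_A^*E$ and $\iota_B^*E$ with the pullback projections as coprojections, and likewise for $E'$. A morphism $\phi$ over $A+B$ intertwines these coprojections with its restrictions $\iota_A^*\phi$ and $\iota_B^*\phi$, so in ${\rm Ho}(\ct{C})$ it becomes the coproduct map $\iota_A^*\phi\amalg\iota_B^*\phi$. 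Hence, if the restrictions are homotopy equivalences, i.e.\ isomorphisms in ${\rm Ho}(\ct{C})$, then $\phi$, being a coproduct of isomorphisms, is an isomorphism in ${\rm Ho}(\ct{C})$, hence a weak equivalence by \reftheo{weisheq}, hence a homotopy equivalence in $\ct{C}(A+B)$.

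Part (ii) is where a genuine idea is needed, and I expect the last step to be the main obstacle. The plan is to apply stability to the \emph{degenerate} sum $0+0$. Being a homotopy coproduct of homotopy initial objects, $0+0$ is itself homotopy initial, so \emph{both} coprojections $\iota_1,\iota_2\colon 0\to 0+0$ are homotopy equivalences (any map between two homotopy initial objects is automatically one). Given $f\colon X\to 0$, I would form the homotopy pullbacks $P_1$ and $P_2$ of $\iota_1 f\colon X\to 0+0$ along $\iota_1$ and along $\iota_2$; since homotopy pullbacks of homotopy equivalences are homotopy equivalences (\reflemm{propofhompbs}(i)), the legs $P_i\to X$ are homotopy equivalences, so $P_1\simeq X\simeq P_2$, and stability exhibits $X$ as a homotopy coproduct of $P_1$ and $P_2$ whose two coprojections $P_i\to X$ are homotopy equivalences. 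The remaining point is that an object $X$ which is a homotopy coproduct of two summands via coprojections that are \emph{themselves} homotopy equivalences must be homotopy initial; a naive diagram chase (e.g.\ that coprojections are jointly epic) yields nothing here. Instead I would argue in ${\rm Ho}(\ct{C})$: for any object $Y$ the canonical bijection ${\rm Hom}(X,Y)\cong{\rm Hom}(P_1,Y)\times{\rm Hom}(P_2,Y)$ has as its first component ``restriction along the coprojection $P_1\to X$'', which is itself a bijection because that coprojection is an isomorphism; since every object admits a map from $0$, hence from $X$, all these hom-sets are nonempty, and a product projection onto a nonempty set is a bijection only when the other factor is a singleton. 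Thus ${\rm Hom}(P_2,Y)$ is a singleton for every $Y$, so $P_2$ is initial in ${\rm Ho}(\ct{C})$, and therefore so is $X\simeq P_2$; that is, $f$ is a homotopy equivalence.
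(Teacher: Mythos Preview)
Your arguments for (i) and (iii) match the paper's essentially verbatim: apply stability to the projection $X\times(A+B)\to A+B$ for (i), and for (iii) identify $E$ and $E'$ with the homotopy coproducts of their restrictions so that $\phi$ becomes a coproduct of its restrictions in ${\rm Ho}(\ct{C})$.

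Your proof of (ii) is correct, but the paper's route is considerably shorter. You pass through $0+0$, note that its coprojections are homotopy equivalences, pull $\iota_1 f$ back along both, and then run a hom-set counting argument to conclude that one summand (hence $X$) is initial in ${\rm Ho}(\ct{C})$. The paper instead observes that $0$ itself, with identity coprojections, is already a homotopy coproduct $0+0$ (any two maps out of $0$ are homotopic), and applies stability directly to the diagram
\[
\xymatrix{ X \ar[d]_f \ar[r]^1 & X \ar[d]^f & X \ar[d]^f \ar[l]_1 \\
0 \ar[r]_1 & 0 & 0 \ar[l]^1 }
\]
in which both squares are trivially homotopy pullbacks. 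Stability then says the top row is a homotopy coproduct, i.e.\ $X$ is its own homotopy coproduct with itself via identity coprojections; this immediately forces any two parallel maps out of $X$ to be homotopic (both restrict to themselves along the identities), and since a map $X\to 0$ exists, $X$ is homotopy initial. This avoids the detour through $0+0$, the appeal to \reflemm{propofhompbs}(i), and the hom-set argument. Your approach does have the minor virtue of not needing to recognise $0$ as a model of $0+0$, but the paper's choice of diagram makes the conclusion essentially a one-liner.
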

\begin{proof} Property (i) is a special case of stability, as applied to the following diagram:
\diag{ X \times A \ar[r] \ar[d] & X \times (A + B) \ar[d] & X \times B \ar[d] \ar[l] \\ A \ar[r] & A + B & B. \ar[l] }

To prove (ii), note that given any arrow $f: X \to 0$ the diagram
\diag{ X \ar[d]_f \ar[r]^1 & X \ar[d]^f & X \ar[d]^f \ar[l]_1 \\
0 \ar[r]_1 & 0 & 0 \ar[l]^1 }
consists of two (homotopy) pullbacks. So the top row is homotopy coproduct diagram by stability and therefore any two parallel arrows with domain $X$ are homotopic. This, in combination with the existence of a map $f: X \to 0$, implies that $X$ is a homotopy initial object and $f$ is a homotopy equivalence.

To prove (iii), suppose $f: Y \to X$ is a map in $\ct{C}(A + B)$ and let $f_A: Y_A \to X_A$ and $f_B: Y_B \to X_B$ be the pullbacks of $f$ along $A \to A + B$ and $B \to A + B$, respectively. If both $f_A$ and $f_B$ are homotopy equivalences, then so is $f_A + f_B: Y_A + Y_B \to X_A + X_B$ by \refprop{propofsumsinfibcategories}.(ii). But if the sums in \ct{C} are stable, then $Y_A + Y_B \simeq Y$ and $X_A + X_B \simeq X$, so $f$ is a homotopy equivalence, as desired.
\end{proof}

\begin{prop}{altdefhomext}
Suppose \ct{C} is a path category which has a homotopy initial object and homotopy sums. Then \ct{C} is homotopy extensive if and only if the following two conditions are satisfied:
\begin{enumerate}
\item[(i)]  If $C \to A$ and $D \to B$ are two maps, then
\diag{ C \ar[r] \ar[d] & C + D \ar[d] & D \ar[d] \ar[l] \\
A \ar[r] & A + B & B, \ar[l] }
consists of two homotopy pullbacks.
\item[(ii)] The functor $\ct{C}(A + B) \to \ct{C}(A) \times \ct{C}(B)$ is homotopy conservative.
\end{enumerate}
\end{prop}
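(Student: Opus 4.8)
The plan is to prove the two implications separately, matching condition (i) with disjointness together with the substantive half of stability, and condition (ii) with the conservativity that extensivity provides.

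\emph{From homotopy extensive to (i) and (ii).} Condition (ii) is precisely \refprop{consequencesofstabilityofsums}.(iii). For (i), fix maps $C\to A$ and $D\to B$, and let $P$ and $Q$ be the homotopy pullbacks of the induced map $C+D\to A+B$ along $i_A$ and along $i_B$ respectively. Applying stability of the homotopy sum $A+B$ to the diagram with top row $P\to C+D\leftarrow Q$ lying over $A\to A+B\leftarrow B$ shows that $P\to C+D\leftarrow Q$ is a homotopy coproduct, so the comparison map $P+Q\to C+D$ is a homotopy equivalence. The universal property of the homotopy pullback supplies canonical maps $u\colon C\to P$ and $v\colon D\to Q$ over $A$ and $B$ with $\mathrm{pr}_{C+D}\circ u\simeq i_C$ and $\mathrm{pr}_{C+D}\circ v\simeq i_D$; assembling these, the composite $C+D\xrightarrow{u+v}P+Q\to C+D$ is homotopic to the identity on each summand, hence homotopic to the identity, so $u+v$ is itself a homotopy equivalence. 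It remains to split this equivalence into its two components. I would pull the coprojection $P\to P+Q$ back along $u+v$: on the one hand, by \reflemm{propofhompbs}.(i) the result is homotopy equivalent to $P$; on the other hand, $u+v$ carries the summand $C$ into $P$ (via $u$) and the summand $D$ into $Q$ (via $v$), and since the homotopy pullback of $P\to P+Q$ along $Q\to P+Q$ is $0$ by disjointness while $0$ is strict (\refprop{consequencesofstabilityofsums}.(ii)), the distributive law (\refprop{consequencesofstabilityofsums}.(i)) identifies the same homotopy pullback with $C+0\simeq C$. Tracing the canonical maps, the resulting equivalence $C\simeq P$ is homotopic to $u$, so $u$ is a homotopy equivalence --- which is exactly the statement that the left square in (i) is a homotopy pullback --- and symmetrically for $v$ and the right square.

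\emph{From (i) and (ii) to homotopy extensive.} A homotopy initial object and homotopy sums are given, so only disjointness and stability need to be checked. Disjointness is the instance of (i) with $C=0$ carrying its unique map to $A$ and $D=B$ carrying the identity: using $0+B\simeq B$ (\refprop{propofsumsinfibcategories}.(i)) to identify $0+B\to A+B$ with $i_B$ and $0\to 0+B$ with the unique map $0\to B$, the left square of (i) becomes precisely the square expressing disjointness of $A+B$, which (i) asserts is a homotopy pullback. For stability, suppose both squares in a diagram with top row $C\to X\leftarrow D$ over $A\to A+B\leftarrow B$ are homotopy pullbacks; we must show the comparison $c\colon C+D\to X$ is a homotopy equivalence. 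Since both squares commute up to homotopy, $c$ commutes up to homotopy with the evident maps into $A+B$, so after replacing $X\to A+B$ and $C+D\to A+B$ by fibrations (using \refprop{fillersuptohomotopy} to strictify $c$) we may regard $c$ as a morphism in $\ct{C}(A+B)$. By condition (ii) it is enough to show that the pullbacks of $c$ along $i_A$ and along $i_B$ are homotopy equivalences. Along $i_A$: the pullback of $X\to A+B$ is homotopy equivalent to $C$ by the stability hypothesis, and the pullback of $C+D\to A+B$ is homotopy equivalent to $C$ by (i), with the coprojection $i_C$ realising the equivalence; a routine check with the universal property of the homotopy pullback, using $c\circ i_C\simeq(C\to X)$, shows that under these two identifications the pullback of $c$ is homotopic to $1_C$. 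The same argument works along $i_B$, so $c$ is a homotopy equivalence in $\ct{C}(A+B)$, hence in \ct{C}, and the top row is a homotopy coproduct.

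The step I expect to be the real obstacle is the splitting in the first direction: passing from ``$u+v$ is a homotopy equivalence'' to ``$u$ and $v$ are homotopy equivalences'' forces one to combine stability (in the form of distributivity over homotopy sums), disjointness, and strictness of $0$ while carefully tracking the canonical comparison maps; a milder version of this bookkeeping recurs in the second direction when one identifies the pullback of $c$ with $1_C$ rather than with an unidentified self-map of $C$.
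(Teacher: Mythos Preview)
Your proof is correct in outline, and the $\Leftarrow$ direction closely matches the paper's argument. For $\Rightarrow$(i) you take a genuinely different route: you first show the comparison $u+v\colon C+D\to P+Q$ is a homotopy equivalence (as a homotopy section of the stability equivalence $P+Q\to C+D$), and then ``split'' this into $u$ and $v$ being equivalences individually. The paper instead fixes the homotopy pullback $C'$ of $C+D\to A+B$ along $i_A$ and identifies it with $C$ via a $3\times 3$ grid of homotopy pullbacks, using stability to conclude $C\simeq C''+0$ directly.

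Your splitting step, which you rightly flag as the obstacle, has a slip and a gap. The slip: you cite the distributive law (\refprop{consequencesofstabilityofsums}.(i)) to decompose the homotopy pullback $Z$ of $i_P$ along $u+v$, but that law concerns only products $X\times(A+B)$; what you actually need is stability itself, applied to the (fibrant replacement of the) map $Z\to C+D$, to obtain $Z\simeq Z_C+Z_D$. The gap: having established $Z_D\simeq 0$ via disjointness and strictness, you still need $Z_C\to C$ to be a homotopy equivalence. The clean way to see this is to paste through $P'':=P\times^h_{P+Q}P$ and observe that $P''\simeq P$ via the diagonal --- but this fact is itself a special case of the statement (i) you are proving, and must be established separately by applying stability to $i_P\colon P\to P+Q$ together with disjointness. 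Once that is in hand, your ``tracing the canonical maps'' does go through: the map $C\to Z_C$ induced by $(u,i_C)$ is a section of the equivalence $Z_C\to C$, hence itself an equivalence, and its composite with $Z_C\simeq Z\to P$ recovers $u$.

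So your approach works, but the splitting reintroduces through the back door essentially the same grid-of-pullbacks computation that the paper carries out up front; the paper's organisation is more economical because it avoids the detour through $u+v$.
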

\begin{proof} $\Rightarrow$: In view of \refprop{consequencesofstabilityofsums}.(iii) it remains to show that (i) holds in all homotopy extensive path categories. To this  purpose consider a homotopy pullback of the form
\diag{ C' \ar[r] \ar[d] & C + D \ar[d] \\
A \ar[r] & A + B.}
We would like to show that $C' \simeq C$, and since we have already shown that the functor $\ct{C}(C + D) \to \ct{C}(C) \times \ct{C}(D)$ is homotopy conservative, it suffices to prove that the following two squares are homotopy pullbacks:
\begin{equation} \label{twosquares}
\begin{array}{cc}
\xymatrix{ C \ar[d] \ar[r] & C \ar[d] \\
C' \ar[r] & C + D} &
\xymatrix{ 0 \ar[d] \ar[r] & D \ar[d] \\
C' \ar[r] & C + D}
\end{array}
\end{equation}
By pasting of homotopy pullbacks, the second square is a homotopy pullback if and only if
\diag{ 0 \ar[r] \ar[d] & D \ar[d] \\
A \ar[r] & A + B  }
is. But the latter square can be decomposed as
\diag{ 0 \ar[r] \ar[d] & D \ar[d] \\
0 \ar[d] \ar[r] & B \ar[d] \\
A \ar[r] & A + B. }
Here the bottom square is a homotopy pullback by the disjointness of the homotopy sums and the top square is a homotopy pullback by \refprop{consequencesofstabilityofsums}.(ii). We conclude that the second square in (\ref{twosquares}) is a homotopy pullback.

In the same way one can show that
\diag{0 \ar[d] \ar[r] & C \ar[d] \\
D' \ar[r] & C + D}
is a homotopy pullback, where $D'$ is the homotopy pullback in
\diag{ D' \ar[r] \ar[d] & C + D \ar[d] \\
B \ar[r] & A + B.}
Now consider
\diag{ C'' \ar[d] \ar[r] & C \ar[d] & 0 \ar[l] \ar[d] \\
C' \ar[r] \ar[d] & C + D \ar[d] & D' \ar[l] \ar[d] \\
A \ar[r] & A + B & B \ar[l]}
in which all squares are homotopy pullbacks. By stability of sums we have that $C \simeq C'' + 0 \simeq C''$. This shows that also the first square in (\ref{twosquares}) is a homotopy pullback.

$\Leftarrow$: Suppose (i) and (ii) are satisfied. To show that the homotopy sums are stable, suppose that
\diag{ C \ar[r] \ar[d] & X \ar[d] & D \ar[d] \ar[l] \\
A \ar[r] & A + B & B, \ar[l] }
consists of two homotopy pullbacks. We have to show $C + D \simeq X$. Without loss of generality we may assume that both $X \to A + B$ and $C + D \to A + B$ are fibrations. Therefore it suffices to prove that $C + D$ and $X$ are homotopy equivalent after pulling back along $A \to A + B$ and $B \to A + B$. But for both $C + D$ and $X$ the results are homotopy equivalent to $C$ and $D$, respectively, so $C + D \simeq X$.

To see that homotopy sums are disjoint, note that (i) implies that
\diag{ 0 \ar[r] \ar[d] & 0 + B \ar[d]  \\
A \ar[r] & A + B }
is a homotopy pullback.
\end{proof}

\begin{prop}{propofhomextcats}
Let \ct{C} be a homotopy extensive path category. If the following squares
\begin{displaymath}
\begin{array}{cc}
\xymatrix{ X' \ar[r] \ar[d] & X \ar[d] \\ A' \ar[r] & A } &
\xymatrix{ Y' \ar[r] \ar[d] & Y \ar[d] \\ B' \ar[r] & B }
\end{array}
\end{displaymath}
are homotopy pullbacks in \ct{C}, then so is
\diag{ X' + Y' \ar[r] \ar[d] & X + Y \ar[d] \\ A'  + B' \ar[r] & A + B. }
\end{prop}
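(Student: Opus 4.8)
The plan is to show that the canonical comparison map $c \colon X' + Y' \to P$, where $P := (A' + B') \times^h_{A+B} (X + Y)$, is a homotopy equivalence; by \refdefi{hompbk} this is exactly the assertion that the square in the statement is a homotopy pullback. The map $c$ is induced in the evident way from the map $X' + Y' \to X + Y$ built out of $X' \to X$ and $Y' \to Y$, together with the map $X' + Y' \to A' + B'$ built out of $X' \to A'$ and $Y' \to B'$; these two form a cone over $X + Y \to A + B \leftarrow A' + B'$ commuting up to homotopy, since the two given squares do.

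First I would compute what $X' + Y'$ and $P$, regarded as objects over $A' + B'$, restrict to over the two summands. Applying \refprop{altdefhomext}(i) to the maps $X' \to A'$ and $Y' \to B'$ shows that $X' \to X' + Y'$ over $A' \to A' + B'$ is a homotopy pullback, so $A' \times^h_{A'+B'}(X' + Y') \simeq X'$, and symmetrically $B' \times^h_{A'+B'}(X' + Y') \simeq Y'$. For $P$ I would apply \refprop{altdefhomext}(i) to $X \to A$ and $Y \to B$ to see that $X \to X + Y$ over $A \to A + B$ is a homotopy pullback, and then paste (\reflemm{propofhompbs}(ii)) the given homotopy pullback of $X' \to X$ over $A' \to A$ on top of it, obtaining that $X' \to X + Y$ over $A' \to A + B$ is a homotopy pullback, i.e. $A' \times^h_{A+B}(X + Y) \simeq X'$. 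Pasting once more — the defining homotopy pullback square of $P$ over $A' + B' \to A + B$ beneath the homotopy pullback square $A' \times^h_{A'+B'} P \to P$ over $A' \to A' + B'$ — would give $A' \times^h_{A'+B'} P \simeq A' \times^h_{A+B}(X + Y) \simeq X'$, and likewise $B' \times^h_{A'+B'} P \simeq Y'$.

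Next I would deduce that $c$ is a homotopy equivalence. Restricting $c$ over $A' \to A' + B'$ produces a map between $A' \times^h_{A'+B'}(X' + Y')$ and $A' \times^h_{A'+B'} P$; both are homotopy pullbacks of the cospan $A' \to A + B \leftarrow X + Y$ by the previous paragraph, and the map respects the two projections, so it is a homotopy equivalence by uniqueness of homotopy pullbacks (\refrema{comphompbks}); the same holds over $B'$. To pass from this to $c$ itself I would invoke the homotopy conservativity of the functor $\ct{C}(A' + B') \to \ct{C}(A') \times \ct{C}(B')$ from \refprop{consequencesofstabilityofsums}(iii). Since that functor is defined on fibrations, I would first replace $P \to A' + B'$ by a fibration $\widetilde P \to A' + B'$ (via a weak equivalence followed by a fibration) and then replace the composite $X' + Y' \to \widetilde P$ by a fibration $\widetilde E \to \widetilde P$, so that $\widetilde E$ and $\widetilde P$ become objects of $\ct{C}(A' + B')$ and the induced map $\widetilde E \to \widetilde P$ becomes a morphism there which is a homotopy equivalence if and only if $c$ is, and whose restrictions over $A'$ and $B'$ are, up to homotopy equivalence, the ones computed above. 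Homotopy conservativity then gives that $\widetilde E \to \widetilde P$, hence $c$, is a homotopy equivalence.

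The step I expect to require the most care is the bookkeeping in the middle: one has to check that the various homotopy pullbacks are identified by maps respecting the relevant cones, so that the restriction of $c$ really is the comparison equivalence $X' \simeq X'$ (respectively $Y' \simeq Y'$) and not merely some map between abstractly equivalent objects, and that the fibrant replacements over $A' + B'$ do not disturb these restrictions. None of this is deep, but it is where the attention is needed.
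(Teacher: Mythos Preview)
Your argument is correct and shares the same core computation with the paper's proof: both identify the restrictions of $P$ over $A'$ and $B'$ as $X'$ and $Y'$ via the same pasting/cube argument (the paper phrases it as a commuting cube whose front, back and right faces are homotopy pullbacks, forcing the left face to be one).

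The only difference is in the final step. You compute the restrictions of \emph{both} $X'+Y'$ and $P$ over $A'$ and $B'$, match them up, and then invoke homotopy conservativity of $\ct{C}(A'+B') \to \ct{C}(A') \times \ct{C}(B')$ (\refprop{consequencesofstabilityofsums}(iii)) to conclude that $c$ is an equivalence. The paper instead applies stability of homotopy sums directly: once the two squares
\[
\xymatrix{ X' \ar[r] \ar[d] & P \ar[d] \\ A' \ar[r] & A'+B' }
\qquad
\xymatrix{ Y' \ar[r] \ar[d] & P \ar[d] \\ B' \ar[r] & A'+B' }
\]
are known to be homotopy pullbacks, the top row exhibits $P$ as a homotopy coproduct of $X'$ and $Y'$, and the induced map $X'+Y' \to P$ is automatically the comparison map for the original square. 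This buys the paper exactly the bookkeeping you flagged as delicate: there is no need for fibrant replacements or for checking that the restricted maps are the ``right'' equivalences. Your route works, but the paper's is shorter because it uses the defining property (stability) rather than its consequence (conservativity).
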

\begin{proof}
Let $P$ be such that
\diag{ P \ar[r] \ar[d] & X + Y \ar[d] \\ A'  + B' \ar[r] & A + B }
is a homotopy pullback. To show that $P$ is a homotopy sum of $X'$ and $Y'$ it suffices, by stability, to show that both
\begin{equation} \label{again2sqrs}
\begin{array}{cc}
\xymatrix{ X' \ar[r] \ar[d] & P \ar[d] \\
A' \ar[r] & A' + B' } &
\xymatrix{ Y' \ar[r] \ar[d] & P \ar[d] \\
B' \ar[r] & A' + B' }
\end{array}
\end{equation}
are homotopy pullbacks. To see this for the first square, note that we have a commuting cube
\diag{ & P \ar[rr] \ar'[d][dd] & & X + Y \ar[dd] \\
X' \ar[ur] \ar[rr] \ar[dd] & & X \ar[ur] \ar[dd] \\
& A' + B' \ar[rr] & & A + B. \\
A' \ar[ur] \ar[rr] & & A \ar[ur] }
Since the front, the back and the right face are homotopy pullbacks, the same holds for the left face. A similar cube shows that the second square in (\ref{again2sqrs}) is a homotopy pullback as well.
\end{proof}

\subsection{Homotopy exact completion.} If \ct{C} is a homotopy extensive path category, then ${\rm Hex}(\ct{C})$ will not only be exact: it will be a pretopos, that is, a category which both exact and extensive. This subsection will be devoted to a direct proof of this fact. (Alternatively, we could have appealed to \refprop{conntoordintheory} above and Section 3.4 in \cite{carbonivitale98}.)

\begin{prop}{hinitobjunderhex}
Homotopy initial objects become initial objects in the homotopy exact completion.
\end{prop}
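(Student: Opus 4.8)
The plan is to prove directly that $i(0) = (0, P0)$ is an initial object of ${\rm Hex}(\ct{C})$; that is, for an arbitrary object $(Y, \sigma\colon S \to Y \times Y)$ of ${\rm Hex}(\ct{C})$ I will show that the set ${\rm Hom}_{{\rm Hex}(\ct{C})}(i(0), (Y, S))$ is a singleton. Two observations are used throughout. First, by \refrema{onfillersinhex} it suffices, both for the tracking square and for the identification triangle of \refdefi{hex}, that the relevant diagrams commute up to homotopy. Second, since $r\colon 0 \to P0$ is a weak equivalence, hence a homotopy equivalence by \reftheo{weisheq}, the object $P0$ is again homotopy initial; consequently any two parallel maps out of $0$, and any two parallel maps out of $P0$, are homotopic.

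For existence, use homotopy initiality of $0$ to choose a map $f\colon 0 \to Y$, and let $e\colon Y \to S$ be a reflexivity map for the homotopy equivalence relation $S$, so $\sigma e = \Delta_Y$. I claim $\varphi := efs\colon P0 \to S$ is a tracking for $f$ up to homotopy: indeed $\sigma\varphi = \Delta_Y f s = (fs, fs)$, while $fs \simeq ft$ because both are maps out of the homotopy initial object $P0$, so $\sigma\varphi \simeq (fs, ft) = (f\times f)\circ(s,t)$. Hence $f$ underlies a morphism $i(0) \to (Y, S)$ in ${\rm Hex}(\ct{C})$.

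For uniqueness, suppose $f, g\colon 0 \to Y$ both represent morphisms $i(0) \to (Y, S)$. Since $0$ is homotopy initial we have $f \simeq g$, and therefore $(f, f) \simeq (f, g)\colon 0 \to Y \times Y$. The map $H := ef\colon 0 \to S$ then satisfies $\sigma H = \Delta_Y f = (f, f) \simeq (f, g)$, so by \refrema{onfillersinhex} the maps $f$ and $g$ are identified in ${\rm Hex}(\ct{C})$. Thus the hom-set is a singleton, and $i(0)$ is initial. I do not anticipate a genuine obstacle here; the only point requiring care is the systematic replacement of the strict commutativity demanded in \refdefi{hex} by commutativity up to homotopy via \refrema{onfillersinhex} (equivalently, \refprop{fillersuptohomotopy}), which is precisely what allows the single reflexivity map $e$ of $S$ to supply both the tracking in the existence part and the identifying homotopy in the uniqueness part.
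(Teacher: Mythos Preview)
Your proof is correct and follows essentially the same approach as the paper's. Both arguments exploit that every pair of parallel maps out of $0$ (or out of $P0$) is homotopic, and both invoke \refrema{onfillersinhex} to replace strict commutativity by commutativity up to homotopy. The only cosmetic difference is that the paper produces the map into the equivalence relation $R$ via homotopy initiality of $0$ (choosing some $g\colon 0\to R$ and then straightening and lifting), whereas you use the reflexivity map $e\colon Y\to S$ to write down the tracking $efs$ and the identifying homotopy $ef$ explicitly; this is arguably a touch more economical but not a genuinely different route.
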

\begin{proof}
Let $(X, R)$ be an arbitrary object in the homotopy exact completion with a fibration $\rho: R \to X \times X$. If $0$ is homotopy initial, there will be maps $f: 0 \to X$ and $g: 0 \to R$. Now $(f, f) \simeq \rho g$, so by \refprop{fillersuptohomotopy} there is also a map $g': 0 \to R$ such that $(f, f) = \rho g'$. Hence the square
\diag{ 0 \ar[r]^{g'} \ar[d]_r & R \ar[d]^{\rho} \\
P0 \ar[r]_(.4){(fs, ft)} & X \times X }
commutes. Since it has a weak equivalence on the left and a fibration on the right, there is a map $P0 \to R$ to track $f$, showing the existence of a morphism $(0, P0) \to (X, R)$ in the homotopy exact completion. To prove uniqueness, note that if there are two maps $f, f': 0 \to X$ then $(f, f') \simeq \rho g$, which shows that $f$ and $f'$ are identical as maps in the homotopy exact completion (see \refrema{onfillersinhex}).
\end{proof}

\begin{theo}{hsumsunderhex}
If \ct{C} is a homotopy extensive path category, then its homotopy exact completion ${\rm Hex}(\ct{C})$ is a pretopos.
\end{theo}
\begin{proof}
It will be convenient to use the first alternative description of ${\rm Hex}(\ct{C})$ in terms of pseudo-equivalence relations, as in \refprop{thirddescrofhex}. So let $R \to X \times X$ and $S \to Y \times Y$ be two pseudo-equivalence relations.

If $R + S$ and $X + Y$ are the homotopy sums, then from the maps $X \times X \to (X + Y) \times (X + Y)$ and $Y \times Y \to (X + Y) \times (X + Y)$ and the universal property of $R + S$ we obtain a map \[ R + S \to (X + Y) \times (X + Y). \]
Using the properties of homotopy extensive categories that we have established, one can show that this map is a pseudo-equivalence relation and indeed the sum of $R \to X \times X$ and $S \to Y \times Y$ in the homotopy exact completion. The (easy) verification that these sums are stable and disjoint is left to the reader.
\end{proof}

\begin{rema}{ongranvitale} \reftheo{hsumsunderhex} could also have been derived from results in \cite{granvitale98,lackvitale01}, but it turns out that it is not difficult to give a direct proof, so that is what we have done here.
\end{rema}

\subsection{Homotopy natural numbers object} A homotopy natural numbers object we define, like a homotopy sum, as a natural numbers object in the homotopy category.
\begin{defi}{hnno}
An object $\NN$ together with maps $0: 1 \to \NN$ and $\sigma: \NN \to \NN$ is a \emph{homotopy natural numbers object} (hnno) if for any pair of maps $y_0: 1 \to Y$ and $g: Y \to Y$ there is a map $h: \NN \to Y$, unique up to homotopy, such that $h 0 \simeq y_0$ and $h \sigma \simeq gh$.
\end{defi}

\begin{prop}{equivcharhnno}
An object $\NN$ together with maps $0: 1 \to \NN$ and $\sigma: \NN \to \NN$ is a homotopy natural numbers object if and only if for any commuting diagram of the form
\diag{ & X \ar[r]^f \ar[d]^p & X \ar[d]^p \\
1 \ar[r]_0 \ar[ur]^{x_0} & \NN \ar[r]_\sigma & \NN }
where $p$ is a fibration, there is a section $a: \NN \to X$ of $p$ such that $a 0 \simeq x_0$ and $a \sigma \simeq f a$.
\end{prop}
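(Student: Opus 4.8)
I would prove this by the same two-step pattern used for \refprop{othercharhinitialobj} and \refprop{othercharhomsum}: translate the ``up to homotopy'' universal property into the strict-section formulation by means of \refprop{fillersuptohomotopy}, and conversely recover the universal property by sitting a suitable fibration over $\NN$.

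For the direction ($\Rightarrow$), assume $\NN$ is a homotopy natural numbers object and suppose we are given a commuting diagram as in the statement, with $p\colon X \to \NN$ a fibration, $x_0\colon 1 \to X$ satisfying $p x_0 = 0$, and $f\colon X \to X$ satisfying $pf = \sigma p$. Applying the defining property of the hnno to the pair $(x_0, f)$ yields $h\colon \NN \to X$ with $h0 \simeq x_0$ and $h\sigma \simeq fh$. The key point is that $ph$ then satisfies $(ph)0 \simeq p x_0 = 0$ and $(ph)\sigma \simeq p(fh) = \sigma(ph)$, so $ph$ solves the same recursion as $1_\NN$; by the uniqueness clause of the hnno property, $ph \simeq 1_\NN$. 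Since $p$ is a fibration, \refprop{fillersuptohomotopy} applied to the triangle formed by $h$, $p$ and $1_\NN$ produces a map $a\colon \NN \to X$, homotopic to $h$, with $pa = 1_\NN$; then $a0 \simeq h0 \simeq x_0$ and $a\sigma \simeq h\sigma \simeq fh \simeq fa$, using that $\simeq$ is a congruence (\reftheo{homotopyiscongr}).

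For the direction ($\Leftarrow$), assume the section property and let $y_0\colon 1 \to Y$ and $g\colon Y \to Y$ be given. For existence I would put $X := Y \times \NN$, $p := \pi_2$ (a fibration, being a product projection), $f := (g\pi_1, \sigma\pi_2)$ and $x_0 := (y_0, 0)$; then $p x_0 = 0$ and $pf = \sigma p$, so the hypothesis supplies a section $a = (h, 1_\NN)$ of $p$ with $a0 \simeq x_0$ and $a\sigma \simeq fa$, and projecting to the first coordinate gives $h := \pi_1 a$ with $h0 \simeq y_0$ and $h\sigma \simeq gh$. For uniqueness, given $h,h'\colon \NN \to Y$ both solving the recursion, I would form the pullback of a path object $PY$ along $(h,h')\colon \NN \to Y\times Y$, obtaining a fibration $q\colon Q \to \NN$ together with the canonical $v\colon Q \to PY$ with $sv = hq$, $tv = h'q$. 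One then checks that $q$ admits a point $(0,u)$ over $0$ (because $h0 \simeq h'0$) and an endomorphism $f_Q := (\sigma q, w)\colon Q \to Q$ over $\sigma$ (because $h\sigma q \simeq ghq \simeq gh'q \simeq h'\sigma q$, the outer homotopies coming from the recursion equations and the middle one by transporting $v$ through a map $Pg\colon PY \to PY$ as in \refcoro{onpathobj}, all composed using \reftheo{homotopyiscongr}). The hypothesis then yields a section of $q$, and composing it with $v$ gives a homotopy $h \simeq h'$.

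The only place requiring any care — and hence the step I expect to be the main obstacle — is the construction of the endomorphism $f_Q$ of $Q$ in the uniqueness argument: one must produce an \emph{actual} map $w\colon Q \to PY$ witnessing $h\sigma q \simeq h'\sigma q$ by concatenating three homotopies, which is exactly where the congruence property of $\simeq$ and the functoriality of path objects (\refcoro{onpathobj}) are needed. Everything else is a routine transcription of the universal properties.
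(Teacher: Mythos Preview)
Your proposal is correct and follows essentially the same route as the paper's proof: both directions mirror the template of \refprop{othercharhinitialobj} and \refprop{othercharhomsum}, and for the uniqueness clause in the ($\Leftarrow$) direction the paper builds exactly the pullback of $PY$ along $(h,h')$ and the endomorphism over $\sigma$ by concatenating the three paths $Hq$, $(Pg)\pi$, $Kq$, just as you outline. The only minor quibble is that the composition of the three homotopies into a single map $Q \to PY$ uses the transitivity structure on $PY$ from \refprop{pathobjheq} rather than \reftheo{homotopyiscongr} per se, but the argument is the same.
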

\begin{proof}
The argument is very similar to proofs of both \refprop{othercharhinitialobj} and \refprop{othercharhomsum}, so we will not give many details here. Let us just point out how one proves that if $0: 1 \to \NN$ and $\sigma: \NN \to \NN$ are as in the statement of the proposition, then for any pair of maps $y_0: 1 \to Y$ and $g: Y \to Y$ and for any pair of maps $h, h': \NN \to Y$ such that $h 0 \simeq y_0$ and $H: h \sigma \simeq gh$ and $h' 0 \simeq y_0$ and $K: gh' \simeq h' \sigma $, one must have $h \simeq h'$. For this one constructs the pullback
\diag{ X \ar[d]_p \ar[r]^\pi & PY \ar[d]^{(s, t)} \\
\NN \ar[r]_(.4){(h,h')} & Y \times Y, }
and considers the maps $Hp, Pg \circ \pi, Kp: X \to PY$, where $Pg: PY \to PY$ is a map such that $(s, t)Pg = (g \times g)(s, t)$. Since
\[ tHp = ghp = gs\pi = s (Pg) \pi \]
and
\[ t (Pg) \pi = gt\pi = gh'p = sKp, \]
we can use the composition operation on $PY$ to construct a map $L: X \to PY$ with $sL = sHp = h\sigma p$ and $tL = tKp = h'\sigma p$. Together with the universal property of $X$ this gives one a map $f: X \to X$ with $pf = \sigma p$ and $\pi f = L$. Since $h0 \simeq y_0 \simeq h'0$, there is also a map $x_0: 1 \to X$ with $px_0 = 0$. From this follows that $p$ has a section and hence that $h$ and $h'$ are homotopic.
\end{proof}

\begin{prop}{hexandhnno}
If \ct{C} is a path category with a homotopy natural numbers object, then ${\rm Hex}(\ct{C})$ has a natural numbers object.
\end{prop}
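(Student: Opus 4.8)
The plan is to show that $i(\NN) = (\NN, P\NN)$, together with the images under the embedding $i \colon \ct{C} \to {\rm Hex}(\ct{C})$ of $0 \colon 1 \to \NN$ and $\sigma \colon \NN \to \NN$, is a natural numbers object (recall that $i(1) \cong (1, P1)$ is the terminal object of ${\rm Hex}(\ct{C})$). This does not follow formally, since $i$ fails to preserve finite limits, so a direct argument is needed. Fix an object $(Y, \tau \colon S \to Y \times Y)$ of ${\rm Hex}(\ct{C})$ together with a global element $y_0 \colon 1 \to (Y, S)$ and an endomorphism $g \colon (Y, S) \to (Y, S)$; the task is to produce $h \colon i(\NN) \to (Y, S)$ with $h \circ 0 = y_0$ and $h \circ \sigma = g \circ h$, and to show it is unique.

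For existence I would invoke projectivity. By \refprop{iiscovering} the canonical map $c \colon i(Y) = (Y, PY) \to (Y, S)$ (represented by $1_Y$ and tracked via \reflemm{PXleastheqrel}) is a cover, and $i(Y)$ and $1 = i(1)$ are projective. Hence the morphisms $y_0$ and $g \circ c \colon i(Y) \to (Y, S)$ lift along $c$ to morphisms $\bar{y}_0 \colon 1 \to i(Y)$ and $\bar{g} \colon i(Y) \to i(Y)$ with $c\bar{y}_0 = y_0$ and $c\bar{g} = gc$; since $i$ is full, $\bar{y}_0 = i(y_0')$ and $\bar{g} = i(g')$ for maps $y_0' \colon 1 \to Y$ and $g' \colon Y \to Y$ of \ct{C}, determined up to homotopy. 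Applying the universal property of the hnno $\NN$ to $y_0'$ and $g'$ yields $h' \colon \NN \to Y$ with $h'0 \simeq y_0'$ and $h'\sigma \simeq g' h'$, and I set $h := c \circ i(h')$. Using that $i$ identifies homotopic maps, together with $c\bar{g} = gc$, one computes $h \circ 0 = c\, i(h'0) = c\, i(y_0') = c\bar{y}_0 = y_0$ and $h \circ \sigma = c\, i(h'\sigma) = c\, i(g')\, i(h') = c\bar{g}\, i(h') = gc\, i(h') = g \circ h$.

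For uniqueness, suppose $h_1, h_2 \colon i(\NN) \to (Y, S)$ both satisfy the two equations. By the description of morphisms in ${\rm Hex}(\ct{C})$, any map $\NN \to Y$ of \ct{C} represents a morphism $i(\NN) \to (Y, S)$ (a tracking being supplied by \reflemm{PXleastheqrel}), so each $h_j$ is represented by some $f_j \colon \NN \to Y$, and it suffices to produce $K \colon \NN \to S$ with $\tau K = (f_1, f_2)$, as such a $K$ witnesses $h_1 = h_2$. Here I would use the characterisation of the hnno in \refprop{equivcharhnno}. Let $q \colon Q \to \NN$ be the pullback of the fibration $\tau$ along $(f_1, f_2) \colon \NN \to Y \times Y$; then $q$ is a fibration and a section of $q$ is exactly a $K$ as wanted. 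The hypotheses $h_1 0 = y_0 = h_2 0$ force $f_1 0$ and $f_2 0$ to be $S$-related (via the symmetry and transitivity maps of $S$), which is a point of $Q$ lying over $0 \colon 1 \to \NN$. The hypotheses $h_j \sigma = g h_j$ give, after choosing a representative $g' \colon Y \to Y$ of $g$ with tracking $\varphi \colon S \to S$ and maps $G_j \colon \NN \to S$ with $\tau G_j = (f_j \sigma, g' f_j)$, an endomorphism of $Q$ over $\sigma$: heuristically, over $(n, w) \in Q$ (where $w \in S$ runs from $f_1(n)$ to $f_2(n)$) it returns $(\sigma(n), w')$ with $w' \in S$ the composite, formed using the symmetry and transitivity maps of $S$, of $G_1(n) \colon f_1(\sigma(n)) \to g' f_1(n)$, then $\varphi(w) \colon g' f_1(n) \to g' f_2(n)$, then the reverse of $G_2(n) \colon f_2(\sigma(n)) \to g' f_2(n)$. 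Feeding this data into \refprop{equivcharhnno} produces a section of $q$, hence $K$, hence $h_1 = h_2$.

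The main obstacle is the construction of the endomorphism of $Q$ over $\sigma$ in the uniqueness part: it must be built diagrammatically out of the symmetry and transitivity structure maps of the homotopy equivalence relation $S$, the tracking $\varphi$, and the maps $G_j$, and one has to check that the composability conditions governing the transitivity map are met on the nose. This is a formalisation of the informal induction ``$f_1$ and $f_2$ are pointwise $S$-related at $\sigma(n)$ because they are at $n$, transported through $g'$ by $\varphi$ and corrected by $G_1, G_2$''. Everything else — that $0$ and $\sigma$ define morphisms on $i(\NN)$, that $h$ is a genuine morphism of ${\rm Hex}(\ct{C})$, and the verification that the section delivered by \refprop{equivcharhnno} is the required $K$ — is routine.
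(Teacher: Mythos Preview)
Your proof is correct. The existence argument via projectivity works (and could in fact be simplified: any representatives $y_0'\colon 1\to Y$ of $y_0$ and $g'\colon Y\to Y$ of $g$ already suffice, since every map $\NN\to Y$ automatically defines a morphism $i(\NN)\to(Y,S)$ and homotopic maps into $Y$ become equal in ${\rm Hex}(\ct{C})$; the detour through lifting along $c$ is not needed). Your uniqueness argument via \refprop{equivcharhnno} applied in \ct{C} to the pullback $q\colon Q\to\NN$ of $\tau$ along $(f_1,f_2)$ is sound, and the endomorphism of $Q$ over $\sigma$ that you describe can indeed be built strictly: the endpoint conditions required to feed $(G_1 q,\varphi\pi)$ and then the symmetry of $G_2 q$ into the transitivity map of $S$ match on the nose, so this is routine diagram chasing rather than a real obstacle.

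The paper takes a different, more conceptual route. Rather than verifying the natural numbers object axioms directly in ${\rm Hex}(\ct{C})$, it observes that $i\NN=(\NN,P\NN)$ inherits the fibration--section characterisation of \refprop{equivcharhnno} inside the path category ${\rm Ex}(\ct{C})$; hence $i\NN$ is a homotopy natural numbers object there, and therefore becomes an ordinary natural numbers object in the homotopy category ${\rm Ho}({\rm Ex}(\ct{C}))\simeq{\rm Hex}(\ct{C})$. This packages both existence and uniqueness into a single transfer step and exploits the machinery set up in \reftheo{hexcatoffibrantobj}. Your argument, by contrast, stays entirely within \ct{C} and ${\rm Hex}(\ct{C})$ and never mentions ${\rm Ex}(\ct{C})$; it is more explicit but longer, and requires separate handling of existence and uniqueness.
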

\begin{proof}
Suppose that $\NN$ is an object in \ct{C} which comes equipped with maps $0: 1 \to \NN$ and $\sigma: \NN \to \NN$ having the property as in the previous proposition. It is not hard to see that $i\NN = (\NN, P\NN)$ must have the same property in ${\rm Ex}(\ct{C})$, and therefore it becomes a natural numbers object in its homotopy category ${\rm Hex}(\ct{C})$.
\end{proof}

\section{$\Pi$-types}

In this section, we study a suitable notion of function space in path categories and the structure these function spaces induce on the homotopy exact completion.
We are guided by the relevant properties of the classical exact completion of categories with finite limits, where the existence of a weak kind of internal hom-object in every slice of the original category implies that every slice of the exact completion has actual internal homs, i.e., is a locally cartesian closed category \cite{carbonirosolini00}.  These weak internal hom-objects enjoy the existence condition for the internal hom in the sense that any map $A \times B \to C$ gives a map $A \to {\rm Hom}(B,C)$, but the latter is not required to be unique.  A similar situation arises in type theory, and the path categories constructed as syntactic categories of dependent type theories only possess such weak internal homs. It is important to realise that for these type-theoretic categories there is \emph{a priori} no uniqueness condition involved at all, not even in a up-to-homotopy sense. (Uniqueness up to homotopy is related to an additional property of type theory called function extensionality, see \refrema{functionextensionality}  below.)

More generally, dependent type theories usually include a type constructor for $\Pi$-types. For a path category \ct{C} arising as the syntactic category of such
a type theory, the pullback functors $\ct{C}(B) \to \ct{C}(A)$ along fibrations $B \to A$ have a weak kind of right adjoint (weakness here is meant in the same sense as for internal homs above).

In this section, we will define notions of weak homotopy exponential and weak homotopy $\Pi$-type in the context of an arbitrary path category \ct{C}. These notions are chosen in such a way that for the special case where \ct{C} is a category with finite limits and every map in \ct{C} is a fibration and every weak equivalence in \ct{C} is an isomorphism, having weak homotopy $\Pi$-types corresponds to the notion of weak local cartesian closure from \cite{carbonirosolini00}. In addition, these notions are sufficiently weak to ensure that these structures exist in the syntactic path category obtained from a type theory possessing the corresponding type constructions, even if in the type theory the computation rules would hold only in a propositional form. Finally, the notion of weak homotopy $\Pi$-type is sufficiently strong to ensure that the homotopy exact completion ${\rm Hex}(\ct{C})$ is locally cartesian closed if \ct{C} has weak homotopy $\Pi$-types.

\subsection{Definition and properties.}
Throughout this section \ct{C} will be a path category.
\begin{defi}{hexponential}
For objects $X$ and $Y$ in \ct{C} a \emph{weak homotopy exponential} is an object $X^Y$ together with a map ${\rm ev}: X^Y \times Y \to X$ such that for any map $h: A \times Y \to X$ there is a map $H: A \to X^Y$ such that
\diag{ X^Y \times Y \ar[r]^(.6){\rm{ev}} & X \\
A \times Y \ar[ur]_h \ar[u]^{H \times 1} }
commutes up to homotopy. If such a map $H$ is unique up to homotopy, then $X^Y$ is a \emph{homotopy exponential}.
\end{defi}

\begin{defi}{hPitypes}
The category \ct{C} has \emph{weak homotopy $\Pi$-types} if for any two fibrations $f: X \to J$ and $\alpha: J \to I$ there is a an object $\Pi_\alpha X = \Pi_\alpha f$ in $\ct{C}(I)$, that is, a fibration $\Pi_\alpha X \to I$, together with an evaluation map ${\rm ev}: \alpha^* \Pi_\alpha X \to X$ over $J$, with the following weak universal property: if there are maps $g: Y \to I$ and $m: \alpha^* Y \to X$ with $m$ over $J$, then there exists a map $n: Y \to \Pi_\alpha X$ over $I$ such that $m: \alpha^* Y \to X$ and ${\rm ev} \circ \alpha^* n: \alpha^* Y \to X$ are fibrewise homotopic over $J$. If the map $n$ is unique with this property up to fibrewise homotopy over $I$, we call $\Pi_\alpha f$ and ${\rm ev}: \alpha^* \Pi_\alpha X \to X$ a \emph{homotopy $\Pi$-type}.
\end{defi}

\begin{rema}{altdefhPitypes} We will not need this observation, but we would like to point out that in the definition above it is sufficient to consider only fibrations $g: Y \to I$.
\end{rema}

In the proofs of the following two propositions we only give the constructions: verifications are left to the reader.

\begin{prop}{hPiimpliesslicesexp}
If \ct{C} has (weak) homotopy $\Pi$-types then each $\ct{C}(I)$ has (weak) homotopy exponentials.
\end{prop}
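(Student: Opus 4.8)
The plan is to use the standard locally-cartesian-closed recipe $E^B = \Pi_q(q^*E)$. Fix two objects of $\ct{C}(I)$, i.e.\ two fibrations $p : E \to I$ and $q : B \to I$, and write $q^*E = B\times_I E \to B$ for the pullback fibration. Since $q$ and $q^*E\to B$ are both fibrations, a (weak) homotopy $\Pi$-type supplies a fibration $E^B := \Pi_q(q^*E) \to I$, an object of $\ct{C}(I)$, together with an evaluation map ${\rm ev} : q^*E^B \to q^*E$ over $B$. Under the canonical symmetry isomorphism $q^*E^B = B\times_I E^B \cong E^B\times_I B$ in $\ct{C}(I)$ and after composing with the projection $\mathrm{pr}_E : B\times_I E \to E$, this yields a map ${\rm ev}' : E^B\times_I B \to E$, which one checks lies over $I$ precisely because ${\rm ev}$ lies over $B$. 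This ${\rm ev}'$ will be the evaluation for the weak homotopy exponential in the sense of \refdefi{hexponential} applied to the path category $\ct{C}(I)$, where products are fibre products over $I$ and homotopy is fibrewise homotopy $\simeq_I$ over $I$.

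Next I would produce transposes. Given $h : A\times_I B \to E$ over $I$, set $m_h := \langle \mathrm{pr}_B, h\circ\mathrm{tw}\rangle : q^*A = B\times_I A \to B\times_I E = q^*E$, where $\mathrm{tw} : B\times_I A \xrightarrow{\ \sim\ } A\times_I B$ is the symmetry; one checks that $m_h$ is well defined and lies over $B$. Feeding $g = (A\to I)$ and $m_h$ into the weak universal property of $\Pi_q$ yields a map $H : A \to E^B$ over $I$ such that $m_h$ and ${\rm ev}\circ q^*H$ are fibrewise homotopic over $B$. The post-composition functor $\Sigma_q : \ct{C}(B) \to \ct{C}(I)$ sends weak equivalences to weak equivalences, hence identifies homotopic maps by (the proof of) \refcoro{univpropofho}; so $m_h$ and ${\rm ev}\circ q^*H$ become homotopic in $\ct{C}(I)$, and post-composing with $\mathrm{pr}_E : B\times_I E \to E$ (a morphism of $\ct{C}(I)$) and untwisting gives $h \simeq_I {\rm ev}'\circ(H\times_I 1_B)$. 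Since $A$ was an arbitrary object of $\ct{C}(I)$, this proves the ``weak'' half of the proposition.

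For the non-weak half, suppose $\ct{C}$ has genuine homotopy $\Pi$-types, and let $H, H' : A \to E^B$ both transpose $h$, so ${\rm ev}'\circ(H\times_I 1_B) \simeq_I h \simeq_I {\rm ev}'\circ(H'\times_I 1_B)$. To invoke the uniqueness clause of \refdefi{hPitypes} I must upgrade these to $m_h \simeq_B {\rm ev}\circ q^*H$ and $m_h \simeq_B {\rm ev}\circ q^*H'$; that is, I must reflect a fibrewise homotopy over $I$ between $E$-valued maps back to a fibrewise homotopy over $B$ between $q^*E$-valued maps. Here one uses \refprop{presbypb}: pulling back a path object $P_I E$ of $E$ in $\ct{C}(I)$ along $q$ gives a path object for $q^*E$ in $\ct{C}(B)$, and since by the remark following \refcoro{onpathobj} the homotopy relation does not depend on the choice of path object, a witness $A\times_I B \to P_I E$ for $h \simeq_I {\rm ev}'\circ(H\times_I 1_B)$ can be combined with the structure map $q^*A\to B$ into a witness $q^*A \to B\times_I P_I E$ for $m_h \simeq_B {\rm ev}\circ q^*H$ (using that $q^*A\to B$ is a fibration). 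The uniqueness clause of $\Pi_q$ then gives $H \simeq_I H'$.

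I expect the main obstacle to be purely organisational: keeping track of the symmetry isomorphisms $B\times_I(-)\cong (-)\times_I B$ and, above all, the two-way translation between $\simeq_B$ and $\simeq_I$. The ``preservation'' direction needed for existence is immediate from \refcoro{univpropofho}, but the ``reflection'' direction needed for uniqueness genuinely requires \refprop{presbypb} together with path-object independence of the homotopy relation. There is no conceptual difficulty beyond this bookkeeping, which is presumably why the authors leave the verifications to the reader.
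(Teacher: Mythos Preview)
Your construction $E^B = \Pi_q(q^*E)$ is exactly the paper's $Z^Y = \Pi_\alpha(\pi_2)$ (with $Y \leftrightarrow B$, $Z \leftrightarrow E$, $\alpha \leftrightarrow q$), and the paper says nothing beyond this, leaving all verifications to the reader. Your detailed arguments for existence (passing from $\simeq_B$ to $\simeq_I$ via the weak-equivalence-preserving $\Sigma_q$ and \refcoro{univpropofho}) and uniqueness (reflecting $\simeq_I$ back to $\simeq_B$ via the pulled-back path object from \refprop{presbypb}) are correct; the parenthetical ``using that $q^*A \to B$ is a fibration'' is not actually needed for the reflection step, but it does no harm.
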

\begin{proof}
Given $Y, Z \in \ct{C}(I)$ one defines $Z^Y$ in $\ct{C}(I)$ as $\Pi_\alpha (\pi_2)$, where $\alpha: Y \to I$ and $\pi_2: Z \times_I Y \to Y$.
\end{proof}

\begin{prop}{expnniceifPi}
Let \ct{C} be a path category with (weak) homotopy $\Pi$-types. Given a fibration $p: Z \to Y$ and a (weak) homotopy exponential $(Y^X, {\rm ev})$, there is a (weak) homotopy exponential $(Z^X, {\rm ev})$ and a fibration $p^X: Z^X \to Y^X$ such that
\begin{enumerate}
\item[(i)] The diagram
\diag{ Z^X \times X \ar[r]^(.6){\rm ev} \ar[d]_{p^X \times X} & Z \ar[d]^p \\
Y^X \times X \ar[r]_(.6){\rm ev} & Y }
commutes.
\item[(ii)] For each $T$ the diagram
\diag{ {\rm Ho}(\ct{C})(T, Z^X) \ar[d] \ar[r] & {\rm Ho}(\ct{C})(T \times X, Z) \ar[d] \\
{\rm Ho}(\ct{C})(T, Y^X) \ar[r] & {\rm Ho}(\ct{C})(T \times X, Y)}
in $\Sets$ has the property that the map from $ {\rm Ho}(\ct{C})(T, Z^X)$ to the inscribed pullback is an isomorphism in case $Z^X$ is a homotopy exponential, and an epimorphism in case $Z^X$ is a weak homotopy exponential.
\end{enumerate}
\end{prop}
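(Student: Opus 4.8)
The plan is to construct $Z^X$ as a $\Pi$-type over $Y^X$. Write $\alpha\colon Y^X\times X\to Y^X$ for the first projection; since $X\to 1$ is a fibration by axiom~(7), $\alpha$ is a fibration. Pull back $p\colon Z\to Y$ along ${\rm ev}\colon Y^X\times X\to Y$ to obtain a fibration $q\colon E\to Y^X\times X$ with a projection $\pi_Z\colon E\to Z$, and set $Z^X:=\Pi_\alpha E$, with $p^X\colon Z^X\to Y^X$ its structure fibration and ${\rm ev}_\Pi\colon\alpha^*(\Pi_\alpha E)\to E$ the $\Pi$-type evaluation over $Y^X\times X$. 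Since $\alpha^*(\Pi_\alpha E)\cong Z^X\times X$ (the pullback of the projection $\alpha$ along $p^X$), composing ${\rm ev}_\Pi$ with $\pi_Z$ gives the required map ${\rm ev}\colon Z^X\times X\to Z$. Property~(i) is then immediate from the defining pullback of $E$: one has $p\circ{\rm ev}=p\circ\pi_Z\circ{\rm ev}_\Pi={\rm ev}\circ q\circ{\rm ev}_\Pi={\rm ev}\circ(p^X\times X)$, using that ${\rm ev}_\Pi$ lies over $Y^X\times X$. In keeping with the convention stated just before the proposition, only this construction really needs to be given; I sketch the remaining verifications.

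For the (weak) universal property, suppose given $\phi\colon T\to Y^X$ and $\psi\colon T\times X\to Z$ with $p\psi\simeq{\rm ev}\circ(\phi\times1)$. Since $p$ is a fibration, \refprop{fillersuptohomotopy} replaces $\psi$ by a homotopic $\psi'$ with $p\psi'={\rm ev}\circ(\phi\times1)$; then $(\phi\times1,\psi')$ defines a map $m\colon\alpha^*T\cong T\times X\to E$ over $Y^X\times X$. Applying the weak universal property of the $\Pi$-type to $(\phi,m)$ yields $H\colon T\to Z^X$ over $Y^X$, so $p^XH=\phi$, with ${\rm ev}_\Pi\circ\alpha^*H\simeq_{Y^X\times X}m$; composing with $\pi_Z$ gives ${\rm ev}\circ(H\times1)\simeq\pi_Z m=\psi'\simeq\psi$. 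Taking $\psi=h$ and $\phi$ a (weak) adjunct of $ph$ through $Y^X$ shows $(Z^X,{\rm ev})$ is a weak homotopy exponential; and, applied to an arbitrary compatible pair $(\phi,\psi)$ representing an element of the inscribed pullback in~(ii), it shows the comparison map is surjective (hence an epimorphism in the weak case).

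It remains to treat uniqueness, assuming that $Y^X$ is a homotopy exponential and the $\Pi$-type is genuine, i.e.\ the isomorphism claim in~(ii). Given $H,H'\colon T\to Z^X$ with the same image in the inscribed pullback, property~(i) identifies $p^XH$ and $p^XH'$ as adjuncts through $Y^X$ of the homotopic maps $p\circ{\rm ev}(H\times1)$ and $p\circ{\rm ev}(H'\times1)$, so uniqueness for $Y^X$ gives $p^XH\simeq p^XH'$, and \refprop{fillersuptohomotopy} applied to the fibration $p^X$ lets us assume $p^XH=p^XH'=:g$. Then ${\rm ev}_\Pi\circ\alpha^*H$ and ${\rm ev}_\Pi\circ\alpha^*H'$ are, on the nose, the maps $(g\times1,{\rm ev}(H\times1))$ and $(g\times1,{\rm ev}(H'\times1))$ into $E$ over $Y^X\times X$, so by genuine uniqueness of the $\Pi$-type it suffices to show these two maps are fibrewise homotopic over $Y^X\times X$; that yields $H\simeq_{Y^X}H'$, in particular $H\simeq H'$. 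By \refprop{presbypb} the path object of $E$ in $\ct{C}(Y^X\times X)$ is the pullback along ${\rm ev}$ of the path object of $Z$ in $\ct{C}(Y)$ (exactly as in the proof of \reflemm{somefactsaboutgoodwes}(ii)), so the task reduces to upgrading the plain homotopy ${\rm ev}(H\times1)\simeq{\rm ev}(H'\times1)$ — whose two sides already agree over $Y$, by~(i) — to a fibrewise homotopy over $Y$.

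This upgrade is the one genuinely delicate point, and it is where genuineness has to be used: in general a plain homotopy between two maps into $Z$ lying over the same map to $Y$ need not be fibrewise (cf.\ \refexam{idoesnotpreservepullbacks}). The plan here is to exploit the transport structure on $p$ (\reftheo{fibrwithtransport}) and its associated connection (\reftheo{existenceconnections}) to establish an up-to-fibrewise-homotopy uniqueness of path lifting — transporting ${\rm ev}(H\times1)$ along the loop obtained by pushing the given homotopy down to $Y$ is fibrewise homotopic over $Y$ to ${\rm ev}(H'\times1)$ — and then to annihilate that loop using the genuine universal properties of $Y^X$ and of the $\Pi$-type. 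The rest is a routine, if somewhat lengthy, exercise in chasing fibrewise homotopies through the pullbacks above, which is the main reason the authors relegate it to the reader.
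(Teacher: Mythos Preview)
Your construction is exactly the one in the paper: pull back $p$ along ${\rm ev}$ to a fibration $q$ over $Y^X\times X$, then set $Z^X:=\Pi_{\pi_1}(q)$. The paper's proof consists of precisely this construction and nothing more (the authors explicitly announce before the proposition that verifications are left to the reader), so your proposal is correct and follows the same approach; your additional sketches of the weak universal property and of the uniqueness step go beyond what the paper supplies.
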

\begin{proof}
Given $Y^X$ with its evaluation ${\rm ev}: Y^X \times X \to Y$ let $q$ be the pullback
\diag{ P \ar[r] \ar[d]_q & Z \ar[d]^p \\
Y^X \times X \ar[r]_(.6){\rm ev} & Y,}
and let $Z^X$ be $\Pi_{\pi_1}(q)$, where $\pi_1: Y^X \times X \to Y^X$.
\end{proof}

\begin{coro}{Piandsections}
Suppose $p: Z \to Y$ is a fibration and $p^X: Z^X \to Y^X$ is the fibration obtained from it as in the previous proposition. Then any section $s: Y \to Z$ induces a section $s^X$ of $p^X$ such that
\diag{ Y^X \times X \ar[r]^(.6){\rm ev} \ar[d]_{s^X \times 1_X} & Y \ar[d]^s \\
Z^X \times X \ar[r]_(.6){\rm ev} & Z }
commutes up to homotopy.
\end{coro}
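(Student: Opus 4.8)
\emph{Proof proposal.} The plan is to obtain $s^X$ directly from the weak universal property of the $\Pi$-type that defines $Z^X$ in the proof of \refprop{expnniceifPi}. Recall from that proof that $Z^X = \Pi_{\pi_1}(q)$, where $q\colon P \to Y^X\times X$ is the pullback of $p\colon Z\to Y$ along ${\rm ev}\colon Y^X\times X\to Y$, and that the evaluation ${\rm ev}\colon Z^X\times X\to Z$ is, under the canonical identification $\pi_1^* Z^X \cong Z^X\times X$, the composite of the $\Pi$-type evaluation ${\rm ev}_\Pi\colon \pi_1^* Z^X\to P$ with the projection ${\rm pr}_Z\colon P\to Z$. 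I would first spell these two facts out precisely, since they are only implicit in \refprop{expnniceifPi}.

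First I would use $s$ to produce a section of $q$. Since $ps = 1_Y$, we have $p\circ(s\,{\rm ev}) = {\rm ev}$, so $\tilde{s} := (1_{Y^X\times X},\, s\,{\rm ev})\colon Y^X\times X\to P$ is a well-defined map into the pullback $P$; by construction $q\tilde{s} = 1_{Y^X\times X}$ (so $\tilde s$ is a morphism over $Y^X\times X$) and ${\rm pr}_Z\tilde{s} = s\,{\rm ev}$. Next I would feed $\tilde s$ into the weak universal property of $\Pi_{\pi_1}(q)$ from \refdefi{hPitypes}, taking the object ``$Y$'' there to be $1_{Y^X}$ (a fibration, so the variant of \refrema{altdefhPitypes} is respected) and the map ``$m$'' to be $\tilde s$; this is legitimate because $\pi_1^*(1_{Y^X})\cong 1_{Y^X\times X}$ and $\tilde s$ is over $Y^X\times X$. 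The output is a map $s^X\colon Y^X\to Z^X$ over $Y^X$ — i.e.\ a section of $p^X$ — together with a fibrewise homotopy over $Y^X\times X$ between $\tilde s$ and ${\rm ev}_\Pi\circ\pi_1^* s^X$.

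To finish, I would identify $\pi_1^* s^X$ with $s^X\times 1_X$, compose the fibrewise homotopy with ${\rm pr}_Z\colon P\to Z$ (so that a fibrewise homotopy over $Y^X\times X$ degrades to an ordinary homotopy in \ct{C}, using the lower filler of \reflemm{lifting}, and then postcompose, which preserves homotopy by \reftheo{homotopyiscongr}), obtaining
\[ {\rm ev}\circ(s^X\times 1_X) \;=\; {\rm pr}_Z\circ{\rm ev}_\Pi\circ\pi_1^* s^X \;\simeq\; {\rm pr}_Z\circ\tilde{s} \;=\; s\circ{\rm ev}, \]
which is exactly the asserted commutativity up to homotopy of the square in the statement.

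The argument has no real obstacle; the only delicate points are bookkeeping ones. I would need to match the pair $(1_{Y^X},\tilde s)$ against the precise shape of the weak universal property in \refdefi{hPitypes}, and to check carefully that the identification $\pi_1^* Z^X\cong Z^X\times X$ carries $\pi_1^* s^X$ to $s^X\times 1_X$ and carries ${\rm ev}_\Pi$ to the map used to build the evaluation of $Z^X$ in \refprop{expnniceifPi}. Once those identifications are pinned down the reasoning is purely formal — in essence, $s\mapsto s^X$ is obtained by applying the $\Pi$-type construction to the (trivial) statement that $s$ is a section, transported along the defining pullback $q$.
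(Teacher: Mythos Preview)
Your argument is correct and close in spirit to the paper's, but the execution differs in a useful way. The paper applies part~(ii) of \refprop{expnniceifPi} with $T = Y^X$: the pair $(1_{Y^X}, s\circ{\rm ev})$ lies in the inscribed pullback, so the surjection onto it produces a map $\sigma\colon Y^X\to Z^X$ with $p^X\sigma \simeq 1$ and ${\rm ev}\circ(\sigma\times 1_X)\simeq s\circ{\rm ev}$; a final appeal to \refprop{fillersuptohomotopy} then replaces $\sigma$ by a homotopic strict section $s^X$. You instead unpack the construction of $Z^X=\Pi_{\pi_1}(q)$ and invoke the weak universal property of the $\Pi$-type directly on the section $\tilde s$ of $q$. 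Because that universal property already returns a map \emph{over} $Y^X$, you obtain $p^X s^X = 1$ on the nose and can skip the strictification step. The trade-off is that the paper's route is more modular (it reuses the black-box property~(ii) rather than reopening the construction), while yours is slightly shorter and yields the strict section immediately. The bookkeeping you flag---identifying $\pi_1^* s^X$ with $s^X\times 1_X$ and passing from the fibrewise homotopy to an ordinary one before postcomposing with ${\rm pr}_Z$---is routine and poses no difficulty.
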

\begin{proof}
Consider the diagram in (ii) in the previous proposition with $T = Y^X$. Using that the map to the inscribed pullback is an epimorphism, one finds a map $\sigma: Y^X \to Z^X$ that upon postcomposition with $p^X$ is homotopic to the identity and such that
 \[ {\rm ev} \circ (\sigma \times 1_X) \simeq s \circ {\rm ev}. \]
Using that $p^X$ is a fibration, one may replace $\sigma$ by a homotopic map $s^X$ such that $p^Xs^X = 1$ and ${\rm ev} \circ (s^X \times 1_X) \simeq s \circ {\rm ev}$.
\end{proof}

\begin{prop}{onfunctionextensionality}
Suppose \ct{C} is a path category with weak homotopy $\Pi$-types, and let $X^Y$ be a weak homotopy exponential in \ct{C}. We may choose $X^Y \times X^Y$ as a suitable weak homotopy exponential $(X \times X)^Y$ and choose $(PX)^Y$ as in \refprop{expnniceifPi}, so that $(s^Y, t^Y): (PX)^Y \to X^Y \times X^Y$ is a fibration. Then the following are equivalent:
\begin{enumerate}
\item $X^Y$ is a homotopy exponential.
\item There is a morphism $e: (PX)^Y \to P(X^Y)$ such that $(s, t)e = (s^Y, t^Y)$.
\end{enumerate}
Also, if both $X^Y$ and $(PX)^Y$ are homotopy exponentials, then the canonical map $P(X^Y) \to (PX)^Y$ is a homotopy equivalence.
\end{prop}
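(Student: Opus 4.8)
\emph{Plan.} The whole argument is organised around the source and target maps $s,t\colon PX\to X$ and their transposes $s^Y,t^Y\colon (PX)^Y\to X^Y$. The preliminary observation is that $s\simeq t$ as maps $PX\to X$: both are retractions of $r$ and $sr=tr=1_X$, so they become equal in ${\rm Ho}(\ct C)$ since $r$ becomes an isomorphism there (cf.\ \refcoro{univpropofho}). On the other hand part~(i) of \refprop{expnniceifPi}, applied to the fibration $(s,t)\colon PX\to X\times X$ with the chosen weak homotopy exponential $X^Y\times X^Y=(X\times X)^Y$, gives ${\rm ev}_{X^Y}\circ(s^Y\times 1)=s\circ{\rm ev}$ and ${\rm ev}_{X^Y}\circ(t^Y\times 1)=t\circ{\rm ev}$, hence ${\rm ev}_{X^Y}\circ(s^Y\times 1)\simeq{\rm ev}_{X^Y}\circ(t^Y\times 1)$ as maps $(PX)^Y\times Y\to X$.

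For $(1)\Rightarrow(2)$: if $X^Y$ is a homotopy exponential, the homotopy just displayed identifies $s^Y$ and $t^Y$ as transposes of the same map, so $s^Y\simeq t^Y\colon (PX)^Y\to X^Y$; realising a witnessing homotopy through the path object $P(X^Y)$, by composing it with the map $QX^Y\to P(X^Y)$ over $X^Y\times X^Y$ supplied by \refcoro{onpathobj}, produces $e\colon (PX)^Y\to P(X^Y)$ with $se=s^Y$ and $te=t^Y$, i.e.\ $(s,t)e=(s^Y,t^Y)$. For $(2)\Rightarrow(1)$: suppose $e$ exists and $H,H'\colon A\to X^Y$ satisfy ${\rm ev}\circ(H\times 1)\simeq h\simeq{\rm ev}\circ(H'\times 1)$ for some $h\colon A\times Y\to X$. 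Apply part~(ii) of \refprop{expnniceifPi} with $T=A$: the pair consisting of $(H,H')\in{\rm Ho}(\ct C)(A,X^Y\times X^Y)$ and $rh\in{\rm Ho}(\ct C)(A\times Y,PX)$ lies in the inscribed pullback, because $(s,t)\circ rh=(h,h)$ is homotopic to $({\rm ev}(H\times 1),{\rm ev}(H'\times 1))$; since $(PX)^Y$ is at least a weak homotopy exponential, the comparison map to that pullback is epi, so there is $\ell\colon A\to(PX)^Y$ with $(s^Y,t^Y)\circ\ell\simeq(H,H')$. Then $e\ell\colon A\to P(X^Y)$ witnesses $s^Y\ell\simeq t^Y\ell$, and together with $s(e\ell)=s^Y\ell\simeq H$ and $t(e\ell)=t^Y\ell\simeq H'$ this gives $H\simeq H'$.

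For the final statement, assume moreover that $(PX)^Y$ is a homotopy exponential. Define the canonical map $c\colon P(X^Y)\to(PX)^Y$ by transposing, through $(PX)^Y$, a homotopy between ${\rm ev}_{X^Y}\circ(s\times 1)$ and ${\rm ev}_{X^Y}\circ(t\times 1)\colon P(X^Y)\times Y\to X$ (such a homotopy exists since $s\simeq t\colon P(X^Y)\to X^Y$; concretely, apply $P(-)$ to ${\rm ev}_{X^Y}$ as in \refcoro{onpathobj}). I would then show that $c$ and the $e$ from the equivalence are mutually inverse homotopy equivalences. The key sub-facts are: (a)~if $\rho\colon X^Y\to(PX)^Y$ is a transpose of $r\circ{\rm ev}_{X^Y}$, so ${\rm ev}\circ(\rho\times 1)\simeq r\circ{\rm ev}_{X^Y}$, then using $rs\simeq 1_{PX}$ and homotopy-uniqueness of transposes into $(PX)^Y$ one checks that a strictification $\bar\rho$ of $\rho$ with $s^Y\bar\rho=1$ (obtained via \refprop{fillersuptohomotopy}, as $s^Y$ is a fibration) is a homotopy inverse of $s^Y$; hence $\rho$ is a weak equivalence; (b)~$e\rho$ and $r$ are both sections of the acyclic fibration $s\colon P(X^Y)\to X^Y$, and any two sections of an acyclic fibration are homotopic (a section $j$ is a homotopy inverse, so $j=(js)r\simeq r$), so $e\rho\simeq r$; (c)~$cr\simeq\rho$, again by homotopy-uniqueness of transposes into $(PX)^Y$, since both $cr$ and $\rho$ transpose $r\circ{\rm ev}_{X^Y}$ up to homotopy. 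Combining these, $ecr\simeq e\rho\simeq r$ and $ce\rho\simeq cr\simeq\rho$; since $r$ and $\rho$ are weak equivalences, the cancellation principle from the proof of \reflemm{exofweakfillers} (if $lw\simeq l'w$ and $w$ is a weak equivalence, then $l\simeq l'$) gives $ec\simeq 1_{P(X^Y)}$ and $ce\simeq 1_{(PX)^Y}$, so $c$ is a homotopy equivalence.

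I expect the main obstacle to be this last statement, and within it point~(a): squeezing out of the hypothesis ``$(PX)^Y$ is a homotopy exponential'' the fact that the reflexivity term $\rho$ is a genuine weak equivalence, and bookkeeping of which identities hold strictly versus only up to (fibrewise) homotopy, so that the strictifications via \refprop{fillersuptohomotopy} do not interfere with the cancellation arguments. One also has to fix compatible choices of the evaluation maps for $X^Y\times X^Y=(X\times X)^Y$ and for $(PX)^Y$ (the latter from \refprop{expnniceifPi}) so that part~(i) of that proposition holds on the nose rather than merely up to homotopy.
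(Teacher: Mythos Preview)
Your argument for $(1)\Leftrightarrow(2)$ is correct and matches the paper's proof closely; the only cosmetic difference is that the paper takes for $K$ an actual homotopy ${\rm ev}(H_1\times Y)\simeq{\rm ev}(H_2\times Y)$ (so that $(s,t)K=({\rm ev}(H_1\times Y),{\rm ev}(H_2\times Y))$ on the nose) and then gets $(s^Y,t^Y)L=(H_1,H_2)$ strictly, whereas you use $rh$ and work up to homotopy throughout. Both are fine.

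For the final statement your route is correct but considerably more laborious than the paper's. You construct an explicit homotopy inverse $e$ to the canonical map $c$ and verify $ec\simeq 1$, $ce\simeq 1$ via the auxiliary facts (a)--(c) and a cancellation argument. The paper instead argues at the level of the homotopy category: the canonical map $c$ is obtained as a filler making the lower triangle in
\[
\xymatrix{ X^Y \ar[r]^{r^Y} \ar[d]_r & (PX)^Y \ar[d]^{(s^Y, t^Y)} \\
P(X^Y) \ar[r]_{(s,t)} \ar@{..>}[ur]_c & X^Y \times X^Y }
\]
commute strictly and the upper triangle commute up to homotopy. Since $r$ is a weak equivalence and $c\circ r\simeq r^Y$, it suffices by 2-out-of-3 to show $r^Y$ is a homotopy equivalence. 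But $r\colon X\to PX$ is an isomorphism in ${\rm Ho}(\ct C)$, and if both $X^Y$ and $(PX)^Y$ are \emph{homotopy} exponentials they are genuine exponentials in ${\rm Ho}(\ct C)$; hence $r^Y$ is an isomorphism there, i.e.\ a homotopy equivalence. This one-line argument replaces your steps (a)--(c) entirely and avoids the strictification bookkeeping you flagged as the main obstacle. Your approach does buy something extra, namely an explicit identification of the homotopy inverse of $c$ as $e$, but this is not needed for the statement.
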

\begin{proof} (1) $\Rightarrow$ (2): The diagrams
\begin{displaymath}
\begin{array}{cc}
\xymatrix{ (PX)^Y \times Y \ar[r]^(.6){\rm ev} \ar[d]_{s^Y \times Y} & PX \ar[d]^s \\
X^Y \times Y \ar[r]_(.6){\rm ev} & X } &
\xymatrix{ (PX)^Y \times Y \ar[r]^(.6){\rm ev} \ar[d]_{t^Y \times Y} & PX \ar[d]^t \\
X^Y \times Y \ar[r]_(.6){\rm ev} & X }
\end{array}
\end{displaymath}
commute, while $s\simeq t$ and $s \circ {\rm ev} \simeq t \circ {\rm ev}$. So if $X^Y$ is a homotopy exponential, the maps $s^Y$ and $t^Y$ must be homotopic. Therefore there exists a map $e: (PX)^Y \to P(X^Y)$ such that $s e = s^Y$ and $t  e = t^Y$.

(2) $\Rightarrow$ (1):  Suppose that there is a map $h: A \times Y \to X$ together with morphisms $H_1, H_2: A \to X^Y$ such that ${\rm ev}(H_1 \times Y) \simeq h \simeq {\rm ev}(H_2 \times Y)$. The latter means that there is a map $K: A \times Y \to PX$ such that
\[ (s, t)K = ({\rm ev}(H_1 \times Y), {\rm ev}(H_2 \times Y)). \]
Since
\diag{ {\rm Ho}(\ct{C})(A, (PX)^Y) \ar[d] \ar[r] & {\rm Ho}(\ct{C})(A \times Y, PX) \ar[d] \\
{\rm Ho}(\ct{C})(A, X^Y \times X^Y) \ar[r] & {\rm Ho}(\ct{C})(A \times Y, X \times X)}
is a quasi-pullback, there is a map $L: A \to (PX)^Y$ such that $(s^Y, t^Y) L = (H_1,H_2)$ and ${\rm ev} \circ (L \times Y) \simeq K$. So if there is a map $e: (PX)^Y \to P(X^Y)$ such that $(s, t)e = (s^Y, t^Y)$, then for $M := eL$ we have $(s, t)M = (H_1, H_2)$, showing that $H_1$ and $H_2$ are homotopic.

Finally, note that there is always a morphism $P(X^Y) \to (PX)^Y$ making the lower triangle in
\diag{ X^Y \ar[r]^{r^Y} \ar[d]_r & (PX)^Y \ar[d]^{(s^Y, t^Y)} \\
P(X^Y) \ar[r]_{(s,t)} \ar@{..>}[ur] & X^Y \times X^Y }
commute, whilst making the upper triangle commute up to homotopy. We claim that if both $X^Y$ and $(PX)^Y$ are homotopy exponentials, then this diagonal arrow is a homotopy equivalence. For this it suffices to prove that the arrow along the top is a homotopy equivalence, because the arrow on the left is and the upper triangle commutes up to homotopy. To see that $r^Y$ is a homotopy equivalence, observe that $r: X \to PX$ is a homotopy equivalence and therefore an isomorphism in the homotopy category. So if $X^Y$ and $(PX)^Y$ are exponentials in the homotopy category, then $r^Y$ is an isomorphism in the homotopy category, that is, a homotopy equivalence.
\end{proof}

\begin{rema}{functionextensionality} What the preceding proposition shows is that ordinary homotopy exponentials are those weak homotopy exponentials that satisfy what type-theorists call function extensionality (indeed, in the syntactic category the morphism $e$ would be a proof term for the type-theoretic translation of the statement that two functions $f, g: Y \to X$ are equal if $f(y)$ and $g(y)$ are equal for every $y \in Y$). This principle is not valid in the syntactic category associated to type theory, and for this reason the homotopy exponentials in the syntactic category are only weak. The same applies to the homotopy $\Pi$-types that we have defined: the syntactic category only has these in the weak form.
\end{rema}

\subsection{Homotopy exponentials and homotopy exact completion} The main goal of this section is to show that ${\rm Hex}(\ct{C})$ is locally cartesian closed, whenever \ct{C} has weak homotopy $\Pi$-types. We will only outline the constructions here, as a detailed verification that they indeed have the required properties is both straightforward and cumbersome.

\begin{prop}{hexpunderhex}
If \ct{C} has weak homotopy $\Pi$-types, then ${\rm Ex}(\ct{C})$ has homotopy exponentials and ${\rm Hex}(\ct{C})$ has ordinary exponentials.
\end{prop}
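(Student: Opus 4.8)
The plan is to reduce everything to the construction of \emph{homotopy} exponentials in the path category ${\rm Ex}(\ct{C})$. Since finite products in any path category are preserved by the quotient functor to its homotopy category, a homotopy exponential in ${\rm Ex}(\ct{C})$ in the sense of \refdefi{hexponential} automatically descends to an honest exponential object in ${\rm Ho}({\rm Ex}(\ct{C})) \simeq {\rm Hex}(\ct{C})$: the comparison map between the relevant hom-sets of ${\rm Hex}(\ct{C})$ is surjective by the weak universal property, injective by uniqueness up to homotopy, and natural by a routine check. So we would fix objects $(X, \rho\colon R \to X \times X)$ and $(Y, \sigma\colon S \to Y \times Y)$ of ${\rm Ex}(\ct{C})$ and build $(Y,S)^{(X,R)}$, imitating the classical construction of exponentials in the exact completion of a weakly lex category from \cite{carbonirosolini00}: the underlying object should be an ``object of $R$-to-$S$ equivariant maps'', and the relation on it should be pointwise $S$-equivalence.

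Concretely, since \ct{C} has weak homotopy $\Pi$-types it has, by \refprop{hPiimpliesslicesexp}, a weak homotopy exponential $Y^X$ with evaluation ${\rm ev}\colon Y^X \times X \to Y$. The plan is first to pull $\sigma$ back along $Y^X \times R \to Y \times Y$, $(f,r) \mapsto ({\rm ev}(f,\rho_1 r), {\rm ev}(f,\rho_2 r))$, and then to apply $\Pi$ along the projection $Y^X \times R \to Y^X$, obtaining a fibration $F \to Y^X$ whose fibre over $f$ consists, heuristically, of the trackings of $f$; write ${\rm ev}'\colon F \times X \to Y$ for the evident composite. Next we would pull $\sigma$ back along $F \times F \times X \to Y \times Y$, $((f,\varphi),(g,\psi),x) \mapsto ({\rm ev}'((f,\varphi),x), {\rm ev}'((g,\psi),x))$, and apply $\Pi$ along $F \times F \times X \to F \times F$, getting a fibration $E_0 \to F \times F$ encoding ``${\rm ev}(f,x) \mathrel{S} {\rm ev}(g,x)$ for all $x$''. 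Using the reflexivity, symmetry and transitivity of $S$ pointwise — together with the weak universal property of the $\Pi$-types — one equips $E_0$ with the corresponding structure up to fibrewise homotopy, and then, exactly as in the proof of \refprop{thirddescrofhex}, replaces $E_0 \to F \times F$ by a homotopy equivalent fibration to obtain a genuine homotopy equivalence relation $E \to F \times F$. The candidate exponential is $(F,E)$, with evaluation ${\rm ev}'\colon (F,E)\times(X,R) \to (Y,S)$; its tracking for $\pi_1^*E \cap \pi_2^*R$ is obtained by composing inside $S$ the $S$-path coming from $E$ with the tracking datum packaged into $F$, using transitivity of $S$.

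To check the universal property we would start from a morphism $h\colon (T,U)\times(X,R) \to (Y,S)$ of ${\rm Ex}(\ct{C})$, i.e.\ a map $h\colon T\times X \to Y$ equipped with a tracking $\pi_1^*U \cap \pi_2^*R \to S$. The weak universal property of $Y^X$ gives $n\colon T \to Y^X$ with ${\rm ev}(n\times 1)\simeq h$ in \ct{C}; composing this homotopy with the canonical map $PY \to S$ over $Y\times Y$ from \reflemm{PXleastheqrel} turns it into a map $T\times X \to S$ over $({\rm ev}(n\times 1),h)$. Combining this with the $R$- and $U$-directions of the tracking of $h$ and with transitivity of $S$ supplies precisely the data needed to lift $n$, through the $\Pi$-type universal property, to a map $\bar n\colon T \to F$ together with a tracking $U \to E$, and one then verifies ${\rm ev}'(\bar n\times 1)\sim h$. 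Uniqueness up to $\sim$ follows because if $\bar n_1,\bar n_2\colon T \to F$ both track $h$ up to $\sim$, then ${\rm ev}'(\bar n_1\times 1) \mathrel{S} h \mathrel{S} {\rm ev}'(\bar n_2\times 1)$ pointwise, which is exactly a witness that $\bar n_1 \mathrel{E} \bar n_2$. I expect the routine-but-lengthy part to be the verification that $F$, $E_0$ and $E$ carry the claimed fibres and structure (the usual bookkeeping for the $\Pi$-type arguments of this section), and the genuinely delicate step to be the strictification — upgrading the pointwise reflexivity, symmetry and transitivity of $E_0$ to an honest homotopy equivalence relation $E$ — for which we would lean on the lifting properties of Section 2 exactly as in \refprop{thirddescrofhex}.
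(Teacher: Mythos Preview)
Your construction coincides with the paper's: the carrier $F$ (the paper's $W$) is the object of tracked maps sitting over the weak exponential, and the relation $E$ (the paper's $Q$) is pointwise equivalence, both realised via weak $\Pi$-types. The only difference is packaging --- the paper obtains both as pullbacks of the fibrations $S^R \to (Y\times Y)^R$ and $S^X \to Y^X \times Y^X$ supplied by \refprop{expnniceifPi}, so that $Q$ is already a homotopy equivalence relation (structure maps coming from those of $S$ via property (ii) there and \refcoro{Piandsections}, strictified by \refprop{fillersuptohomotopy}); the replacement argument of \refprop{thirddescrofhex} you flag as the delicate step is therefore not needed.
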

\begin{proof}
Assume \ct{C} has weak homotopy $\Pi$-types, and let $(X, R)$ and $(Y, S)$ be two objects in ${\rm Hex}(\ct{C})$; our goal is to construct the exponential $(X, R)^{(Y, S)}$.

The idea is to take $(W, Q)$ where $W$ is the pullback:
\diag{ W \ar[rr] \ar[d]_p & & R^S \ar[d] \\
X^Y \ar[r]_(.3){\delta} & (X \times X)^{Y \times Y} \ar[r] & (X \times X)^S. }
Here $\delta$ is a map making
\diag{ X^Y \times Y \times Y \ar[rr]^(.4){\delta \times 1} \ar[drr]_{\epsilon} & & (X \times X)^{Y \times Y} \times (Y \times Y) \ar[d]^{\rm ev} \\
& & X \times X }
commute up to homotopy with $\epsilon = ({\rm ev}(p_1,p_2),{\rm ev}(p_1,p_3))$, while the map $R^S \to (X \times X)^S$ has the properties from \refprop{expnniceifPi}; in particular it is a fibration and the pullback $W$ does indeed exist. The object $Q$ is obtained as the pullback
\diag{ Q \ar[r] \ar[d] & R^Y \ar[d] \\
W \times W \ar[r]_(.3){p \times p} & X^Y \times X^Y \cong (X \times X)^Y,}
where we have used that $X^Y \times X^Y$ acts as a suitable weak homotopy exponential $(X \times X)^Y$. In addition, the map on the right is built in accordance with  \refprop{expnniceifPi}; this means in particular that it is a homotopy equivalence relation and therefore the same is true for $Q \to W \times W$. We leave it to the reader to verify that $(W, Q)$ is indeed an exponential in ${\rm Hex}(\ct{C})$ and a homotopy exponential in ${\rm Ex}(\ct{C})$.
\end{proof}

In fact, we can even prove that ${\rm Hex}(\ct{C})$ is locally cartesian closed whenever the path category \ct{C} has weak homotopy $\Pi$-types. Recall that a category with finite limits is locally cartesian closed if every slice is cartesian closed. For this one sometimes only needs to verify that slices over certain objects are cartesian closed. For instance, if \ct{E} is exact and $I$ is an object in \ct{E} fitting into a coequalizer diagram
\diag{ Q \ar@<1ex>[r] \ar@<-1ex>[r]  & P \ar[r] & I, }
where $Q \to P \times P$ is a pseudo-equivalence relation, then an exponential $(X \to I)^{(Y \to I)}$ in $\ct{E}/I$ may be computed from two exponentials in $\ct{E}/Q$ and $\ct{E}/P$ by taking the coequalizer of the two parallel arrows along the top in the diagram below:
\diag{ \big( (X \times_I Q)^{(Y \times_I Q)} \big)_Q \ar@<1ex>[r] \ar@<-1ex>[r] \ar[d] & \big( (X \times_I P)^{(Y \times_I P)} \big)_P \ar[d] \\
Q \ar@<1ex>[r] \ar@<-1ex>[r]  & P.}
(This is called the \emph{method of descent}, for which exactness of \ct{E} is crucial.)

\begin{theo}{hpiunderhex}
Let \ct{C} be a path category. If \ct{C} has weak homotopy $\Pi$-types, then ${\rm Hex}(\ct{C})$ is locally cartesian closed.
\end{theo}
\begin{proof} We need to prove that each slice category ${\rm Hex}(\ct{C})/I$ has exponentials. For this it suffices to consider the case where $I = iZ$: any object in ${\rm Hex}(\ct{C})$ is covered by such an object (see \refprop{iiscovering}), so the general case follows by descent.

We have proved in \reftheo{equivwithstableelements} that ${\rm Hex}(\ct{C})/iZ$ is equivalent to the full subcategory of ${\rm Hex}(\ct{C}(Z))$ on the stable objects. It follows from \refprop{hPiimpliesslicesexp} and \refprop{hexpunderhex} that ${\rm Hex}(\ct{C}(Z))$ has exponentials, so it suffices to prove that if we take an exponential of two stable objects in this category, then the result is again stable.

So let $(f: X \to Z, \rho: R \to X \times_Z X)$ and $(g: Y \to Z, \sigma: S \to Y \times_Z Y)$ be two stable objects in ${\rm Hex}(\ct{C}(Z))$. These will have two (essentially unique) transport structures $\Gamma_X: X \times_Z PZ \to X$ and $\Gamma_Y: Y \times_Z PZ \to Y$; recall that stability means that $\Gamma(x, \alpha) \simeq_R x$ and $\Gamma(y, \alpha) \simeq_S y$ whenever $\alpha$ is a loop in $Z$.

So let $(h: W \to Z, q: Q \to W \times_Z W)$ be the result of computing the exponential $(X, R)^{(Y, S)}$ over $(Z, PZ)$ as in the previous proposition. This object has a transport structure as well, which is probably best described in words. What this action should do is to associate to every $w \in W$ living over $z \in Z$ and path $\alpha$ from $z$ to $z'$ a new element $w' \in W$ over $z'$. Such a $w'$ is intuitively a function, so let $y' \in Y$ be an element over $z'$. We can transport $y'$ back along the inverse of $\alpha$ to an element $y$ over $z$; to this $y$ we can apply $w$ and obtain an element $x$ over $z$. Using transport again, but now on $x \in X$ and $\alpha$ we find an element $x' \in X$ over $z'$. The idea is to set $w'$ to be the function sending $y'$ to $x'$.  \refprop{TtransppresTequivalence} implies that $w'$ will be tracked whenever $w$ is.

If $\alpha$ is a loop, then $y'$ would be $S$-equivalent to $y$ and $x$ would be $T$-equivalent to $x'$. This means that $w$ and $w'$ would be $Q$-equivalent, showing that $(W, Q)$ is stable, as desired.
\end{proof}

\begin{rema}{onexcomplofcatwweakfinitelimits} One could also have derived \reftheo{hpiunderhex} from \refprop{conntoordintheory} above and the results in \cite{carbonirosolini00}. We have included a direct proof of \reftheo{hpiunderhex} here, because it provides a description of the exponentials in slices of ${\rm Hex}(\ct{C})$  which only makes sense in the specific context of exact completions of path categories and would not work in the more general context of exact completions of categories with weak finite limits.

In addition, these constructions can also be used to show that ${\rm Ex}(\ct{C})$ has homotopy $\Pi$-types whenever \ct{C} has weak homotopy $\Pi$-types. In view of \refrema{functionextensionality}  this means that ${\rm Ex}(\ct{C})$ satisfies a form of function extensionality even when \ct{C} does not.
\end{rema}

\bibliographystyle{plain} \bibliography{hSetoids}

\end{document}